\numberwithin{equation}{section}
\newcommand{\weakconv}{\xrightarrow{w}}
\newcommand{\WZ}{W_{\mathbb{Z}}}
\newcommand{\W}{W}
\newcommand{\ctilg}{{\sum_{k \in \bZ} c_k g(\cdot-k)}}
\newcommand{\ip}[2]{\ensuremath{\left<#1,#2\right>}}
\newcommand{\otherset}{\Omega}
\newcommand{\sisp}{V}
\newcommand{\tp}{totally positive}
\newcommand{\ltwo}{L^2(\bR )}
\newtheorem*{lemma*}{Lemma}
\DeclareMathOperator*{\supp}{supp}
\DeclareMathOperator*{\essinf}{ess\,inf\,}
\newcommand{\field}[1]{\mathbb{#1}}
\newcommand{\bR}{\field{R}}
\newcommand{\bN}{\field{N}}
\newcommand{\bZ}{\field{Z}}
\newcommand{\bC}{\field{C}}
\newcommand{\bT}{\field{T}}
 \def\cG{\mathcal{G}}
\newcommand{\tfs}{time-frequency shift}
\newcommand{\sis}{shift-invariant space}
\newcommand{\fif}{if and only if}
\def\inv{^{-1}}
\newcommand{\abs}[1]{\lvert#1\rvert}
\newcommand{\norm}[1]{\lVert#1\rVert}
\newcommand{\vs}{\vspace{3 mm}}
\newtheorem{tm}{Theorem}[section]
\newtheorem{lemma}[tm]{Lemma}
\newtheorem{prop}[tm]{Proposition}
\newtheorem{cor}[tm]{Corollary}
\newcommand{\no}{{n_\Lambda}}
\newtheorem{remark}[tm]{Remark}
\begin{document}
\begin{abstract}
We study nonuniform sampling in shift-invariant spaces and the construction of Gabor
frames with respect to the  class of totally positive functions
whose Fourier transform factors as $ \hat g(\xi)= \prod_{j=1}^n (1+2\pi i\delta_j\xi)^{-1} \, e^{-c \xi^2}$ for
$\delta_1,\ldots,\delta_n\in\bR, c >0$ (in which case $g$ is called totally positive of Gaussian type).

In analogy to Beurling's sampling theorem for the Paley-Wiener space of entire functions, we prove that every
separated set with
lower Beurling density  $>1$ is a sampling set for the \sis\ generated by such a $g$.
In view of the known necessary density conditions, this result is optimal and
validates the  heuristic reasonings in the engineering literature.

Using a subtle  connection between sampling in shift-invariant spaces
and the theory of Gabor frames, we show that the set of phase-space
shifts of $g$ with respect to a rectangular lattice $\alpha \bZ \times
\beta \bZ $ forms a frame, if and only if $\alpha \beta <1$. This solves an open problem going back
to Daubechies in 1990 for the class of totally positive functions of
Gaussian type.

The proof strategy involves the connection between sampling in
shift-invariant spaces and Gabor frames, a new characterization of sampling sets ``without inequalities'' in the
style of Beurling, new properties of totally positive functions,
and the interplay between zero sets of functions in a shift-invariant
space and functions in the Bargmann-Fock space.
\end{abstract}

\title[Sampling Theorems for  Shift-invariant Spaces]
{Sampling Theorems for  Shift-invariant Spaces,  Gabor Frames,
  and Totally Positive Functions}
\author{Karlheinz Gr\"ochenig}
\email{karlheinz.groechenig@univie.ac.at}
\address{Faculty of Mathematics \\
University of Vienna \\
Oskar-Morgenstern-Platz 1 \\
A-1090 Vienna, Austria}

\author{Jos\'e Luis Romero}
\email{jose.luis.romero@univie.ac.at}
\email{jlromero@kfs.oeaw.ac.at}
\address{Faculty of Mathematics \\
University of Vienna \\
Oskar-Morgenstern-Platz 1 \\
A-1090 Vienna, Austria}
\address{Acoustics Research Institute, Austrian Academy of Sciences,
Wohllebengasse 12-14 A-1040, Vienna, Austria}

\author{Joachim St\"ockler}
\email{joachim.stoeckler@math.tu-dortmund.de}

\address{Faculty of Mathematics \\
TU Dortmund \\
Vogelpothsweg 87 \\
D-44221 Dortmund, Germany }

\subjclass[2000]{42C15,42C40,94A20}
\date{}
\keywords{Gabor frame, totally positive function, Janssen-Ron-Shen duality,
  spectral invariance, Beurling density, shift-invariant space, Gaussian}
\thanks{K.\ G.\ was
  supported in part by the  project P26273 - N25  of the
Austrian Science Fund (FWF). J.~L.\ R.\
gratefully acknowledges support from the Austrian Science Fund (FWF):
P 29462 - N35.}
\maketitle

\section{Introduction}

\subsection{Nonuniform sampling}

Beurling's sampling theory for the Paley-Wiener space is at the
crossroad of complex analysis and signal processing and has  served as a model and inspiration for many
generations of sampling theorems in both fields. To recapitulate
Beurling's results,
 let
$$
\mathrm{PW}^2 = \{ f\in \ltwo : \supp \, \hat f \subseteq [-1/2,1/2]\}
$$
be the Paley-Wiener space, where $\hat f(\xi) := \int_\bR f(x) e^{-2 \pi i x \xi} dx$ is the Fourier transform.
It consists of restrictions  of entire functions of
exponential type to the real line and is called  the space of bandlimited functions by
engineers and applied mathematicians. This space  was and still is the
prevailing signal model for phenomena as diverse as radio signals,
temperatures and biomedical measurements \cite{unser00}.

A separated set
$\Lambda \subseteq \bR $ is called a
sampling set for $\mathrm{PW}^2$, if there exist $A,B>0$ such that
\begin{equation}
  \label{eq:ch2}
A \|f\|_2^2 \le \sum_{\lambda\in\Lambda} |f(\lambda)|^2 \le B\|f\|_2^2
\qquad \text{ for all } f\in \mathrm{PW}^2 \, .
\end{equation}
The understanding of sampling sets may be considered a core problem in
complex analysis, but is also a  fundamental problem in signal
processing. Since a signal is always sampled and then processed, sampling is
a key operation in the analog-digital conversion. The sampling
inequality
\eqref{eq:ch2} guarantees that the samples  carry
the complete information about the signal and that the signal can be
recovered from the samples with some stability with respect to
measurement and processing errors.

Beurling ~\cite{be66,be89} characterized sampling sets by means of their density. The (lower) Beurling density is the average density of
samples per unit and is formally defined as
\begin{equation}
  \label{eq:ch3}
     D^-(\Lambda) := \mathop{{\rm liminf}}_{R\to\infty}\inf_{x\in\bR}
  \frac{\# (\Lambda\cap [x-R,x+R])}{2R}.
\end{equation}
The theorems of Beurling give an almost complete
characterization of sampling sets for the Paley-Wiener space
$\mathrm{PW}^2$.
\begin{tm}
\label{th_beur}
(a) If  $\Lambda \subseteq \bR $ is separated and $D^-(\Lambda ) >
1$, then $\Lambda $ is a sampling set  for $\mathrm{PW} ^2$.

(b) Conversely,  if $\Lambda $ is a sampling set for $\mathrm{PW}
^2$, then $D^-(\Lambda
) \geq 1$.
\end{tm}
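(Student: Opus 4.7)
The plan for part (a) follows Beurling's \emph{weak-limit} strategy, exploiting that via the Paley-Wiener theorem $\mathrm{PW}^2$ consists of entire functions of exponential type $\pi$. The upper bound $\sum_{\lambda \in \Lambda} |f(\lambda)|^2 \le B\|f\|_2^2$ is standard for separated $\Lambda$ (Plancherel-Polya) and reduces to the continuous embedding $\mathrm{PW}^2 \hookrightarrow L^\infty(\bR)$ together with local $L^2$-to-$L^\infty$ control on intervals each meeting $\Lambda$ in at most one point.

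For the lower bound I would argue by contradiction. If $\Lambda$ is not sampling, there exist $f_n \in \mathrm{PW}^2$ with $\|f_n\|_2 = 1$ and $\sum_{\lambda \in \Lambda} |f_n(\lambda)|^2 \to 0$. A homogeneous approximation argument produces centers $x_n \in \bR$ such that the translates $g_n(x) := f_n(x+x_n)$ retain a fixed positive fraction of their $L^2$-mass on a fixed compact set. Since the $g_n$ form a normal family of entire functions of exponential type $\pi$, a subsequence converges locally uniformly to a nonzero $g \in \mathrm{PW}^2$; simultaneously, a diagonal subsequence of the shifted sets $\Lambda_n := \Lambda - x_n$ converges weakly to a set $\otherset \subseteq \bR$ that inherits the density bound $D^-(\otherset) \ge D^-(\Lambda) > 1$. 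The vanishing hypothesis then forces $g(\omega) = 0$ for every $\omega \in \otherset$, contradicting the classical rigidity statement that an entire function of exponential type $\pi$ cannot vanish on a set of lower density strictly greater than $1$ (a Jensen-formula consequence, sharpened by comparison with $\sin(\pi z)$).

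For part (b) I would combine the sampling inequality with a direct counting argument. Writing $K_x(y) = \mathrm{sinc}(y-x)$ for the reproducing kernel of $\mathrm{PW}^2$, the sampling inequality applied to $K_x$ gives $\sum_{\lambda \in \Lambda} |\mathrm{sinc}(\lambda - x)|^2 \ge A$ uniformly in $x \in \bR$. Averaging this lower bound over $x \in [y-R, y+R]$ and invoking the $L^2$-orthogonality of the translates $\{\mathrm{sinc}(\cdot - n)\}_{n \in \bZ}$ as a comparison, one extracts the asymptotic bound $\#(\Lambda \cap [y-R, y+R]) \ge 2R\,(1 - o(1))$, uniform in $y$; passing to $\liminf_{R \to \infty} \inf_y$ yields $D^-(\Lambda) \ge 1$.

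The main obstacle is the weak-limit step in (a): one has to track three limits at once --- the locally uniform limit $g$ of the $g_n$, the weak set limit $\otherset$ of the $\Lambda_n$, and the passage from $\sum_{\lambda} |g_n(\lambda - x_n)|^2 \to 0$ to the vanishing of $g$ on $\otherset$ --- and the strict inequality $D^-(\Lambda) > 1$ is precisely what is consumed to reach the contradiction via the uniqueness theorem for entire functions. Outside this core step, the arguments are quantitative but routine.
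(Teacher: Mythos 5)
The paper itself gives no proof of Theorem~\ref{th_beur}: it is Beurling's classical theorem, quoted from \cite{be66,be89}, with the necessity direction credited to \cite{landau67,AG00,bacahela06}. So the comparison can only be with the classical arguments and with the paper's Section~\ref{sec_wl}, which is the abstract descendant of Beurling's method. Your outline for (a) has the right architecture (weak limits of translates plus a uniqueness theorem for entire functions), but it breaks precisely at the step you label routine. With the normalization $\|f_n\|_2=1$ there is in general \emph{no} choice of centers $x_n$ for which the translates retain a fixed fraction of $L^2$-mass on a fixed compact set: take $f_n=n^{-1/2}\sum_{j=1}^{n}\mathrm{sinc}(\cdot-a_j)$ with widely separated $a_j$, so that $\|f_n\|_2\asymp 1$ while $\|f_n\|_\infty\asymp n^{-1/2}\to 0$ and every sequence of translates converges locally uniformly to $0$; your limit function $g$ is then zero and no contradiction is reached. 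Renormalizing by $\|f_n\|_\infty$ restores a nonzero limit but destroys the hypothesis, since $\sum_\lambda |f_n(\lambda)|^2/\|f_n\|_\infty^2$ need no longer tend to $0$. Bridging this $L^2$-versus-$L^\infty$ mismatch is the genuinely hard content of Beurling's sufficiency theorem: classically one exploits the bandwidth parameter (multiply by a kernel of small exponential type to localize, enlarging the band slightly, which the open condition $D^-(\Lambda)>1$ absorbs), and Theorem~\ref{thm:linfinity} of the paper is exactly the replacement for that trick in settings where no bandwidth parameter exists. The rest of your (a) is fine: the three-limit bookkeeping is indeed routine, $D^-(\Omega)\ge D^-(\Lambda)$ for weak limits of separated sets is Lemma~\ref{lemma_dens_lim}, and the uniqueness statement does follow from Jensen's formula, because $|g(x+iy)|\le Ce^{\pi|y|}$ gives $\int_0^r N(t)\,t^{-1}dt\le 2r+O(1)$, while real zeros of lower Beurling density $\nu>1$ force $\int_0^r N(t)\,t^{-1}dt\ge 2\nu r-O(1)$.

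Part (b) as written also falls short of the stated conclusion. Applying the sampling inequality to the reproducing kernels and averaging over $x$ is a trace computation; after controlling the tails it yields $\#(\Lambda\cap[y-R,y+R])\gtrsim 2AR-o(R)$, i.e.\ $D^-(\Lambda)\ge A$ with $A$ the lower sampling constant, not $D^-(\Lambda)\ge 1$. To get the sharp constant one must count eigenvalues of the concentration operator rather than its trace (Landau's method) or run the Ramanathan--Steger dimension comparison; your appeal to ``orthogonality of $\{\mathrm{sinc}(\cdot-n)\}$ as a comparison'' gestures at the latter, but the $O(|x|^{-1})$ decay of $\mathrm{sinc}$ makes the naive truncation of the far samples fail in operator norm, which is why this direction is a theorem of Landau and not a routine averaging argument. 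In short: both halves of your proposal identify the correct classical strategies, but in each case the quantitative step you treat as standard is where the actual theorem lives.
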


A related result with uniform densities occurs in the work of Duffin
and Schaeffer \cite{ds52}.   The case of critical density $D^-(\Lambda
)= 1$
is delicate and was settled in
\cite{orse02}.
Subsequently,
the necessary condition (b) above has been extended and adapted to
many different situations, notably to Paley-Wiener spaces in higher
dimension with respect to general spectra by Landau~\cite{landau67},
and to sampling in spaces of analytic functions~\cite{seip03}.
The necessary conditions for sampling can even be formulated and
proved for sampling  in  reproducing kernel Hilbert
spaces over a metric measure space~\cite{bacahela06,FGHKR}.

The sufficient condition (a) is much deeper and more subtle   and has been extended
only to a handful of  situations, notably to sampling in the Bargmann-Fock
space of entire functions by Lyubarskii and Seip
~\cite{lyub92,seip92}. For bandlimited functions with general spectra,
even
the  existence of sets that are both sampling and interpolating  is a
very challenging problem \cite{lev12, koni15}.

In this article we  develop an analog of  Beurling's theory in an unexpected
direction beyond classical complex analysis and study sampling in
shift-invariant spaces. This notion comes  from approximation theory
and  replaces the Paley-Wiener space in many modern applications
because   often  data acquisition demands a more flexible
setting, where
signals are not strictly bandlimited, but only approximately so.  For
the definition, let  $g\in \ltwo $ be a
generating function  and consider the space
\begin{align*}
\sisp^2(g) = \big\{ f\in \ltwo : f = \sum _{k\in \bZ } c_k g(\cdot - k), c\in
\ell ^2(\bZ )\big\}.
\end{align*}
We always assume that $g$ possesses stable translates so that $\|f\|_2
\asymp \|c\|_2$.
 Typically the  Fourier transform of $g$ decays
rapidly outside an interval and thus
functions in $V^2(g)$ are approximately bandlimited. In the extreme
case of $\hat{g} = \chi _{[-1/2,1/2]}$, the shift-invariant space  $\sisp^2(g)$
is precisely the Paley-Wiener space $\mathrm{PW}^2$.
Intuitively, functions in a general \sis\ $\sisp^2(g)$ have a
\emph{rate of innovation} of one degree per space unit \cite{unser00, AG01, blmave02}.  It is therefore expected
that nonuniform sampling theorems similar to Theorem \ref{th_beur}
 hold for general shift-invariant spaces.
Such claims are usually backed by  a heuristic comparison to the bandlimited case,
but, in spite of their central role in signal processing, they have
been given only moderate formal support so
far. In ~\cite{AG00}  we suggested that  every reasonable generator
$g$ comes with a  notion of a suitable $g$-dependent density
$D_g^-(\Lambda )$, such that Beurling's formulation
carries over verbatim. Despite many efforts and dozens of
publications this suggestion and the attached conjectures remained untouched.

The  existing results for sampling  in a \sis\ use the covering
density
$\delta = 2\sup _{x \in \bR} \inf _{\lambda \in \Lambda}
|x-\lambda| $ instead of  the Beurling density.  Typical statements
assert that every set $\Lambda $ with a  ``sufficiently small''
covering density $\delta
$   is a sampling set for $\sisp^2(g)$. However, because of the
unspecified (and usually  large)  gap between the known  necessary density
and the proven   sufficient density
these results are far from optimal and lack practical relevance.
See~\cite{AG01} for a survey of sampling in \sis s  including many qualitative results.
Sharp results in terms of the covering density  (``$\delta < 1$
 is sufficient for sampling") are known only when the generator $g$ is a
B-spline~\cite{AG00}, an exponential B-spline~\cite{kloos-stoeckler},
or a totally positive function of
finite type~\cite{GS13}.

Our objective is to develop a sampling theory in a large class of  \sis s in the style
of Beurling  and to prove the best possible results.
As a natural and suitable class of generators we deal with  totally
positive functions. A function $g \in L^1(\bR )$ is called totally
positive, if for all $N\in \bN$ and real numbers  $x_1 < x_2 < \dots <
x_N$, $y_1< y_2< \dots < y_N$ the matrix
$$
\Big( g(x_j - y_k) \Big) _{j,k= 1, \dots ,N}
$$
has non-negative determinant. This property is fundamental in
statistics, approximation theory,  and  data interpolation~\cite{Karlin1968a}.
In the following we will deal with a subset of totally positive
functions and  call a function $g$ on $\bR $ \tp\ of Gaussian type if its Fourier
transform factors as
\begin{equation}\label{eq:ch1}
  \hat g(\xi)= \prod_{j=1}^n (1+2\pi i\delta_j\xi)^{-1} \, e^{-c \xi
    ^2},\qquad \delta_1,\ldots,\delta_n\in\bR, c >0, n\in \bN \cup \{0\} \, .
\end{equation}
By Schoenberg's characterization~\cite{Schoenb1947a} of \tp\ functions, every $g$
satisfying \eqref{eq:ch1} is  \tp .

Our first main result is the following (almost) characterization of
sampling sets in $V^2(g)$ in analogy to Beurling's theorem.

\begin{tm} \label{tmch1}
Assume that  $g$ is a \tp\ function of  Gaussian type as in \eqref{eq:ch1}.  If $\Lambda
\subseteq \bR $ is separated and
$D^-(\Lambda) > 1$, then $\Lambda $ is a  sampling set for $\sisp^2(g)$.
\end{tm}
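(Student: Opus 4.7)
The plan is to prove Theorem \ref{tmch1} by a Beurling-style compactness argument, using the Gaussian factor in $\hat g$ to transfer zero-set information to the Bargmann--Fock space.

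\emph{Step 1: sampling without inequalities.} I would first recast the sampling property as a qualitative statement in the spirit of Beurling. Suppose $\Lambda$ is separated with $D^-(\Lambda)>1$ but the sampling inequality for $V^2(g)$ fails. Then there exist $f_n\in V^2(g)$ with $\|f_n\|_2=1$ and $\sum_{\lambda\in\Lambda}|f_n(\lambda)|^2\to 0$. The crucial maneuver is to translate each $f_n$ by a suitable $x_n\in\bR$ to capture a point of mass concentration, and then pass to subsequences so that the shifted sets $\Lambda-x_n$ converge weakly to a separated set $\Lambda'$ with $D^-(\Lambda')\ge D^-(\Lambda)>1$, while $f_n(\cdot+x_n)$ converges locally uniformly to a \emph{nonzero} element $f$ of the bounded analogue $V^\infty(g)=\{\sum_{k\in\bZ}c_k g(\cdot-k):c\in\ell^\infty(\bZ)\}$, satisfying $f(\lambda')=0$ for every $\lambda'\in\Lambda'$. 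This requires uniform local $\ell^\infty$-to-sample-value estimates that hold for well-localized generators and constitutes the new characterization of sampling sets ``without inequalities'' advertised in the abstract.

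\emph{Step 2: factorization and Bargmann--Fock.} Write $\hat g(\xi)=\hat g_1(\xi)\,e^{-c\xi^2}$, where $\hat g_1(\xi)=\prod_{j=1}^n(1+2\pi i\delta_j\xi)^{-1}$ corresponds to a totally positive function $g_1$ of finite type, and set $\phi(x)=(4\pi c)^{-1/2}\,e^{-x^2/(4c)}$, so that $g=g_1\ast\phi$. Then any $f\in V^\infty(g)$ admits the representation $f=F\ast\phi$ with $F\in V^\infty(g_1)$, and the map $z\mapsto(F\ast\phi)(z)$ extends to an entire function of order two that, multiplied by the Bargmann weight $e^{-|\Im z|^2/(4c)}$, is bounded on $\bC$ and thus lies in a Bargmann--Fock-type space. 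The Lyubarskii--Seip theorem then states that any set whose Beurling density exceeds the critical Fock density must be a uniqueness set for that Fock space. A careful rescaling shows that, with the integer-shift normalization of $V^2(g)$, the critical Fock density translates exactly to $D^-=1$.

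\emph{Step 3 and main obstacle.} Combining Steps 1 and 2, the zero set $\Lambda'$ forces $f\equiv0$, contradicting the nontriviality of $f$ produced in Step 1 and completing the proof. The main obstacle is Step 1: a naive weak limit of $f_n\in V^2(g)$ may vanish globally, so one must re-center and work uniformly in $V^\infty$-type spaces, and $V^\infty(g)$ carries no analytic structure of its own --- analyticity is only recovered after Step 2 through convolution with the Gaussian $\phi$. Harmonizing the weak-limit compactness argument with the factorization into Bargmann--Fock is therefore the core difficulty and is where the proof's real novelty lies; in contrast, Step 3 is essentially a well-chosen invocation of existing sharp Fock-space results.
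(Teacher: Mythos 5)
Your outline correctly identifies the two poles of the argument (a Beurling-type reduction to a qualitative statement, and the Lyubarskii--Seip circle of ideas), but both of your steps have gaps that the paper's actual proof is specifically designed to avoid. In Step 1, normalizing $\|f_n\|_2=1$ and re-centering at a point of mass concentration does not produce a nonzero weak limit in $V^\infty(g)$: if $f_n=\sum_k c^n_k g(\cdot-k)$ with $\|c^n\|_2=1$ but $\|c^n\|_\infty\to 0$ (coefficients spreading out), every translated locally uniform limit is zero. The paper's resolution is not a ``uniform local $\ell^\infty$-to-sample-value estimate'' but the equivalence (a)$\Leftrightarrow$(b) of Theorem~\ref{thm:linfinity}: failure of $\ell^2$-sampling implies failure of $\ell^\infty$-sampling, via a noncommutative Wiener lemma / spectral invariance for matrices with $W_0$ off-diagonal decay (Proposition~\ref{prop_loc}, after Sj\"ostrand and Baskakov). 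Only after passing to $\ell^\infty$-normalized coefficient sequences, with $|c^n_{k_n}|\geq 1/2$, does the recentering-plus-weak-limit argument yield a nonzero $f\in V^\infty(g)$ vanishing on some $\Gamma\in \WZ(\Lambda)$.

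Steps 2--3 fail more fundamentally. The zero set $\Lambda'$ you produce is a subset of $\bR$, hence has \emph{planar} Beurling density zero, and the Lyubarskii--Seip uniqueness/sampling theory for the Bargmann--Fock space is governed by planar density; no rescaling converts a one-dimensional density $>1$ into a Fock uniqueness statement. Concretely, $f(z)=\sin(\pi Nz)$ is entire, bounded on $\bR$, satisfies $|f(x+iy)|\leq C_N e^{y^2/(4c)}$ (the exact growth your convolution representation $f=F\ast\phi$ with $F$ bounded provides), and vanishes on $\bZ/N$, a set of one-dimensional density $N$ as large as you please. So the growth class you land in after convolving with the Gaussian admits nontrivial functions with real zero sets of arbitrarily large density, and your Step 3 cannot close. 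The paper's route uses two ingredients your proposal omits: (i) the factorization is used in the \emph{differential} direction, $\prod_{j}(I+\delta_j\tfrac{d}{dx})g=\varphi$, so that $\Phi=\prod_j(I+\delta_j\tfrac{d}{dx})f$ lies in $V^\infty(\varphi)$ and Rolle's theorem (Lemma~\ref{Rolle}, writing $I+a\tfrac{d}{dx}=ae^{-x/a}\tfrac{d}{dx}e^{x/a}$) transfers the zero density from $f$ to $\Phi$ --- your convolution factorization $g=g_1\ast\phi$ points the wrong way and gives no control on the zeros of $\sum_k c_k\varphi(\cdot-k)$; and (ii) uniqueness of $\Lambda$ for $V^\infty(\varphi)$ is obtained by duality (Theorems~\ref{thm_uni_sis} and \ref{tm_uniq}) from the completeness in $W_0$ of the Gabor system $\cG(\varphi,(-\Lambda)\times\bZ)$, whose phase-space set $(-\Lambda)\times\bZ$ has planar density $D^-(\Lambda)>1$; only there do the Bargmann transform and Jensen's formula enter. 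The extra $\bZ$ factor in the frequency direction, which restores the planar density above the critical Fock density, is the essential structural point missing from your argument.
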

Since a sampling set for $\sisp^2(g) $ must necessarily satisfy
$D^-(\Lambda ) \geq 1$ by \cite{AG00,bacahela06}, this result is
optimal.  Theorem \ref{tmch1} thus validates
the heuristic understanding of the signal processing community  that
nonuniform sampling above the critical density (the Nyquist rate) leads to stable
reconstruction in a  shift-invariant space.
As with bandlimited functions \cite{orse02},
the  case of the critical density  $D^-(\Lambda )=1$ is subtle and deserves a more
detailed analysis. Generically,  the set
$\Lambda = r +  \bZ $ is  a sampling set, but there always exists at least one
$r_0 \in [0,1]$ such that $r_0 + \bZ $ fails to be a sampling set.

To the best of our knowledge, Theorem~\ref{tmch1} is
the first result of this kind beyond a purely analytic setting
(although complex variable methods enter our proofs through a
backdoor).  In contrast to the covering density, Theorem \ref{tmch1} allows for
truly nonuniform sets with large gaps  and underscores the use of
Beurling's density as the right performance metric for sampling.

We stress that Theorem~\ref{tmch1} is new
even for the \sis\ with  Gaussian generator.
We believe that it opens a new avenue in approximation theory of radial basis functions, because sampling inequalities
have become an integral part of the error analysis for scattered data interpolation \cite{NWW2005,hamanawa12}.
We also have a dual result for interpolation in \sis s, formulated in terms of
Beurling's upper density $D^+(\Lambda)$.
\begin{tm}
\label{th_intro_interp}
Let $\varphi (x) = e^{-cx^2}$   be a  Gaussian. If $\Lambda \subseteq \bR$ is separated and $D^+(\Lambda) < 1$,
  then $\Lambda $ is  an  interpolating set
for $\sisp^2(\varphi )$. This means that for all $a
\in \ell ^2(\Lambda )$ there exists $f\in \sisp^2(\varphi{} )$ such that
$f(\lambda ) = a_\lambda $.
\end{tm}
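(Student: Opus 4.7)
\medskip
\noindent\textbf{Proof strategy.} The plan is to transfer the interpolation problem from $V^2(\varphi)$ to a concrete space of entire functions in the spirit of Bargmann--Fock, where classical complex-analytic interpolation methods can be adapted to the periodic structure coming from shift-invariance.

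\medskip
\noindent\textbf{Step 1 (entire-function representation).} For $\varphi(x)=e^{-cx^2}$ and $f=\sum_{k\in\bZ}c_k\,T_k\varphi\in\sisp^2(\varphi)$, I would define
\[
F(z):=e^{cz^2}f(z)=\sum_{k\in\bZ}c_k\,e^{-ck^2}\,e^{2ckz},\qquad z\in\bC.
\]
A direct computation shows that $F$ is entire, $\frac{\pi i}{c}$-periodic in the imaginary direction, and that the map $f\mapsto F$ is a topological isomorphism from $\sisp^2(\varphi)$ onto a closed subspace $\mathcal{V}$ of a Bargmann--Fock-type weighted space (namely, the subspace of $\frac{\pi i}{c}$-periodic elements). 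Under this identification, the desired interpolation $f(\lambda)=a_\lambda$ on $\Lambda\subset\bR$ is equivalent to finding $F\in\mathcal{V}$ with $F(\lambda)=a_\lambda e^{c\lambda^2}$.

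\medskip
\noindent\textbf{Step 2 (Lagrange construction).} Next I would build a master function $G\in\mathcal{V}$ whose real zero set is exactly $\Lambda$; by periodicity, its full zero set in $\bC$ is the ``lattice'' $\Lambda+\frac{\pi i}{c}\bZ$. For each $\mu\in\Lambda$ the quotient $G(z)e^{cz}/\sinh(c(z-\mu))$ is entire, $\frac{\pi i}{c}$-periodic (since $e^{cz}$ and $\sinh(c(z-\mu))$ are both antiperiodic), and vanishes on $\Lambda\setminus\{\mu\}$. After normalization this yields a Lagrange basis $\{g_\mu\}\subset\mathcal{V}$ with $g_\mu(\lambda)=\delta_{\lambda\mu}$, and the candidate interpolant is
\[
F(z)=\sum_{\mu\in\Lambda} a_\mu\,e^{c\mu^2}\,g_\mu(z),
\]
so that $f(z)=e^{-cz^2}F(z)\in\sisp^2(\varphi)$ interpolates the data. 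The bound $\|F\|_{\mathcal{V}}\lesssim\|a\|_{\ell^2}$ should follow from pointwise bounds on $G$, the separation of $\Lambda$, and the Gaussian weight appearing in the norm of $\mathcal{V}$, mirroring the standard Beurling estimates for Paley--Wiener interpolation.

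\medskip
\noindent\textbf{Main obstacle.} The delicate step is the construction of the master function $G\in\mathcal{V}$ with real zero set $\Lambda$ and Fock-type growth: one has to produce a $\frac{\pi i}{c}$-periodic entire function that vanishes simply at every $\lambda\in\Lambda$ (and nowhere else in the strip $0\le\Im z<\pi/c$), and whose norm in $\mathcal{V}$ is controlled. This is exactly the ``interplay between zero sets in shift-invariant spaces and functions in the Bargmann--Fock space'' highlighted in the abstract. The condition $D^+(\Lambda)<1$ is used precisely here: it is the threshold that makes a periodized Weierstrass product over $\Lambda+\frac{\pi i}{c}\bZ$ converge with the growth required to land in $\mathcal{V}$, in analogy with the role of $\sin(\pi z)$ as the master function for Paley--Wiener interpolation. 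Once $G$ is constructed with sharp growth control, the remaining Lagrange estimates and the norm bound $\|F\|_{\mathcal{V}}\lesssim\|a\|_{\ell^2}$ are routine consequences of separation of $\Lambda$ and the Gaussian weight.
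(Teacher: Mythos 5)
Your Step 1 is correct: for $f=\sum_k c_k\varphi(\cdot-k)$ one indeed has $e^{cz^2}f(z)=\sum_k c_ke^{-ck^2}e^{2ckz}$, which is entire and $\tfrac{\pi i}{c}$-periodic, and the Laurent expansion in $w=e^{2cz}$ shows that this identifies $V^\infty(\varphi)$ (resp.\ $V^2(\varphi)$) with the periodic entire functions of growth $O(e^{c(\Re z)^2})$ (resp.\ a weighted $\ell^2$ version). The numerology in your ``main obstacle'' also checks out: $\Lambda+\tfrac{\pi i}{c}\bZ$ has planar density $D^+(\Lambda)\cdot\tfrac{c}{\pi}$, which is below the critical density $\tfrac{\Delta\phi}{2\pi}=\tfrac{c}{\pi}$ for the weight $\phi(z)=c(\Re z)^2$ exactly when $D^+(\Lambda)<1$. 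However, there is a genuine gap, and you have located it yourself: the entire difficulty of the theorem is concentrated in the construction of the master function $G$ with zero set exactly $\Lambda+\tfrac{\pi i}{c}\bZ$, periodicity, growth $\asymp e^{c(\Re z)^2}$, and --- crucially --- the quantitative lower bounds $|G(x)|\gtrsim \mathrm{dist}(x,\Lambda)\,e^{cx^2-\varepsilon(\dots)}$ and $|G'(\mu)|\gtrsim e^{c\mu^2-\varepsilon(\dots)}$ that are needed to make the Lagrange series converge with $\|F\|\lesssim\|a\|_{\ell^2}$. These estimates are not ``routine consequences of separation and the Gaussian weight'': in the Paley--Wiener and Fock settings they constitute the hard core of the theorems of Beurling and of Lyubarskii--Seip (comparison with a reference lattice, Weierstrass $\sigma$-function, redistribution of zeros), and here one faces the additional constraint that $G$ and all quotients $g_\mu$ must stay in the periodic subspace. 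The paper explicitly notes that, unlike the Paley--Wiener space, a shift-invariant space has no free bandwidth parameter to absorb the losses in such a construction. Asserting that $D^+(\Lambda)<1$ ``makes the periodized Weierstrass product converge with the growth required'' is a statement of what must be proved, not a proof.

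For comparison, the paper avoids this construction entirely. It proves (Theorem~\ref{th_interp}, via a direct-integral decomposition of the Gabor synthesis operator) that $\Lambda+x$ is an interpolating set for $V^2(g)$ for all $x$ if and only if $\cG(g,(-\Lambda)\times\bZ)$ is a Riesz sequence in $L^2(\bR)$, and then invokes the Lyubarskii--Seip theorem (Theorem~\ref{th_ly_seip}(b)): since $D_2^+((-\Lambda)\times\bZ)=D^+(\Lambda)<1$, the Gaussian Gabor system is a Riesz sequence, hence $\Lambda$ is interpolating. The hard complex analysis is thus outsourced to the known Fock-space result rather than redone on the cylinder. If you want to complete your direct approach, you would essentially have to transplant Seip's multiplier construction to the periodic weighted space $\mathcal{V}$ --- a worthwhile but substantial project, not a gap that closes by analogy.
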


\subsection{Gabor analysis}

Surprisingly, the  problem of sampling in \sis s is
intimately connected  with the construction of Gabor frames. As a
consequence of Theorem~\ref{tmch1} we  make substantial progress
towards  a conjecture that was originally expressed by I.\ Daubechies
in~\cite{da90} and then refined in~\cite{GS13,mystery}.

 Let
$ (x,\xi )  \in \bR ^2 $  and $ M_\xi T_x g(t) =
e^{2\pi i \xi t} g(t-x)$ be the corresponding phase-space shift (\tfs
). For  a separated set $\Omega \subseteq \bR ^2$ and a   generator $g\in \ltwo $
  (a window function in the established terminology), let  $\cG (g,
\Omega ) = \{ M_\nu T_\mu  g: (\mu ,\nu )  \in \Omega \}$ be the set of
\tfs s over $\Omega $. We study the question when $\cG (g,\Omega )$ is
a Gabor frame, i.e., when there exist $A,B>0$, such that
\begin{equation} \label{gfra}
  A\|f\|_2^2 \leq \sum _{(\mu ,\nu )  \in \Omega } |\langle f, M_\nu T_\mu
  g\rangle |^2 \leq B \|f\|_2^2 \qquad \text{ for all } f \in \ltwo \,  .
\end{equation}
In other words, $\Omega $ is a sampling set for the space $V = \{F
\in L^2(\bR ^2): F(x,\xi ) = \langle f, M_\xi T_x g\rangle \text{
  for } f\in L^2(\bR )\}$.
Again there is a universal necessary density condition given by the
density theorem for Gabor frames~\cite{heil07}. \emph{If $\cG (g,\Omega )$
  is a frame for $\ltwo$, then $D^-_2(\Omega ) \geq 1$} (where
$D^-_2$ is the Beurling density on $\bR ^2$). Under mild conditions
on the  window $g$, the inequality is strict $D_2^-(\Omega ) >
1$~\cite{bhw95, AFK14,grorro15}.

On the other hand, the
derivation of optimal sufficient conditions for \eqref{gfra} to hold
is an old and difficult problem in analysis and is usually studied
only for lattices $\Omega = \alpha \bZ \times \beta \bZ $, $\alpha ,
\beta >0$ or $\Omega = A\bZ ^2$ with $A \in \mathrm{GL}(2,\bR )$
~\cite{book,da90}.

Over a period of 25 years    a complete  characterization was  obtained
only for the  Gaussian~\cite{lyub92,seip92}, the truncated and the
symmetric  exponential
functions~\cite{janssen96,janssen03a}, the
hyperbolic secant~\cite{JS02}, and for \tp\ functions of
finite type $\geq 2$~\cite{GS13}. The fact that  all these functions are totally
positive  led to the following conjecture: \emph{For a
  continuous totally positive function $g \in L^1(\mathbb{R})$ the set $\cG (g,\alpha
  \bZ \times   \beta \bZ )$ is a frame, \fif\ $\alpha \beta <1$.}

Our second main result proves this conjecture for totally positive
functions of Gaussian type and extends the characterization of Gabor
frames to semi-regular sets of the form $\Omega = \Lambda \times \bZ $
for $\Lambda \subseteq \bR $.

\begin{tm}
\label{th_intro_gab}
Assume that  $g$ is a \tp \ function of  Gaussian type.  Let $\Lambda
\subseteq \bR $ be a separated set.

 Then $\cG (g, \Lambda \times \bZ )$ is a
Gabor frame for $L^2(\bR )$, \fif\   $D^-(\Lambda ) > 1$.
\end{tm}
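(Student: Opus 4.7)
The necessity of $D^-(\Lambda)>1$ is immediate from the strict density theorem for Gabor frames. Since a totally positive function of Gaussian type is Schwartz and hence lies in Feichtinger's algebra $S_0$, the strict-inequality versions of the density theorem \cite{bhw95,AFK14,grorro15} give $D^-_2(\Lambda\times\bZ)>1$ whenever $\cG(g,\Lambda\times\bZ)$ is a frame. Since $\bZ$ has Beurling density one in $\bR$, $D^-_2(\Lambda\times\bZ)=D^-(\Lambda)$, yielding $D^-(\Lambda)>1$.

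For the sufficient direction, my plan is to reduce the Gabor frame condition to a sampling statement in a shift-invariant space and then invoke Theorem \ref{tmch1}. The key observation is a Walnut-type Parseval identity in the integer frequency variable $k$: since $\langle f,M_k T_\lambda g\rangle$ is the $k$-th Fourier coefficient of the $1$-periodic function $x\mapsto \sum_{j\in\bZ}f(x+j)\overline{g(x+j-\lambda)}$, Parseval on the circle yields
\begin{equation*}
  \sum_{k\in\bZ}|\langle f,M_k T_\lambda g\rangle|^2 = \int_0^1\Big|\sum_{j\in\bZ}f(x+j)\,\overline{g(x+j-\lambda)}\Big|^2 dx.
\end{equation*}
Setting $\tilde g(t):=\overline{g(-t)}$ and $G_{f,x}(\mu):=\sum_j f(x+j)\,\tilde g(\mu-j)\in V^2(\tilde g)$, the substitution $\mu=\lambda-x$ identifies the bracketed sum as $G_{f,x}(\lambda-x)$, so summing over $\lambda\in\Lambda$ gives
\begin{equation*}
  \sum_{\lambda\in\Lambda}\sum_{k\in\bZ}|\langle f,M_k T_\lambda g\rangle|^2 = \int_0^1\sum_{\mu\in\Lambda-x}|G_{f,x}(\mu)|^2 dx.
\end{equation*}
Because $\tilde g$ has stable integer translates, $\|G_{f,x}\|_2^2\asymp\sum_j|f(x+j)|^2$, and hence $\int_0^1\|G_{f,x}\|_2^2\,dx\asymp\|f\|_2^2$. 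Consequently, any sampling inequality for $V^2(\tilde g)$ on the translated set $\Lambda-x$ that is \emph{uniform} in $x\in[0,1)$ immediately produces the Gabor frame bounds for $\cG(g,\Lambda\times\bZ)$.

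A short computation shows that $\hat{\tilde g}(\xi)=\overline{\hat g(\xi)}=\prod_{j=1}^n(1-2\pi i\delta_j\xi)^{-1}e^{-c\xi^2}$, so $\tilde g$ is again totally positive of Gaussian type. For every $x\in[0,1)$, the translate $\Lambda-x$ is separated with the same separation constant as $\Lambda$ and satisfies $D^-(\Lambda-x)=D^-(\Lambda)>1$, so Theorem \ref{tmch1} certifies $\Lambda-x$ as a sampling set for $V^2(\tilde g)$. The main obstacle, which I expect to be the essential technical step, is promoting this pointwise-in-$x$ application of Theorem \ref{tmch1} to sampling inequalities with $x$-uniform constants. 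I would attack this in one of two ways: either by quantifying the proof of Theorem \ref{tmch1} so that its sampling constants are seen to depend only on the density excess $D^-(\Lambda)-1$ and the separation of $\Lambda$ (both invariant under $\Lambda\mapsto\Lambda-x$), or by a compactness argument on the precompact family $\{\Lambda-x:x\in[0,1)\}$ in the Chabauty topology, coupled with the stability of sampling inequalities under weak limits of translates that Beurling-style methods naturally supply. Once uniform sampling bounds are in hand, the displayed identity closes the proof of sufficiency.
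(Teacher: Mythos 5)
Your overall architecture coincides with the paper's: necessity via the strict density theorem (identical), and sufficiency by converting the Gabor frame inequality into a sampling problem for a shift-invariant space and invoking Theorem~\ref{tmch1}. The only presentational difference on the sufficiency side is that you derive the duality by a direct Parseval computation in the frequency variable, whereas the paper packages the same identity as a direct-integral decomposition of the synthesis operator over the fibers $P_{\Lambda+x}(g)$ (Theorem~\ref{thm:RSLambda}); your computation, including the passage to $\tilde g(t)=\overline{g(-t)}$, which is again totally positive of Gaussian type, is correct and equivalent.

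The one step you leave open --- uniformity in $x$ of the sampling constants for $\Lambda-x$ --- is a genuine issue, but it is resolved in the paper by an argument considerably more elementary than either of your two proposals. Because $g\in W_0$, the map $x\mapsto P_{\Lambda+x}(g)$ is continuous in operator norm (a jitter-error bound), so $x\mapsto\sigma_{\min}(P_{\Lambda+x}(g))$ is continuous on the compact interval $[0,1]$; once each translate is a sampling set, the lower bounds are automatically uniform. This is exactly the content of the implication (b)~$\Rightarrow$~(c) in Theorem~\ref{thm:RSLambda}. You therefore do not need to requantify the proof of Theorem~\ref{tmch1}, nor to establish lower semicontinuity of sampling bounds under weak limits of sets (which is delicate and is the subject of the separate Theorem~\ref{thm:linfinity}). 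Where weak limits \emph{do} enter the paper's argument is one level down, inside the proof that each separated $\Lambda$ with $D^-(\Lambda)>1$ is a sampling set for $V^2$: there one shows every $\Gamma\in W_{\bZ}(\Lambda)$ is separated with $D^-(\Gamma)\geq D^-(\Lambda)>1$ (Lemma~\ref{lemma_dens_lim}) and hence a uniqueness set for $V^\infty$, and then applies the characterization without inequalities. If you simply cite Theorem~\ref{tmch1} as a black box and add the jitter-continuity observation, your proof closes.
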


We formulate the case of rectangular lattices explicitly as a
corollary.

\begin{cor} \label{corch1}
  Assume that  $g$ is a \tp\ function of  Gaussian type.  Then $\cG
  (g, \alpha \bZ \times \beta \bZ ) $ is a frame, \fif\ $\alpha \beta
  <1$.
\end{cor}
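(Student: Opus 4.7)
The plan is to deduce Corollary~\ref{corch1} directly from Theorem~\ref{th_intro_gab} by a standard dilation argument that reduces a rectangular lattice $\alpha\bZ \times \beta\bZ$ to the semi-regular form $\Lambda \times \bZ$, so that the one-dimensional sampling density theory can be applied. Since Theorem~\ref{th_intro_gab} already handles semi-regular sets of this form, the corollary should be essentially a change-of-variables computation combined with the well-known density $D^-(a\bZ)=1/a$.

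Concretely, I would introduce the unitary dilation $D_\beta f(t)=\beta^{1/2}f(\beta t)$ on $\ltwo$ and compute that $\langle f, M_{n\beta} T_{k\alpha} g\rangle = \langle D_{1/\beta} f, M_n T_{k\alpha\beta}(D_{1/\beta} g)\rangle$ for all integers $k,n$. Setting $G:=D_{1/\beta}g$, this intertwining shows that the Gabor system $\cG(g,\alpha\bZ\times\beta\bZ)$ is a frame for $\ltwo$ if and only if $\cG(G,(\alpha\beta)\bZ\times\bZ)$ is a frame. This is where I would specialize Theorem~\ref{th_intro_gab} to $\Lambda=(\alpha\beta)\bZ$, for which the Beurling density is $D^-(\Lambda)=1/(\alpha\beta)$.

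It remains to verify that $G$ again belongs to the class~\eqref{eq:ch1}. A direct computation on the Fourier side gives $\hat G(\xi)=\beta^{1/2}\hat g(\beta\xi)=\beta^{1/2}\prod_{j=1}^n(1+2\pi i(\beta\delta_j)\xi)^{-1}e^{-c\beta^2\xi^2}$, so $G$ is (up to a harmless positive constant) \tp\ of Gaussian type with rescaled parameters $\beta\delta_j$ and $c\beta^2$. The rescaling constant does not affect the frame property, so Theorem~\ref{th_intro_gab} applies to $G$ and $\Lambda=(\alpha\beta)\bZ$.

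Combining these pieces, $\cG(g,\alpha\bZ\times\beta\bZ)$ is a frame \fif\ $D^-((\alpha\beta)\bZ)=1/(\alpha\beta)>1$, which is equivalent to $\alpha\beta<1$, as claimed. I do not foresee a genuine obstacle here: the only subtlety is the critical case $\alpha\beta=1$, for which the density theorem alone only yields $\alpha\beta\le 1$, but Theorem~\ref{th_intro_gab} rules out $D^-(\Lambda)=1$ and thereby takes care of this borderline as well.
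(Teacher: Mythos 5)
Your proposal is correct and follows essentially the same route as the paper: the proof of the semi-regular theorem in Section~\ref{sec_gab} performs exactly this dilation, observing that $g_\beta(x)=g(x/\beta)$ stays in the class \eqref{eq:ch1} up to a positive constant, and then invokes the $\Lambda\times\bZ$ characterization together with $D^-(\alpha\bZ)=1/\alpha$. The only cosmetic difference is that you specialize to the lattice before dilating while the paper dilates a general separated $\Lambda$; your handling of the critical case $\alpha\beta=1$ via the ``only if'' direction of Theorem~\ref{th_intro_gab} also matches the paper, where that direction rests on the nonuniform Balian--Low theorem.
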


What is still missing to prove the full conjecture  is the
class of \tp\ functions of infinite type, where
$  \hat g(\xi)= \prod_{j=1}^\infty  (1+2\pi i\delta_j\xi)^{-1} \,
e^{-c \xi^2}$, $\delta_j\in\bR \setminus \{0\}$, $c \geq 0$.
 It seems tempting to extend Corollary~\ref{corch1}  by  some form of limiting
procedure, but so far we have not succeeded in finding a rigorous argument.

Let us emphasize that  \tp\  functions constitute the only class of  window functions
 for which  a  characterization of  all frame generating
 lattices seems within reach.  For
 other natural classes of window functions, such as the $B$-splines or
 the Hermite functions, only partial results are known ~\cite{GL09}, and all
 explicit conjectures have been disproved~\cite{lemvig} and need a thorough reformulation.

\subsection{Totally positive functions}
As an important tool, we will derive a new
property of \tp\ functions related to zero sets in shift-invariant
spaces.
The following result will  serve as a central technical tool for the sampling results.
\begin{tm} \label{tmvardim}
  Assume that  $g$ is \tp\ of  Gaussian type.  Let $f = \sum _{k\in \bZ }
c_k g(\cdot - k)$  with coefficients $c\in \ell ^\infty (\bZ )$ and let
$N(f) = \{ x \in \bR : f(x) = 0\}$ be the zero set of
$f$.
Then  either $D^{-}(N(f)) \leq 1$ or $f \equiv 0$.
\end{tm}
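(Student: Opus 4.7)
My strategy is to prove a local variation-diminishing estimate: there is a constant $C = C_g$ such that for every $f \in V^\infty(g) \setminus \{0\}$ and every bounded interval $I \subset \bR$, $|N(f) \cap I| \le |I| + C$. Since an integer-indexed bounded sequence has at most $|I| + O(1)$ sign changes on $I$, this immediately yields $D^-(N(f)) \le 1$ from the definition of lower Beurling density.

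First I would peel off the Gaussian: by \eqref{eq:ch1}, $g = h_0 * \varphi$ where $\hat h_0(\xi) = \prod_{j=1}^n (1 + 2\pi i \delta_j \xi)^{-1}$ corresponds to a \tp\ function of finite type and $\varphi$ is the Gaussian with $\hat\varphi(\xi) = e^{-c\xi^2}$. This gives $f = H * \varphi$ with $H := \sum_k c_k h_0(\cdot - k) \in L^\infty(\bR)$ (because $c \in \ell^\infty(\bZ)$ and $h_0 \in L^1 \cap L^\infty$). For the finite-type, exponentially decaying $h_0$, a localized form of the classical Schoenberg-Karlin variation-diminishing property (as exploited in~\cite{GS13}) controls sign changes of $H$ on $[\alpha, \beta]$ by sign changes of $(c_k)$ on integers in $[\alpha - M, \beta + M]$, plus a boundary error depending only on the decay of $h_0$; this produces at most $(\beta - \alpha) + O(1)$ sign changes. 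The heart of the proof is then to transfer this bound from $H$ to $f = H * \varphi$ through the Gaussian smoothing. Here I would invoke the Bargmann-Fock connection: the entire extension
\begin{equation*}
F(z) := e^{c' z^2} f(z) = \int_\bR H(u)\, e^{2c' u z - c' u^2}\,du
\end{equation*}
(with $c'$ the spatial rescaling of the Fourier parameter $c$) is, up to normalization, the Bargmann transform of $H$. Boundedness of $H$ gives $|F(x + iy)| \le C e^{c' x^2}$ uniformly in $y$, placing $F$ in a weighted Bargmann-Fock-type space. Since $e^{c' z^2}$ never vanishes, the real zeros of $F$ coincide exactly with $N(f)$. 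A Phragm\'en-Lindel\"of / Jensen estimate in horizontal strips, combined with the shift-invariant, totally positive structure of $H$, would then bound the real zeros of $F$ on $[a, b]$ by the sign changes of $H$ on a slight enlargement of $[a, b]$, up to $O(1)$.

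The main obstacle is precisely this last transfer step. A planar zero-density bound on $Z(F) \subset \bC$ coming from off-the-shelf Bargmann-Fock theory is \emph{not} sufficient, because the real zeros of an entire function can in principle be much denser on $\bR$ than the planar Beurling density of its complex zero set. The correct argument must combine the anisotropic Fock-type growth $|F(x + iy)| \le C e^{c' x^2}$ (uniform in $y$, inherited from the Gaussian factor of $\hat g$) with the special structure of $F$ as the Bargmann transform of a shift-invariant sum built from the \tp\ $h_0$. This interplay between zero sets in shift-invariant spaces and functions in Bargmann-Fock space is exactly the new tool announced in the abstract, and it appears novel even in the case when $g$ is a pure Gaussian.
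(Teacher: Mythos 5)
There is a genuine gap, and in fact two. First, the headline estimate you aim for --- $\#\bigl(N(f)\cap I\bigr)\le |I|+C_g$ for \emph{every} bounded interval $I$ --- is false, so no correct argument can deliver it. Since $\{\varphi(\cdot-k)\}_k$ (Gaussian translates) is an extended Chebyshev system, for any $L$ one can choose finitely many coefficients $c_0,\dots,c_L$ so that $\sum_{k=0}^{L}c_k\varphi(\cdot-k)$ vanishes exactly at $L$ prescribed points crammed into an interval of length $L/2$; placing such blocks far apart (so that, by Hurwitz, the tails of the other blocks do not destroy the zeros) produces a nonzero $f\in\sisp^\infty(\varphi)$ and intervals $I_n$ with $\#\bigl(N(f)\cap I_n\bigr)-|I_n|\to\infty$. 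This is consistent with the theorem because $D^-$ is a $\liminf$ of an \emph{infimum} over window positions: the zero set may have arbitrarily dense local clusters as long as every large window can be slid to a sparse region. The same example kills the ``localized variation-diminishing property with a fixed window $M$'' you invoke for the finite-type factor: Schoenberg's variation-diminishing property is inherently global (it bounds the \emph{total} number of sign changes), and for kernels with unbounded support sign changes can migrate arbitrarily far from the supporting coefficients, so no window of fixed width $M$ can work. Second, you yourself flag that the transfer through the Gaussian convolution is ``the main obstacle'' and do not supply it; a Jensen estimate centered at a real point gives a quadratic-in-$r$ zero count with an uncontrolled $-\log|F(x_0)|$ term, and cannot yield a count linear in $|I|$.

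The paper takes a route that never attempts a local count. Theorem~\ref{tmvardim} is the contrapositive of the uniqueness statement in Theorem~\ref{thm:SamplingSet}(a): one \emph{assumes} $D^-(N(f))>1$ and shows $f\equiv0$. The finite-type factor is removed not by convolution splitting but by applying the differential operator $\prod_j(I+\delta_j\frac{d}{dx})$, which maps $f\in\sisp^\infty(g)$ to $\Phi\in\sisp^\infty(\varphi)$ with the \emph{same} coefficients; Lemma~\ref{Rolle} (a Rolle-type argument) shows this operator does not decrease the lower Beurling density of the zero set, so $D^-(N_\Phi)>1$. For the Gaussian, the real-line uniqueness problem is converted via Theorem~\ref{thm_uni_sis} into the completeness of the planar Gabor system $\cG(\varphi,(-N_\Phi)\times\bZ)$ in $W_0$, which is settled by the Bargmann transform and Jensen's formula on disks of radius $R\to\infty$ (Theorem~\ref{tm_uniq}) --- a global growth-versus-planar-zero-density argument for which the $\liminf$-$\inf$ density is exactly the right hypothesis. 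Your instinct that the Bargmann--Fock connection is the engine is correct, but the passage from planar to real-line information goes through Gabor duality and the $\ell^\infty$/$\ell^1$ functional-analytic machinery of Theorems~\ref{thm_uni_sis} and~\ref{thm:linfinity}, not through a pointwise count of real zeros.
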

This statement resembles the classical result on the density of zeros
of an entire function of exponential type~\cite{boas,levin}, but its
proof is surprisingly indirect and makes little use of complex
analysis methods.  Theorem~\ref{tmvardim} can
also be seen in the context of the variation-diminishing property
of totally positive functions~\cite{Schoenb1947a, Schoenb1948}.

\subsection{Methods}
We draw on methods from different
areas and exploit new interplays between them.

\vs

(i) \emph{Structure theory of Gabor frames.} The duality theory of
Gabor frames \cite{ron-shen97} in the formulation of Janssen~\cite{janssen95} relates the
frame property of $\cG (g, \alpha \bZ \times \beta \bZ )$ to a
(uniform) sampling problem in the \sis\ $\sisp^2(g)$ (with shifts by
$\beta \inv \bZ $ instead of $\bZ $). We  extend this connection  to nonuniform
sampling sets and derive  a characterization of semi-regular Gabor
frames, which we formulate in a coarse version as follows.
\begin{lemma*}
  $\cG (g, (-\Lambda ) \times \bZ )$ is a frame for $\ltwo$,
  \fif\ $x+ \Lambda $ is a sampling set for $\sisp^2(g)$  for all $x$ with uniform
  constants.
 \end{lemma*}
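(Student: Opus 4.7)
The plan is to recast the Gabor frame inequality as an integrated sampling inequality in $\sisp^2(g)$ via Fourier series in the frequency variable, and then, for the converse, to localize in the translation variable to extract a pointwise statement. For $F\in \ltwo$ and $\lambda\in\bR$ the Gabor coefficients $\ip{F}{M_n T_{-\lambda}g} = \int_\bR F(t)\overline{g(t+\lambda)} e^{-2\pi int}\, dt$ are the Fourier coefficients of the $1$-periodic function $\Phi_\lambda(t):=\sum_{k\in\bZ}F(t+k)\overline{g(t+k+\lambda)}$, so Parseval gives
\begin{equation*}
\sum_{n\in\bZ}|\ip{F}{M_n T_{-\lambda}g}|^2 = \int_0^1 |\Phi_\lambda(t)|^2\,dt.
\end{equation*}
For each $t\in[0,1)$ I would set $h_t(u):=\sum_{j\in\bZ} F(t-j)\overline{g(u-j)}$. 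Since \tp\ functions of Gaussian type are real valued, $h_t$ lies in $\sisp^2(g)$ with coefficient sequence $(F(t-j))_j$, and $\Phi_\lambda(t)=h_t(t+\lambda)$. Stable translates of $g$ give $\|h_t\|_2^2 \asymp \sum_j |F(t-j)|^2$ uniformly in $t$, and Fubini yields $\int_0^1\|h_t\|_2^2\,dt \asymp \|F\|_2^2$. Summing the Parseval identity over $\lambda\in\Lambda$ and interchanging sum and integral, the Gabor frame bound for $\cG(g,(-\Lambda)\times\bZ)$ becomes equivalent (up to constants depending only on $g$) to
\begin{equation*}
A\int_0^1\|h_t\|_2^2\,dt \le \int_0^1\sum_{\lambda\in\Lambda}|h_t(t+\lambda)|^2\,dt \le B\int_0^1\|h_t\|_2^2\,dt \quad\text{for all } F\in\ltwo.
\end{equation*}

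The forward direction is then immediate: a uniform pointwise sampling inequality at every shifted set $x+\Lambda$ transfers to the above bound by integration against $t$. For the converse, given $f=\sum_j c_j g(\cdot-j)\in\sisp^2(g)$ and $x\in\bR$, I would fix $\delta\in(0,1/2)$ and define $F$ by setting $F(s):=c_j$ on $[x-j-\delta,x-j+\delta]$ for each $j\in\bZ$ and $F(s):=0$ elsewhere. A direct check using the covariance $h_{t+1}=T_1 h_t$ shows that $h_t$ is an integer translate of $f$ on each copy of $[x-\delta,x+\delta]+\bZ$ and vanishes otherwise; the integrated sampling sum therefore reduces to $\int_{x-\delta}^{x+\delta}\sum_\lambda |f(t+\lambda)|^2\,dt$, while $\|F\|_2^2\asymp 2\delta\|f\|_2^2$. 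Dividing by $2\delta$ and letting $\delta\to 0$, the Lebesgue differentiation theorem yields
\begin{equation*}
A\|f\|_2^2 \le \sum_{\lambda\in\Lambda}|f(x+\lambda)|^2 \le B\|f\|_2^2 \qquad \text{for a.e. } x\in\bR,
\end{equation*}
and continuity of $x\mapsto \sum_\lambda|f(x+\lambda)|^2$, which follows from the smoothness and decay of \tp\ generators of Gaussian type, upgrades this to every $x\in\bR$.

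The main obstacle is this converse direction: the Gabor frame inequality is inherently an $L^2$-averaged statement in the frequency fiber $t$, whereas the lemma requires pointwise sampling bounds with constants independent of $x$. The localization via characteristic-type test functions on thickened integer translates of $x$, together with Lebesgue differentiation and the regularity-based upgrade from almost-every to every $x$, is the decisive technical ingredient of the reduction.
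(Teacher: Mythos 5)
Your proof is correct, and its first half is the same fiberization on which the paper's proof of Theorem~\ref{thm:RSLambda} rests: your assignment $F\mapsto (h_t)_{t\in[0,1)}$ is precisely the unitary $V$ of that proof, the Parseval identity over the modulations is the passage $VC=\widehat C U$, and your integrated two-sided bound is the statement that the direct integral $\widehat C=\int_I^\oplus P_{\Lambda+x}(g)'\,dx$ is bounded below together with its adjoint. Where you genuinely diverge is in extracting the pointwise fiber statement from the integrated one: the paper quotes the general direct-integral fact that $\widehat C^\ast$ is bounded below \fif\ $\essinf_x \sigma_{\min}(P_{\Lambda+x}(g))>0$, and passes from almost every $x$ to every $x$ using the operator-norm continuity of $x\mapsto P_{\Lambda+x}(g)$ (a jitter-error bound valid exactly for $g\in W_0$); you instead re-prove the relevant implication by hand with the thickened test functions $F=\sum_j c_j\,\chi_{[x-j-\delta,\,x-j+\delta]}$ and Lebesgue differentiation, which is a clean, self-contained substitute for the appeal to \cite{dixmier81}. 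Two small repairs: (i) you use that $g$ is real-valued so that $h_t\in\sisp^2(g)$, whereas the lemma concerns arbitrary $g\in W_0$ with stable shifts; this is harmless since $P_\Lambda(\overline{g})c=\overline{P_\Lambda(g)\overline{c}}$, so the lower and upper sampling bounds for $g$ and $\overline{g}$ coincide; (ii) the continuity of $x\mapsto\sum_{\lambda}|f(x+\lambda)|^2$ does not require the smoothness of \tp\ generators of Gaussian type --- for any $g\in W_0$, relatively separated $\Lambda$ and $f\in\sisp^2(g)$ it follows from the locally uniform convergence guaranteed by the amalgam estimate $\sum_{\lambda}\sup_{|u|\le 1}|f(x+\lambda+u)|^2\lesssim \no\,\|f\|_{W}^2$, and this is the continuity one actually needs at the stated level of generality (it is the same ingredient the paper uses for the implication (b) $\Rightarrow$ (c)).
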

In a crucial step, we will  relate a \emph{uniqueness problem} of the form
``$f \equiv 0$ on $\Lambda$ $\Rightarrow$ $f \equiv 0$ on $\bR$'' in a
\sis\  to the   completeness of  a Gabor
system in a suitable function space~(Theorem~\ref{thm_uni_sis}).

\vs

(ii) \emph{Frames and sampling without inequalities.}
We will derive several new abstract characterizations for sampling sets in \sis s that do not
require an inequality (Theorem~\ref{thm:linfinity}). These characterizations are inspired by
Beurling's approach to  band\-limited functions and deal with the
subtle interplay between sampling on the Hilbert space $\sisp^2(g)$  and
uniqueness on a larger space associated with the $L^\infty$-norm.
While in the Paley-Wiener space there is an explicit bandwidth parameter that
permits such arguments, no direct analog is available for shift-invariant spaces.
To circumvent this obstruction, we use a non-commutative version of
Wiener's Lemma  and the spectral invariance of matrices with $\ell^1$-off-diagonal decay.
 The  depth of the characterization of
sampling sets without inequality is hidden in these results about
spectral invariance, which go back to Baskakov~\cite{bas90} and
Sj\"ostrand~\cite{sj95}.

\smallskip

(iii) \emph{Totally positive functions.} We will use Schoenberg's
characterization of \tp\ functions by their Fourier-Laplace
transform \cite{Schoenb1947a}, but not the total positivity itself.

\vs

It is worthwhile to compare the methods and results for
totally positive functions of finite type in ~\cite{GS13} (where  $\hat g(\xi ) = \prod
_{j=1}^n (1 + 2\pi i \delta _j \xi )\inv $) to those for totally
positive functions of Gaussian type. For totally positive functions of
finite type we made essential use of the total positivity and the
Schoenberg-Whitney conditions to reduce the Gabor frame property to a
finite-dimensional problem. Corollary~\ref{corch1} remains  true, whereas
the statements of Theorems~\ref{tmch1}, \ref{th_intro_gab} and
~\ref{tmvardim}  as stated are false.

Therefore we developed a completely new proof strategy for our
results on totally positive functions of Gaussian type. The line of
arguments is quite  nonlinear and switches  between
sampling in \sis s and Gabor frames. We first prove
Theorem~\ref{tmch1} for the special case
of a Gaussian generator  $\varphi (x) = e^{-cx^2}$.  For this we will invoke the
fundamental characterization of
sampling sets in Bargmann-Fock space by Lyubarskii and
Seip~\cite{lyub92,seip92}, and in fact, slightly elaborate on their main
argument that relates density of zeros and growth. We then show that
$V^2(g)$ for totally positive $g$ of Gaussian type is mapped onto
$V^2(\varphi )$ by a suitable differential operator. This fact allows
us to  compare  the density of the zero sets of functions in the
corresponding shift-invariant spaces. The general version of
Theorem~\ref{tmch1} then follows  from the characterization of
sampling sets without inequalities. Finally we translate the sampling
results for the \sis s $V^2(g)$ into the characterization of Gabor
frames of Theorem~\ref{th_intro_gab}.

\vs

The paper is organized as follows. Section \ref{sec_samp1} introduces
shift-invariant spaces and the connections to
Gabor analysis. Section \ref{sec_wl} develops a characterization of
sampling sets without inequalities  in the spirit of Beurling.
These tools are exploited in Section \ref{sec_samp_gauss} to obtain sampling and interpolation results in
shift-invariant spaces with a Gaussian generator and some necessary
technical extensions.
 The density of zero sets for functions in
shift-invariant spaces with different generators is compared   in
 Section \ref{sec_samp_tp},
leading to the  main results on sampling
(Theorem~\ref{tmch1}). Finally, Section \ref{sec_gab} contains the
characterization of semi-regular Gabor frames
(Theorem~\ref{th_intro_gab}). In the Appendix we supply the postponed
proofs.

\section{Sampling in shift-invariant spaces and nonuniform Gabor families}
\label{sec_samp1}

We first set up the basics about \sis s, formulate several
structural characterizations and point out some connections
between shift-invariant spaces and Gabor families. For future reference we remark that the results in this section
 hold equally for sampling in \sis s in higher
dimensions (with identical proofs).

\subsection{Shift-invariant spaces}

Let $g:\bR \to \bC$ be a function in the Wiener amalgam space
$W_0=W(C,\ell_1)$, i.e., $g$ is continuous and
\begin{align*}
     \|g\|_{W} := \sum_{k\in\bZ} \max_{x\in [k,k+1]} |g(x)| <\infty.
\end{align*}
In the center of our study are the  shift-invariant spaces
\begin{align*}
    \sisp ^p(g) = \left\{ \sum_{k\in\bZ} c_k g(\cdot-k): (c_k)_{k\in\bZ} \in \ell^p(\bZ)\right\}
\end{align*}
for $1\le p\le \infty$. A comprehensive survey on shift-invariant spaces was given by Ron in \cite{Ron2001}.

Standard computations using H\"older's inequality ~\cite{AG01} show that
$$
   \big\| \ctilg \big\|_p\le \|g\|_W ~\|c\|_p,\qquad c\in \ell^p(\bZ),
$$
and consequently $\sisp ^p(g)\subseteq L^p(\bR)$.

Throughout we will assume and use the following stability of the
generator $g$.

\begin{tm}\label{thm:JiaMicchelli}
 Let $g\in W_0$. Then the following are equivalent.
\begin{itemize}
\item[(a)] There exists $p\in [1,\infty ]$ and  $C_p>0$ such that
$$
   \big\| \ctilg\big\|_p\ge C_p\|c\|_p,\qquad c\in \ell^p(\bZ).
$$
\item[(b)] For all  $p\in [1,\infty ]$ there exists a constant   $C_p>0$ such that
$$
   \big\| \ctilg\big\|_p\ge C_p\|c\|_p,\qquad c\in \ell^p(\bZ).
$$
\item[(c)] The shifts $\{g(\cdot-k): k\in\bZ\}$ are
  $\ell^\infty$-independent; this means that \\
$\ctilg\ne 0$ for every $c\in\ell^\infty(\bZ) \setminus\{0\}$.
\item[(d)] For every $\xi\in\bR$ we have $ \sum_{k\in\bZ} |\hat g(\xi+k)|^2 >0.$
\end{itemize}
\end{tm}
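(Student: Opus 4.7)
The plan is to establish the four-way equivalence through the cycle $(b)\Rightarrow(a)\Rightarrow(d)\Rightarrow(b)$, complemented by the auxiliary implications $(b)\Rightarrow(c)$ (trivial via $p=\infty$) and $(c)\Rightarrow(d)$ (a direct Fourier argument). The central object is the bracket function
\[
G(\xi):=\sum_{k\in\bZ} |\hat g(\xi+k)|^2,
\]
which is $1$-periodic. Since $g\in W_0$, the autocorrelation sequence $a_k=\int g(x)\overline{g(x-k)}\,dx$ lies in $\ell^1(\bZ)$ (a routine amalgam estimate: $\sum_k|a_k|\le\int|g(x)|\sum_k|g(x-k)|\,dx\le\|g\|_1\|g\|_W$), and these are precisely the Fourier coefficients of $G$; hence $G$ belongs to the Wiener algebra $A(\bT)$. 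Condition $(d)$ is the assertion that $G>0$ on $\bT$.

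For the deep direction $(d)\Rightarrow(b)$, I would invoke Wiener's lemma on $A(\bT)$: $G>0$ gives $1/G\in A(\bT)$, so $1/G=\sum_k b_k e^{-2\pi i k\cdot}$ with $(b_k)\in\ell^1(\bZ)$. Define the dual generator $\tilde g:=\sum_k b_k\,g(\cdot-k)$; the $\ell^1$-summability of $(b_k)$ combined with $g\in W_0$ yields $\tilde g\in W_0$. A short Fourier computation gives biorthogonality $\langle g(\cdot-j),\tilde g(\cdot-k)\rangle=\delta_{jk}$ and the reproducing formula $c_k=\langle f,\tilde g(\cdot-k)\rangle$ for $f=\sum_j c_j g(\cdot-j)$. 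The map $f\mapsto(\langle f,\tilde g(\cdot-k)\rangle)_k$ is bounded from $L^p(\bR)$ to $\ell^p(\bZ)$ via a standard split-and-Hölder estimate that expresses the coefficient sequence as a discrete convolution of $(\|f\|_{L^p([j,j+1])})_j\in\ell^p(\bZ)$ with $(\sup_{[j,j+1]}|\tilde g|)_j\in\ell^1(\bZ)$; Young's inequality then yields $\|c\|_p\le C_p\|f\|_p$ for every $p\in[1,\infty]$, which is $(b)$.

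The implications $(b)\Rightarrow(a)$ and $(b)\Rightarrow(c)$ (the latter at $p=\infty$) are immediate. For $(c)\Rightarrow(d)$ I argue by contrapositive: if $\hat g(\xi_0+k)=0$ for every $k\in\bZ$, set $c_k:=e^{2\pi ik\xi_0}\in\ell^\infty\setminus\{0\}$. The sum $f=\sum c_k g(\cdot-k)$ converges pointwise absolutely (by $g\in W_0$) and equals $g*\mu$ where $\mu=\sum_k e^{2\pi ik\xi_0}\delta_k$; its distributional Fourier transform is $\hat g\cdot\hat\mu=\hat g\cdot\sum_m\delta_{\xi_0+m}=\sum_m\hat g(\xi_0+m)\delta_{\xi_0+m}=0$, so $f\equiv 0$, violating $(c)$.

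The remaining link $(a)\Rightarrow(d)$ is the main obstacle. Contrapositively, assume $\hat g(\xi_0+k)=0$ for all $k$. For $p_0=\infty$ the witness $c_k=e^{2\pi ik\xi_0}$ from the previous paragraph already violates the lower bound in $(a)$. For $p_0<\infty$ I would take test sequences of modulated cutoffs $c_k^{(N)}:=\psi(k/N)\,e^{2\pi ik\xi_0}$ with $\psi\in\cS(\bR)$ chosen so that $\hat\psi$ is compactly supported in $[-\tfrac12,\tfrac12]$. A Riemann-sum argument gives $\|c^{(N)}\|_{p_0}^{p_0}\sim N\int_\bR|\psi|^{p_0}\,dt$, while $\widehat{Tc^{(N)}}(\xi)=\hat g(\xi)\hat\nu_N(\xi)$ for the measure $\nu_N=\sum_k c_k^{(N)}\delta_k$, and the support condition on $\hat\psi$ localizes $\hat\nu_N$ to the $(1/N)$-neighborhood of $\xi_0+\bZ$ with peak size $\sim N$. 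Since $\hat g$ is continuous with $\hat g(\xi_0+m)=0$ for every $m$ and $\hat g\to 0$ at infinity, the uniform-in-$m$ modulus of continuity of $|\hat g|$ at the points $\xi_0+m$ tends to $0$; this will yield $\|Tc^{(N)}\|_{p_0}=o(N^{1/p_0})$ and contradict $(a)$. The technical heart is precisely this $L^{p_0}$-estimate when $p_0\neq 2$: at $p_0=2$, Plancherel and the substitution $u=N(\xi-\xi_0+m)$ immediately reduce it to $\|Tc^{(N)}\|_2^2\le N\|\hat\psi\|_\infty^2\int_{-1/2}^{1/2}G(\xi_0+u/N)\,du=o(N)$, but for general $p_0$ no duality or interpolation shortcut applies, and one must work directly with $L^{p_0}$ Young/Minkowski-type bounds, possibly adapting the bump $\psi$ to $p_0$.
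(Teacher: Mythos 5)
First, a point of reference: the paper does not prove this theorem at all — it is quoted with attribution to Jia--Micchelli and to Ron, so there is no in-paper argument to compare yours against. Your overall architecture (the cycle $(b)\Rightarrow(a)\Rightarrow(d)\Rightarrow(b)$ together with $(b)\Rightarrow(c)\Rightarrow(d)$), the identification of $(d)$ with strict positivity of the bracket $G\in A(\bT)$, and the Wiener-lemma construction of a dual generator $\tilde g\in W_0$ for $(d)\Rightarrow(b)$ are sound and follow the standard route. The amalgam estimates you invoke (the bound $\sum_k|a_k|\le\|g\|_1\,\|g\|_W$, the Young-type bound for the analysis map $f\mapsto(\langle f,\tilde g(\cdot-k)\rangle)_k$, and $(c)\Rightarrow(d)$ via the vanishing of all Fourier coefficients of the continuous $1$-periodic function $e^{-2\pi ix\xi_0}\sum_k e^{2\pi ik\xi_0}g(x-k)$) all go through for $g\in W_0$.

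The genuine gap is $(a)\Rightarrow(d)$ for $p_0\in[1,\infty)$ with $p_0\neq2$, which you explicitly leave open — and this is the one implication carrying the real content, since everything else only yields the equivalence of $(b),(c),(d)$ plus the trivial $(b)\Rightarrow(a)$. Your plan with band-limited bumps $\psi$ is harder than necessary, and the worry about ``adapting $\psi$ to $p_0$'' signals that the Fourier-side localization does not transfer to $L^{p_0}$. The step closes on the space side with the sharp cutoff $c^{(N)}_k=e^{2\pi ik\xi_0}\chi_{[-N,N]}(k)$ and a Ces\`aro argument: since $\sum_{k\in\bZ}e^{2\pi ik\xi_0}g(x-k)\equiv0$, the image $f_N$ of $c^{(N)}$ equals $-\sum_{|k|>N}e^{2\pi ik\xi_0}g(\cdot-k)$; hence, with $G_m:=\max_{[m,m+1]}|g|$, the local supremum $\sup_{[j,j+1]}|f_N|$ is bounded both by $A_j:=\sum_{|k|\le N}G_{j-k}$ and by $B_j:=\sum_{|k|>N}G_{j-k}$. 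Summing $B_j$ over $|j|\le N$ and $A_j$ over $|j|>N$ reduces, after exchanging the order of summation, to Ces\`aro means of the tails of the $\ell^1$-sequence $(G_m)$, giving $\sum_j\min(A_j,B_j)=o(N)$ and therefore $\|f_N\|_{p_0}^{p_0}\le\|g\|_W^{\,p_0-1}\sum_j\min(A_j,B_j)=o(N)$, while $\|c^{(N)}\|_{p_0}^{p_0}=2N+1$. This contradicts $(a)$ for every finite $p_0$, requires no moment or band-limitation hypotheses on $g$ beyond $g\in W_0$, and with this insertion your proof is complete.
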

We say that $g$ has \emph{stable integer shifts} if any of the equivalent conditions of Theorem \ref{thm:JiaMicchelli}
holds. The equivalence was proved in
\cite[Theorem~3.5]{JiaMicchelli1991} and \cite[Theorem~29]{Ron2001}
under a slightly more general condition on $g$.

\subsection{Sampling, uniqueness, and interpolating sets for shift-invariant spaces}

Since for $g\in W_0$ all spaces $\sisp ^p(g)$ are subspaces of $C(\bR)$, sampling of functions $f\in \sisp ^p(g)$ is well-defined.
A subset  $\Lambda\subseteq \bR$ is called \emph{relatively separated}, if
$$  \no :=  \max _{x\in \bR } \# (\Lambda \cap [x,x+1]) < \infty \, ,$$ and it is called \emph{separated}
if $\inf\{ \abs{\lambda-\mu}: \lambda, \mu \in \Lambda, \lambda \not=
\mu\} = \delta >0$.
If follows that  every relatively separated set can be
partitioned into $n_\Lambda$ separated subsets $\Lambda _j$, $j = 1, \dots, \no$, with separation constant $1/2$.

Clearly, for a  relatively separated set  $\Lambda $
the lower Beurling density
$$
   D^-(\Lambda) := \mathop{{\rm liminf}}_{R\to\infty}\inf_{x\in\bR}
  \frac{\# (\Lambda\cap [x-R,x+R])}{2R}
$$
and the upper Beurling density $D^+(\Lambda)$ (with $\limsup$ and $\sup$ in its definition) are
finite.

 Let $g\in W_0$ have stable integer shifts. A relatively separated
 set  $\Lambda\subseteq \bR$ is called a  {\em sampling set} for the shift-invariant space $\sisp ^p(g)$,
$1\le p<\infty$, if there
are constants $A_p,B_p>0$ such that
\begin{align}
\label{eq_p_samp}
A_p \|f\|_p^p \le \sum_{\lambda\in\Lambda} |f(\lambda)|^p \le B_p\|f\|_p^p
\end{align}
for all $f\in \sisp ^p(g)$.
For $p=\infty$, we require that
\begin{align*}
A_\infty \|f\|_\infty \le \sup_{\lambda\in\Lambda} |f(\lambda)| \le B_\infty \|f\|_\infty
\end{align*}
holds for all $f\in \sisp^\infty(g)$.

A set $\Lambda $ is called a \emph{uniqueness set} for $\sisp ^p(g)$, if $f \in
\sisp ^p(g)$ and $f(\lambda ) = 0$ for all $\lambda \in \Lambda $ implies
that $f = 0$.

If for every $a \in \ell ^p(\Lambda )$ there exists an $f \in V^p(g)$
such that $f(\lambda ) = a_\lambda $, then $\Lambda $ is said to be an {\em interpolating set}.

These notions  can be expressed in terms of the matrices (called the
pre-Gramian in ~\cite{ron-shen97})
\begin{align}
\label{eq_P_matrix}
P_\Lambda(g):= \left( g(\lambda-k)\right) _{\lambda\in\Lambda,~k\in\bZ}.
\end{align}
If  $g\in W_0$, then  by Schur's test, the matrix
$P_\Lambda(g)$ defines a bounded linear operator $P_\Lambda(g):\ell^p(\bZ) \to \ell^p(\Lambda), 1 \leq p \leq \infty$,
and,
\begin{align*}
   \left\| P_\Lambda(g) c\right\|_p \le \no  ~\|g\|_W ~\|c\|_p,\qquad c\in\ell^p(\bZ),
\end{align*}
see for example ~\cite{AG01}. The equivalence of norms
$ \|\sum_k c_k g(\cdot-k)\|_p \asymp \|c\|_p$ and the definitions
imply the following characterizations.

\begin{lemma}
\label{lemma_samp_ops}
Assume that  $g\in W _0 $ has stable integer shifts. Let $1\leq
p\leq \infty $ and  $\Lambda
\subseteq \bR $ be relatively separated. Then:

(a) $\Lambda$ is a sampling set for $\sisp ^p(g)$, if and only
 if $P_\Lambda(g)$ is $p$-bounded below, i.e., if and only if  there exist $\tilde A_p>0$ such that
\begin{align*}
   \left\| P_\Lambda(g) c\right\|_p \ge \tilde A_p \|c\|_p  \quad\text{for all}\quad c\in \ell^p(\bZ).
\end{align*}

(b) $\Lambda $ is a uniqueness set
for $\sisp ^p(g)$,  if and only
 if $P_\Lambda(g)$ is one-to-one on $\ell ^p(\bZ )$.

(c) $\Lambda $ is an interpolating  set for $\sisp ^p(g)$, \fif\
 $P_\Lambda(g)$ is surjective from  $\ell ^p(\bZ )$ onto $\ell
 ^p(\Lambda )$.
\end{lemma}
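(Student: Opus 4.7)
The plan is to reduce all three statements to a single diagram involving the synthesis operator
\[
T_g : \ell^p(\bZ) \to V^p(g), \qquad T_g c = \sum_{k \in \bZ} c_k g(\cdot - k),
\]
and the evaluation (sampling) operator $R_\Lambda : V^p(g) \to \ell^p(\Lambda)$, $R_\Lambda f = (f(\lambda))_{\lambda \in \Lambda}$. A one-line substitution gives
\[
R_\Lambda T_g = P_\Lambda(g),
\]
since $f(\lambda) = \sum_k c_k g(\lambda-k) = (P_\Lambda(g)c)_\lambda$ for $f = T_g c$. Everything in the lemma will follow by transporting properties of $R_\Lambda$ through the isomorphism $T_g$.

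First I would record that, under the standing hypothesis $g \in W_0$ with stable integer shifts, Theorem~\ref{thm:JiaMicchelli} makes $T_g$ a bounded bijection from $\ell^p(\bZ)$ onto $V^p(g)$ for every $1 \le p \le \infty$, with $\|T_g c\|_p \asymp \|c\|_p$. Combined with the Schur-test bound $\|P_\Lambda(g) c\|_p \le n_\Lambda \|g\|_W \|c\|_p$ recalled just before the lemma, this yields $R_\Lambda = P_\Lambda(g) \circ T_g^{-1}$ as a factorization through bounded operators, so the operator-theoretic properties of $R_\Lambda$ on $V^p(g)$ correspond exactly to those of $P_\Lambda(g)$ on $\ell^p(\bZ)$.

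For part (a), I would note first that the upper sampling inequality is automatic for any relatively separated $\Lambda$: if $f = T_g c$, then $\|R_\Lambda f\|_p = \|P_\Lambda(g) c\|_p \lesssim \|c\|_p \asymp \|f\|_p$. Hence $\Lambda$ is sampling \fif\ $R_\Lambda$ is bounded below on $V^p(g)$, which via the isomorphism $T_g$ is equivalent to $\|P_\Lambda(g) c\|_p \ge \tilde A_p \|c\|_p$ on $\ell^p(\bZ)$. For (b), the uniqueness condition is just injectivity of $R_\Lambda$, and since $T_g$ is bijective, this is equivalent to injectivity of $P_\Lambda(g)$. For (c), the interpolation condition is that $R_\Lambda$ be surjective onto $\ell^p(\Lambda)$, which is equivalent to surjectivity of $P_\Lambda(g) = R_\Lambda T_g$. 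The case $p = \infty$ is handled identically once one reads the inequalities with the $\sup$-norm in place of the $\ell^p$-sum; the bijectivity of $T_g$ in that range is precisely condition (c) of Theorem~\ref{thm:JiaMicchelli}.

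There is no genuine obstacle in this lemma; it is a bookkeeping step that identifies the analytic notions of sampling, uniqueness, and interpolation with the algebraic notions of bounded-below, injective, and surjective for the pre-Gramian. The only point that warrants slight care is making sure the $p = \infty$ endpoint is not treated by duality (which would fail for $c_0$ versus $\ell^\infty$) but directly through the $\ell^\infty$-independence provided by Theorem~\ref{thm:JiaMicchelli}(c).
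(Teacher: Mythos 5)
Your proposal is correct and is exactly the argument the paper intends: the paper dispatches this lemma in one sentence (``the equivalence of norms $\|\sum_k c_k g(\cdot-k)\|_p \asymp \|c\|_p$ and the definitions imply the following characterizations''), which is precisely your factorization $P_\Lambda(g) = R_\Lambda T_g$ through the isomorphism $T_g$ furnished by Theorem~\ref{thm:JiaMicchelli}. Your added remarks --- that the upper sampling bound is automatic from the Schur-test estimate and that the $p=\infty$ endpoint rests on $\ell^\infty$-independence rather than duality --- are sound and merely make explicit what the paper leaves implicit.
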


\subsection{The connection between sampling and Gabor analysis}
Sampling and interpolation problems in the \sis\ $\sisp^2(g)$ are closely related to the construction
of Gabor frames and Riesz bases. The connection is implicit in the
duality theory by Janssen~\cite{janssen95} and  Ron and
Shen~\cite{ron-shen97} and has been instrumental in the
characterization of Gabor frames on lattices for totally positive functions of
finite type~\cite{GS13}. The main objective of this section is to extend the relation
between Gabor analysis and \sis\, to (i) more nonuniform
phase-space nodes and to (ii) uniqueness / completeness problems in
appropriate  function spaces.

We use $T_y g(x)=g(x-y)$ for translation and $M_\xi g(x)=e^{2\pi ix\xi}g(x)$ for modulation.
A  Gabor family is a collection of \tfs s, and we are particularly
interested in the semi-regular family
\begin{align*}
   \cG(g,(-\Lambda) \times \bZ)=
   \{ M_k T_{-\lambda} g: k\in\bZ,~\lambda\in\Lambda\},
\end{align*}
associated with the window function  $g$ and a set $\Lambda \subseteq \bR $.
By definition, this family is a Gabor frame, if there exist constants
$A,B>0$,  such that
\begin{equation}
  \label{eq:gablambdaframe}
  A\|f\|_2^2 \leq  \sum _{k \in \bZ }\sum_{\lambda\in\Lambda}  \left| \langle f, M_k T_{-\lambda} g\rangle\right|^2
  \leq B \|f\|_2^2 \qquad \text{ for all } f  \in L^2(\bR ).
\end{equation}

We next formulate several characterizations of Gabor frames that
establish the connection to sampling in shift-invariant spaces.
We recall the definition of the Zak transform  $Zg(x,\xi)= \sum_{k\in\bZ} g(x+k)e^{2\pi i
  k\xi}$ of a function  $g\in L^1(\bR )$.

\begin{tm}\label{thm:RSLambda}
Assume that  $g\in W_0$ has  stable integer shifts and that
$\Lambda\subseteq \bR$ is relatively separated.
The following are equivalent.
\begin{itemize}
\item[(a)]
The family $\cG(g,(-\Lambda) \times \bZ)$  is a frame for $L^2(\bR)$.

\item[(b)] The set $\Lambda+x$ is a sampling set of $\sisp^2(g)$ for
  every $x\in [0,1)$,  i.e., for each $x \in [0,1)$, there
exist $A_x,B_x>0$
such that
\begin{equation*}
A_x\|c\|_2^2 \leq \sum_{\lambda \in \Lambda}
\Big| \sum_{k \in \bZ} c_k g(\lambda+x-k) \Big|^2
  \leq B_x \|c\|_2^2
  \qquad \mbox{for all } c  \in \ell^2(\bZ ).
\end{equation*}

\item[(c)]  There exist $A,B>0$ such that, for all $ x\in [0,1)$,
\begin{equation}
\label{eq:samplingset2}
A\|c\|_2^2 \leq \sum_{\lambda \in \Lambda}
\Big| \sum_{k \in \bZ} c_k g(\lambda+x-k) \Big|^2
  \leq B \|c\|_2^2
    \qquad\mbox{for all } c  \in \ell^2(\bZ ).
\end{equation}

\item[(d)] There exist constants $A,B>0$ such that
\begin{equation}
  \label{eq:gablambda1}
  A\|h\|_2^2 \leq \sum _{\lambda \in \Lambda } \Big| \int _{0 }^1
    {h(\xi)} Zg(x+\lambda,\xi)\,d\xi
  \Big|^2   \leq B \|h\|_2^2
\end{equation}
for all $h  \in L^2([0,1])$ and $x\in [0,1)$.
\end{itemize}
Moreover, the optimal
constants in \eqref{eq:gablambdaframe}, \eqref{eq:samplingset2}, and \eqref{eq:gablambda1} coincide.
\end{tm}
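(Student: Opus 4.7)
My plan is to establish the cycle (a) $\Leftrightarrow$ (d) $\Leftrightarrow$ (c) $\Rightarrow$ (b) and to close with the nontrivial implication (b) $\Rightarrow$ (c), using three ingredients: the unitary Zak transform $Z:L^2(\bR)\to L^2([0,1]^2)$ to convert Gabor frames into a family of sampling problems in $L^2([0,1])$ parametrized by $x$; Fourier series on $[0,1]$ to turn the Zak-transform inequality of (d) into the shift inequality of (c); and a lower semi-continuity/compactness argument to upgrade the pointwise constants of (b) to the uniform constants in (c). Since every intermediate identification is an isometry, the optimal-constant claim will follow for free.

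For (a) $\Leftrightarrow$ (d), I will use the identities $Z(T_{-\lambda}g)(x,\xi)=Zg(x+\lambda,\xi)$ (with $Zg$ extended quasi-periodically in $x$) and $Z(M_k g)(x,\xi)=e^{2\pi ikx}Zg(x,\xi)$. Together these make $\langle f,M_k T_{-\lambda}g\rangle$ the $k$-th Fourier coefficient in $x$ of
\begin{equation*}
H_\lambda(x):=\int_0^1 Zf(x,\xi)\,\overline{Zg(x+\lambda,\xi)}\,d\xi,
\end{equation*}
so Plancherel gives $\sum_{k\in\bZ}|\langle f,M_k T_{-\lambda}g\rangle|^2=\int_0^1|H_\lambda(x)|^2\,dx$. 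Summing over $\lambda\in\Lambda$ and applying Fubini, (a) becomes the integral over $x\in[0,1]$ of the inequality in (d); since $F=Zf$ ranges over all of $L^2([0,1]^2)$, this integrated inequality is equivalent to its pointwise (in $x$) version with the same sharp constants. For (d) $\Leftrightarrow$ (c) I will expand $Zg(x+\lambda,\xi)=\sum_k g(x+\lambda+k)e^{2\pi ik\xi}$ and apply the Parseval isometry $h\mapsto c=\bigl(\int_0^1 h(\xi)e^{2\pi ik\xi}\,d\xi\bigr)_{k\in\bZ}$; this turns $\int_0^1 h(\xi)Zg(x+\lambda,\xi)\,d\xi$ into $\sum_k c_k g(x+\lambda+k)$, which is (c) after the reindexing $k\mapsto -k$.

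Since (c) $\Rightarrow$ (b) is immediate, the only genuine step is (b) $\Rightarrow$ (c). The upper bound in (c) is free: by Schur's test and $g\in W_0$, $\|P_{\Lambda+x}(g)\|_{\ell^2\to\ell^2}\le n_\Lambda\|g\|_W$ uniformly in $x$. For the lower bound I introduce the sampling operator $S_x:\ell^2(\bZ)\to\ell^2(\Lambda)$, $(S_x c)(\lambda)=\sum_k c_k\,g(\lambda+x-k)$, and I claim that $x\mapsto S_x$ is continuous in operator norm. This follows from a Schur-test estimate
\begin{equation*}
\|S_x-S_y\|_{\ell^2\to\ell^2}\le n_\Lambda\,\sup_{u\in\bR}\sum_{k\in\bZ}\bigl|g(u-k)-g(u+(y-x)-k)\bigr|,
\end{equation*}
which tends to $0$ as $y\to x$ by dominated convergence, using the Wiener-amalgam bound $\sum_k\max_{[k,k+1]}|g|<\infty$ together with the uniform continuity of $g$. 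The perturbation inequality $A_x\ge A_{x_0}-\|S_x-S_{x_0}\|$ then makes the least singular value $A_x:=\inf_{\|c\|=1}\|S_xc\|$ lower semi-continuous in $x$. The identity $S_{x+1}c=S_x(\tau c)$, where $\tau$ is the unitary left shift on $\ell^2(\bZ)$, shows that $A_x$ is one-periodic, hence defined on the compact circle $\bR/\bZ$; since a positive lower semi-continuous function on a compact set attains a positive infimum, we conclude $\inf_x A_x>0$, which is the uniform lower bound in (c).

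The main obstacle is this last step: promoting the pointwise injectivity with lower bound of (b) into a uniform bound. The heart of the matter is the operator-norm continuity of the sampling operator $S_x$ in $x$ for $g\in W_0$, together with the lower semi-continuity of the smallest singular value under such perturbations and the compactness of $\bR/\bZ$. Once continuity of $x\mapsto S_x$ is in hand the rest is soft; coincidence of the optimal constants follows throughout because $Z$, the Fourier-coefficient map, and $\tau$ are all isometric.
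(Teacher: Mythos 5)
Your proposal is correct and follows essentially the same route as the paper: the equivalence of (a), (c), (d) is obtained by fibering the frame inequality over $x\in[0,1)$ (your Zak-transform computation of $H_\lambda$ is exactly the paper's direct-integral decomposition $\widehat C=\int_I^\oplus P_{\Lambda+x}(g)'\,dx$ of the synthesis operator, and the passage (c)$\Leftrightarrow$(d) is the same Parseval reindexing), while (b)$\Rightarrow$(c) rests on the operator-norm continuity of $x\mapsto P_{\Lambda+x}(g)$ combined with periodicity and compactness of $\bR/\bZ$. The only difference is that you prove the jitter continuity directly via a Schur-test estimate, where the paper cites it from the sampling literature.
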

The proof is  postponed to the Appendix.
\begin{remark}
\normalfont{
\mbox{}
(i) For the case $\Lambda = \alpha \bZ $, the equivalence of (a),
(c), and   (d) is Janssen's version of the duality theory~\cite{janssen95}. It
is one of the most useful tools for deriving concrete and strong
results about Gabor frames~\cite{GS13,janssen96}. The extension of the equivalences
to nonuniform sampling sets is new and is derived by a suitable
modification of the known proofs.

(ii) Condition (d) can be reformulated by saying that for every $x\in [0,1)$
 the set of functions $\{ Zg(x+\lambda ,\xi ): \lambda \in \Lambda \}$
 is a frame for $L^2([0,1], d\xi)$.}
\end{remark}

Interpolation problems in \sis s can similarly be related to Gabor systems. We state the
following theorem, whose proof is given in the Appendix, together with the one of Theorem \ref{thm:RSLambda}.
\begin{tm}
\label{th_interp}
Let $g\in W_0$ have stable integer shifts and $\Lambda \subseteq \bR $ be relatively
separated. Then the following are equivalent.

\begin{itemize}
\item[(a)] $\cG (g, (-\Lambda) \times \bZ ) $ is a Riesz sequence in $L^2(\bR
)$.

\item[(b)] For each $x \in [0,1)$, $\Lambda+x$ is an interpolating set for $\sisp^2(g)$.

\item[(c)] The sets $\Lambda+x$, $x \in [0,1)$, are interpolating sets for $\sisp^2(g)$ with uniform constants,
i.e., for all $a \in \ell ^2(\Lambda)$ there exists $f_x \in \sisp^2(g)$
such that $f_x(\lambda+x)=a_\lambda$, for all $\lambda \in \Lambda$, and $\norm{f_x}_2 \leq C \|a\|_2$,
  with a constant $C $ independent of $x$.
\end{itemize}
\end{tm}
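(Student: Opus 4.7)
The plan is to prove this as the interpolation-side counterpart of Theorem~\ref{thm:RSLambda}, in which the role played by uniform boundedness below of the pre-Gramian $P_{\Lambda+x}(g)$ is taken over by its uniform surjectivity. By Lemma~\ref{lemma_samp_ops}(c), the latter is operator-theoretically equivalent to (c), so the main task is to match it to the Riesz sequence property (a) through a Zak transform decomposition, and then to show that pointwise surjectivity (b) self-improves to uniform surjectivity (c) by a continuity-compactness argument.

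For (a)$\Leftrightarrow$(c) I would mirror the Zak-transform computation that underlies Theorem~\ref{thm:RSLambda}. Using $Z(M_k T_{-\lambda}g)(x,\xi)=e^{2\pi ikx}\,Zg(x+\lambda,\xi)$, the unitarity of $Z$, and the fiberwise Fourier decomposition $d_\lambda(x):=\sum_k c_{k,\lambda}\,e^{2\pi ikx}$, Parseval in $k$ yields
\begin{equation*}
\Big\|\sum_{k,\lambda} c_{k,\lambda}M_k T_{-\lambda}g\Big\|_2^2
=\int_0^1 \Big\|\sum_{\lambda\in\Lambda} d_\lambda(x)\,Zg(x+\lambda,\cdot)\Big\|_{L^2([0,1])}^2\,dx,
\qquad \|c\|_2^2=\int_0^1\|d(x)\|_{\ell^2(\Lambda)}^2\,dx.
\end{equation*}
Hence (a) is equivalent to $\{Zg(x+\lambda,\cdot):\lambda\in\Lambda\}$ being a Riesz sequence in $L^2([0,1])$ with bounds uniform in $x\in[0,1)$. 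On the other side, the identity $\sum_k g(y-k)e^{-2\pi ik\xi}=Zg(y,\xi)$ combined with Parseval in $k$ gives $\|P_{\Lambda+x}(g)^*d\|_2^2=\|\sum_\lambda \bar d_\lambda Zg(x+\lambda,\cdot)\|_{L^2([0,1])}^2$, so the surjectivity of $P_{\Lambda+x}(g)$ with a uniform right-inverse constant is precisely this uniform Riesz sequence condition. Combined with Lemma~\ref{lemma_samp_ops}(c), this gives (a)$\Leftrightarrow$(c). The implication (c)$\Rightarrow$(b) is immediate.

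For (b)$\Rightarrow$(c) I would argue by continuity and compactness. Since $g\in W_0$ is continuous with absolutely summable local suprema, translations act continuously in the $W$-norm, and a Schur-type estimate shows that $x\mapsto P_{\Lambda+x}(g)$ is norm-continuous from $\bR$ into the bounded operators $\ell^2(\bZ)\to\ell^2(\Lambda)$. Moreover, the index shift $k\mapsto k+1$ on $\ell^2(\bZ)$ yields $P_{\Lambda+x+1}(g)=P_{\Lambda+x}(g)\circ S$ with $S$ unitary, so the scalar function $x\mapsto \|(P_{\Lambda+x}(g)P_{\Lambda+x}(g)^*)^{-1}\|$ (set to $+\infty$ where undefined) is $1$-periodic and continuous wherever finite. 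Under (b), Lemma~\ref{lemma_samp_ops}(c) and the open mapping theorem force finiteness at every $x\in[0,1]$, hence uniform boundedness on this compact interval, which is exactly the uniform interpolation constant of (c).

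The main delicate step, shared with the proof of Theorem~\ref{thm:RSLambda}, is the passage from the integrated Riesz inequality supplied by (a) to the pointwise-in-$x$ Riesz sequence statement with uniform constants. This requires a measurable selection argument that converts a putative $x$-set of positive measure where the uniform lower bound fails into a single $c\in\ell^2$ violating (a). Once this is handled exactly as in the parallel proof on the frame/sampling side, the remainder of the argument is a clean adaptation of Janssen's duality to the semi-regular setting with the uniformity in $x$ carefully tracked.
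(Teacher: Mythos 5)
Your proposal is correct and follows essentially the same route as the paper: the paper's proof realizes the Gabor synthesis operator as the direct integral $\int_I^\oplus P_{\Lambda+x}(g)'\,dx$ via the unitaries $U,V$ (which is exactly your Zak-transform fiberwise decomposition), identifies (a) with the uniform lower bound $\essinf_x \sigma_{\min}(P_{\Lambda+x}(g)')>0$, and obtains (b)$\Rightarrow$(c) from the operator-norm continuity of $x\mapsto P_{\Lambda+x}(g)$ together with compactness of $[0,1]$. The ``delicate step'' you flag (passing from the integrated inequality to the pointwise statement) is handled in the paper by citing the standard spectral theory of direct integrals, with the continuity in $x$ closing the gap between essential infimum and infimum, just as you indicate.
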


The following theorem relates Gabor systems and \sis s, but this time it concerns
$L^\infty$-uniqueness. It serves as one of our main tools in the following sections.

\begin{tm}
\label{thm_uni_sis}
Let $g \in W_0(\bR)$ and let $\Lambda \subseteq \bR$ be arbitrary. Suppose that
$\cG(g, (-\Lambda) \times \bZ)$ spans a  dense subspace  in $W_0(\bR)$. Then
for every $x \in \bR$,  $\Lambda +x$ is a uniqueness set for $V^\infty
(g)$.
\end{tm}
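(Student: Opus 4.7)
My plan is to convert the vanishing condition ``$f(\lambda+x)=0$ for all $\lambda\in\Lambda$'' into the vanishing of a single continuous linear functional on $W_0(\bR)$ that annihilates every element of the Gabor system $\cG(g,(-\Lambda)\times\bZ)$; the density hypothesis will then force the functional to be identically zero, from which $f\equiv 0$ will follow by testing against compactly supported bumps. The whole argument is a pure duality statement --- the only real choice is picking the right functional.

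Write $f=\sum_{k\in\bZ}c_k g(\cdot-k)\in V^\infty(g)$ with $c\in\ell^\infty(\bZ)$, and introduce the linear form
\[
\phi(h) := \sum_{\ell\in\bZ} c_\ell\, h(x-\ell), \qquad h\in W_0(\bR).
\]
The unit-interval decomposition in the definition of $\|h\|_{W_0}$ immediately gives $\sum_\ell |h(x-\ell)| \le \|h\|_{W_0}$, because the points $x-\ell$ lie in distinct unit intervals; hence $\phi$ is continuous with $\|\phi\|_{W_0^*}\le\|c\|_\infty$. A short computation, exploiting that $e^{-2\pi i k\ell}=1$ for $k,\ell\in\bZ$, shows that for every $k\in\bZ$ and $\lambda\in\Lambda$,
\[
\phi(M_k T_{-\lambda}g) = e^{2\pi i k x}\sum_{\ell\in\bZ}c_\ell\,g((x+\lambda)-\ell) = e^{2\pi i kx}\,f(x+\lambda) = 0.
\]
Thus $\phi$ vanishes on the generating set $\cG(g,(-\Lambda)\times\bZ)$, and by the density hypothesis together with the continuity of $\phi$, we conclude $\phi\equiv 0$ on $W_0(\bR)$.

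To finish, I would test $\phi$ against the translated bumps $h_m(t):=\chi(t-(x-m))$ for $m\in\bZ$, where $\chi\in W_0(\bR)$ is a fixed continuous function with $\supp\chi\subseteq(-1/2,1/2)$ and $\chi(0)=1$. Then $h_m(x-\ell)=\chi(m-\ell)=\delta_{m,\ell}$, so $\phi(h_m)=c_m$, and $\phi\equiv 0$ yields $c_m=0$ for every $m\in\bZ$, hence $f\equiv 0$. The only delicate point --- which I would regard as the main step rather than an obstacle --- is the initial idea of pairing the coefficients $c_\ell$ with point evaluations at the shifted integer lattice $x-\ell$, since this is precisely what reorganizes each Gabor element $M_kT_{-\lambda}g$ (up to a unimodular phase) into the scalar $f(x+\lambda)$. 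No complex-analytic machinery, no Wiener-type lemma, and no density condition on $\Lambda$ is required for this specific result.
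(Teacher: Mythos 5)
Your proof is correct, and it is the Hahn--Banach mirror image of the paper's argument rather than a different idea. The paper phrases $\ell^\infty$-injectivity of the pre-Gramian $P_\Lambda(g)$ as dense range of the pre-adjoint $P_\Lambda(g)'\colon\ell^1(\Lambda)\to\ell^1(\bZ)$, and proves the latter constructively: given $c\in\ell^1(\bZ)$ it builds $h=\sum_k c_k\eta(\cdot+k)\in W_0$ interpolating $c$ on $-\bZ$, approximates $h$ in $W_0$-norm by a finite element $f$ of the Gabor span, and reads off $\|c-P_\Lambda(g)'a\|_1\le\|f-h\|_{W_0}$ after summing the modulations into $a_\lambda=\sum_j b_{\lambda,j}$. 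You instead exhibit the annihilator directly: the Dirac comb $\phi=\sum_\ell c_\ell\delta_{x-\ell}$ lies in $W_0^*=W(\mathcal M,\ell^\infty)$ with $\|\phi\|_{W_0^*}\le\|c\|_\infty$, the phase identity $e^{-2\pi ik\ell}=1$ collapses $\phi(M_kT_{-\lambda}g)$ to $e^{2\pi ikx}f(x+\lambda)=0$, and density kills $\phi$; the bump test then recovers $c=0$. The two computations are the same pairing run from opposite sides of the duality (your $\chi$ plays the role of the paper's $\eta$). What your version buys is a small economy: it avoids the closed-range/adjoint bookkeeping and treats the shift $x$ natively (the paper reduces to $x=0$ by ``shifting $\Lambda$ suitably''), and it makes visible that the only input is the trivial fact that a continuous functional vanishing on a dense subspace is zero. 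Your closing remark is also accurate: no Wiener-type lemma or density condition on $\Lambda$ enters at this stage; those are needed elsewhere in the paper, not here.
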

\begin{proof}
Suppose that $\mathrm{span}\, \cG(g, (-\Lambda) \times \bZ)$ is dense in $W_0(\bR)$. For
simplicity, we assume that $x=0$ (otherwise, we shift $\Lambda$
suitably).  We need to show that  the matrix $P_\Lambda (g) $ is
one-to-one from  $\ell ^\infty (\bZ )$ to $\ell ^\infty (\Lambda
)$. By duality, this is equivalent to saying that the matrix
$P_\Lambda(g)^\prime = (g(\lambda -k))_{k\in \bZ ,  \lambda \in \Lambda }$
viewed as an operator from $\ell ^1(\Lambda ) \to \ell ^1(\bZ )$ has
dense range in $\ell ^1(\bZ )$. Thus
we need to show
that for every  $c \in \ell^1(\bZ)$ and $\epsilon>0$
there is a sequence $a \in \ell^1(\Lambda)$ such that $\|P_\Lambda(g)^\prime
a-c\|_1 <\epsilon $.
Let $\eta \in C^\infty(\bR)$ be supported on $[-1/4,1/4]$
and $\eta (x) =  1$ in a neighborhood of  the origin, and let
\begin{align*}
h=\sum_{k \in \bZ} c_k \eta(\cdot+k).
\end{align*}
Then $h \in W_0$ and $h(-k)=c_k$  for all $k\in\bZ$.

Given $\epsilon>0$,
by assumption, there exists a finite linear combination
$$f(x):=\sum_{\lambda \in \Lambda} \sum_{j \in \bZ} b_{\lambda,j}
e^{2\pi i j x}  g(x+\lambda) $$
with coefficients $b \in \ell^1(\Lambda \times \bZ)$ (with finite
support),  such that
$\norm{f-h}_{W _0} < \epsilon$.

Let $a_\lambda := \sum_j b_{\lambda,j}$, and note that $\norm{a}_{\ell^1(\Lambda)} \leq
\norm{b}_{\ell^1(\Lambda \times \bZ)} <\infty$. In addition,
$f(-k) =  \sum _{\lambda \in \Lambda } \sum _{j\in \bZ } b_{\lambda ,j} g(\lambda -k) = (P_\Lambda(g)^\prime a)_k$.
Finally, we estimate
\begin{align*}
\norm{c-P_\Lambda(g)^\prime a}_{\ell^1(\bZ)}=
\norm{(f-h)| _{\bZ}}_{\ell^1(\bZ)} \leq \norm{f-h}_{W_0} <  \varepsilon.
\end{align*}
Hence the desired approximation holds.
\end{proof}

\section{Characterization of Beurling type for sampling sets and Gabor frames}
\label{sec_wl}

\subsection{Characterization of sampling sets for shift-invariant spaces}
According to \eqref{eq_p_samp},  a sampling set for
$\sisp^p(g)$ is also a uniqueness set for
$\sisp^p(g)$.  The sampling inequalities in
\eqref{eq_p_samp} are stronger than mere uniqueness: they quantify the
stability of the map $f|\Lambda \mapsto f$. It is a remarkable insight
due to Beurling \cite{be66},\cite[p.351-365]{be89}  that sampling sets
for bandlimited functions can be characterized in terms of uniqueness
sets for a larger space of functions. We now develop an instance of
this principle in the context of shift-invariant spaces and  relate  a
sampling problem on $\sisp^2(g)$ to a collection of uniqueness
problems on $\sisp^\infty(g)$. Analogous results hold for
Gabor frames~\cite{gr07-2,grorro15}.

For the formulation, we recall
Beurling's notion of a weak limit of a sequence of  sets.
A sequence $\{\Lambda_n: n \geq 1\}$ of subsets of $\bR$ is said to \emph{converge weakly}
to a set $\Lambda \subseteq \bR$,
denoted $\Lambda_n \weakconv \Lambda$, if for every open bounded interval $(a,b)$ and
every $\varepsilon >0$, there exist $n_0 \in \bN$ such that for all $n \geq n_0$:
\begin{align*}
\Lambda_n \cap (a,b) \subseteq \Lambda + (-\varepsilon,\varepsilon)
\mbox { and }
\Lambda \cap (a,b) \subseteq \Lambda_n + (-\varepsilon,\varepsilon).
\end{align*}
The notion of weak convergence of sets is closely related to local convergence in the Hausdorff metric,
but the precise relation is slightly technical (see \cite[Section 4]{grorro15}.

Given a set $\Lambda \subseteq \bR$, we let  $\WZ(\Lambda)$ denote the
collection of all  weak limits of integer
translates of $\Lambda$, i.e., $\Gamma \in \WZ(\Lambda)$, if there exists a sequence $\{k_n: n \geq 1\} \subseteq \bZ$
such that $\Lambda + k_n \weakconv \Gamma$. If $\Lambda$ is relatively separated, each set in $\WZ(\Lambda)$ is also
relatively separated (see, e.g., \cite[Lemma 4.5]{grorro15}).

The following characterization of sampling sets for $\sisp ^p(g)$  is new and
should be compared with Theorem \ref{thm:JiaMicchelli}.

\begin{tm}\label{thm:linfinity}
Let $g\in W_0$ have stable integer shifts and let
$\Lambda\subseteq \bR$ be a relatively separated set.
The following are equivalent:
\begin{itemize}
\item[(a)] $\Lambda$ is a sampling set of $\sisp ^p(g)$ for some $p \in [1,\infty]$.

\item[(b)] $\Lambda$ is a sampling set of $\sisp ^p(g)$ for all $p \in [1,\infty]$.

\item[(c)] Every $\Gamma \in \WZ(\Lambda)$ is a sampling set for $\sisp^\infty(g)$.

\item[(d)] Every $\Gamma \in \WZ(\Lambda)$ is a
    uniqueness set for $\sisp^\infty(g)$.

\item[(e)] For every $\Gamma \in \WZ(\Lambda)$,
the matrix $P_\Gamma (g)=(g(\gamma-k))_{\gamma \in \Gamma,~k\in\bZ}$
defines a bounded injective operator from  $\ell^\infty(\bZ)$ to $\ell
^\infty (\Gamma )$.
\item[(f)] For every $\Gamma \in \WZ(\Lambda)$,
the matrix  $P_{\Gamma}(g)^\prime =(g(\gamma-k))_{k\in\bZ, \gamma \in \Gamma}$ defines an operator
$\ell^1(\Gamma) \to \ell^1(\bZ)$ with dense range.
\end{itemize}
\end{tm}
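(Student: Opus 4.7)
The plan is to close the ring
\[ \text{(a)} \Leftrightarrow \text{(b)} \Rightarrow \text{(c)} \Rightarrow \text{(d)} \Leftrightarrow \text{(e)} \Leftrightarrow \text{(f)} \Rightarrow \text{(a)} \]
by assembling three ingredients: Lemma~\ref{lemma_samp_ops} together with Banach-space duality, which handles the formal block (c)--(d)--(e)--(f); Baskakov--Sj\"ostrand spectral invariance for matrices with $\ell^1$ off-diagonal decay, which delivers (a)$\Leftrightarrow$(b); and a Beurling-style weak-compactness argument for the two substantive implications (b)$\Rightarrow$(c) and (f)$\Rightarrow$(a). The structural fact underlying everything is that $g \in W_0$ forces the pre-Gramian $P_\Lambda(g)$ to lie in a Banach algebra of matrices dominated by convolution with an $\ell^1$ sequence (depending on $\|g\|_W$ and $\no$), which both feeds the spectral invariance step and supplies the uniform tail estimates needed for the limit arguments.

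I would first dispatch the easy equivalences. The implication (c)$\Rightarrow$(d) is tautological. Equivalence (d)$\Leftrightarrow$(e) is Lemma~\ref{lemma_samp_ops}(b) applied with $p=\infty$ to each $\Gamma \in \WZ(\Lambda)$. For (e)$\Leftrightarrow$(f), note that under the pairing $\langle c,a\rangle = \sum_k c_k a_k$ between $\ell^\infty(\bZ)$ and $\ell^1(\bZ)$, the Banach-space adjoint of $P_\Gamma(g)\colon \ell^\infty(\bZ)\to\ell^\infty(\Gamma)$ coincides with $P_\Gamma(g)'\colon \ell^1(\Gamma)\to\ell^1(\bZ)$, and a bounded operator has dense range iff its adjoint is injective. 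For (a)$\Leftrightarrow$(b), I invoke Sj\"ostrand-type inverse-closedness of the algebra of matrices with $\ell^1$-controlled off-diagonal decay, applied to $P_\Lambda(g)^{\ast}P_\Lambda(g)$: the bounded-below property of $P_\Lambda(g)$ on one $\ell^p$ is passed through this algebra via Lemma~\ref{lemma_samp_ops}(a) and an approximate left inverse that stays inside the algebra, yielding boundedness below on every $\ell^p$ simultaneously.

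The real content sits in (b)$\Rightarrow$(c) and (f)$\Rightarrow$(a), both carried out by a compactness argument using weak convergence of integer translates. For (b)$\Rightarrow$(c), fix $\Gamma \in \WZ(\Lambda)$ realized as $\Lambda + k_n \weakconv \Gamma$, take $c \in \ell^\infty(\bZ)$ with $\|c\|_\infty = 1$, and truncate $c$ to a large but fixed window. By integer-shift invariance of $\sisp^\infty(g)$, each $\Lambda + k_n$ is a sampling set with the same constants as $\Lambda$; applying the sampling inequality to shifted coefficients and passing to the limit --- using local convergence of $\Lambda + k_n$ to $\Gamma$ on each bounded interval and the $W$-norm decay of $g$ to kill tails --- transfers the sampling inequality to $\Gamma$. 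For (f)$\Rightarrow$(a), which via (d)$\Leftrightarrow$(e) amounts to ``every weak limit is a uniqueness set $\Rightarrow$ $\Lambda$ is sampling'', I argue by contradiction in the style of Beurling: if $\Lambda$ is not a sampling set for $\sisp^\infty(g)$, then there exist $c_n \in \ell^\infty(\bZ)$ with $\|c_n\|_\infty = 1$ and $\sup_{\lambda \in \Lambda}|\sum_k (c_n)_k g(\lambda-k)| \to 0$. After choosing integers $k_n$ so that the translated coefficients $(c_n)_{\cdot + k_n}$ attain mass bounded away from zero at the origin, and passing to a subsequence, I extract a weak-$\ast$ limit $c \in \ell^\infty(\bZ)\setminus\{0\}$ together with $\Lambda - k_n \weakconv \Gamma \in \WZ(\Lambda)$; the $\ell^1$ off-diagonal decay of $g$ propagates the vanishing of the samples to $P_\Gamma(g) c = 0$, contradicting the injectivity (e).

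The principal obstacle is this final limiting procedure: one must coordinate three different modes of convergence --- weak-$\ast$ convergence of coefficient sequences in $\ell^\infty$, weak convergence of the sampling sets, and pointwise convergence of the sample values --- while simultaneously guaranteeing that the limit coefficient is nonzero. This is precisely where the hypothesis $g \in W_0$ becomes indispensable: the Wiener-amalgam decay yields uniform tail estimates that render truncation to a fixed bounded window harmless in $n$, and the same decay is what puts the pre-Gramian into the Sj\"ostrand algebra used in step two. Once the normalization of the weak-$\ast$ limit and the tail control are both secured, the chain closes and the equivalence of (a)--(f) follows.
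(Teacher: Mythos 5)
Your architecture is sound and two of the three substantive blocks coincide with the paper's proof: the formal chain (c)$\Rightarrow$(d)$\Leftrightarrow$(e)$\Leftrightarrow$(f) is handled exactly as in the paper, and your argument for (f)$\Rightarrow$(a) --- normalizing so that $|c^n_{k_n}|\geq 1/2$, shifting, extracting a $\sigma(\ell^\infty,\ell^1)$ limit $d\neq 0$ of the coefficients together with a weak limit $\Gamma$ of $\Lambda-k_n$, and using the $W_0$-decay of $g$ to pass the vanishing of the samples to the limit --- is precisely the paper's proof of (d)$\Rightarrow$(b). The genuine gap is in (b)$\Rightarrow$(c). You propose to transfer the $\ell^\infty$ sampling inequality directly: apply it to $\Lambda+k_n$, locate a near-extremal sample $\mu_n$, and push it to $\Gamma$ by local weak convergence. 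But weak convergence of $\Lambda+k_n$ to $\Gamma$ is purely local, and a point $\mu_n\in\Lambda+k_n$ with $|f(\mu_n)|\geq A\|f\|_\infty-1/n$ may escape to infinity, where $\Lambda+k_n$ bears no relation to $\Gamma$. Truncating $c$ to a window $[x_0-M,x_0+M]$ does not repair this: the truncated function $f^M$ is non-negligible on a set of the form $[x_0-M-L,x_0+M+L]$, whereas $f\approx f^M$ only on an inner region such as $[x_0-M/2,x_0+M/2]$, so the near-extremal sample of $f^M$ may sit exactly where $f^M$ no longer approximates $f$. The quantity $\sup_{\mu\in\Lambda+k_n}|f(\mu)|$ is only lower semicontinuous in the relevant local sense, and your sketch does not close this.

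The paper avoids the problem by dualizing this implication as well. From boundedness below of $P_\Lambda(g)$ on $\ell^\infty$ it deduces, via the closed range theorem, that $P_\Lambda(g)':\ell^1(\Lambda)\to\ell^1(\bZ)$ is onto with an open-mapping bound $\|a\|_1\lesssim\|c\|_1$; the preimages of the shifted data, viewed as uniformly bounded measures $\mu_n=\sum_\nu b^n_\nu\delta_\nu$ in $C_0^*(\bR)$, admit a weak$^*$ limit supported on $\Gamma$, and the constraint $c_k=\int g(x-k)\,d\mu_n(x)$ survives the limit because it is tested against the $C_0$ functions $g(\cdot-k)$; in particular mass cannot silently escape to infinity without violating this identity. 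This yields surjectivity of $P_\Gamma(g)'$ and hence the full sampling inequality for $\Gamma$. Separately, your route to (a)$\Leftrightarrow$(b) via inverse-closedness applied to $P_\Lambda(g)^*P_\Lambda(g)$ only starts cleanly from $p_0=2$: boundedness below on $\ell^{p_0}$ with $p_0\ne 2$ does not give invertibility of $P^*P$ on $\ell^2$. Since (a) allows an arbitrary $p$, you need the full strength of the slanted-matrix result the paper quotes (\cite[Proposition~8.1]{grorro15}), not just the Gram-matrix trick.
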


The proof of Theorem \ref{thm:linfinity} relies on the theory of
localized frames and combines facts about spectral invariance with
manipulations with weak limits. As we have proved a similar
characterization without inequalities for  Gabor frames in ~\cite[Section 5]{grorro15}, we postpone the proof to  the Appendix.
\begin{remark}
{\normalfont
\mbox{}

(i) Theorem~\ref{thm:linfinity} says that instead of proving an inequality of
type~\eqref{eq:ch2} or the left-invertibility of the matrix $P_\Lambda (g)$,
it suffices to verify that $P_\Gamma (g)$ has trivial kernel on the
larger space $\ell ^\infty (\bZ )$ for every $\Gamma\in W_\bZ(\Lambda)$. This is
conceptually easier.

(ii)  The $\ell^\infty$-injectivity in
(d) cannot be replaced by $\ell^2$-injectivity.
This is in accordance with Theorem \ref{thm:JiaMicchelli} where $\ell^p$-injectivity of $c\mapsto \ctilg$
is a much weaker condition for $p<\infty$. Also note that the
equivalence can hold only for $g\in W_0$, otherwise
some of the conditions do not even make sense.

(iii) We emphasize that  Theorem \ref{thm:linfinity} does not
  say that every uniqueness
set for $\sisp^\infty(g)$ is a sampling set. See Remark
\ref{rem_non_sep} below for a concrete example.

(iv) The exact analog of Theorem \ref{thm:linfinity} is not valid for the Paley-Wiener space, because the sampling problems associated with different $L^p$-norms are not equivalent. In our case, the assumption $g \in W_0$ allows us to invoke a matrix variant of Wiener's $1/f$ lemma due to Sj\"ostrand \cite{sj95}.
}
\end{remark}

\subsection{Characterization of Gabor frames}
Although not needed for the main results, we formulate a new characterization of Gabor frames.
In the formulation we again need weak limits of sequences of sets.
For a set $\Lambda \subseteq \bR$, we define  $\W(\Lambda)$ as the set
of weak limits of translates $\Lambda +x$, i.e., $\Gamma \in W(\Lambda
)$, \fif\ there exists a
sequence of real numbers $\{ x_n : n\geq 1\} \subseteq \bR $, such
that $\Lambda + x_n \weakconv \Gamma$. We note that
$W(\Lambda ) = \bigcup_{x \in [0,1]} \WZ(\Lambda+x)$.

\begin{tm}\label{thm:RSLambdaNEW}
Assume that  $g\in W_0$ has  stable integer shifts and that
$\Lambda\subseteq \bR$ is  relatively separated.
The following are equivalent.
\begin{itemize}
\item[(a)]
The family $\cG(g,(-\Lambda) \times \bZ)$  is a frame for $L^2(\bR)$.

\item[(b)]
 For every $\Gamma \in \W(\Lambda)$, the matrix
$P_\Gamma (g)=(g(\gamma-k))_{\gamma \in \Gamma,~k\in\bZ}$
defines a bounded injective operator from $\ell^\infty(\bZ)$
to $\ell^\infty(\Gamma)$.
\item[(c)] For every $\Gamma \in \W(\Lambda)$, the matrix
$P_\Gamma(g) ^\prime=(g(\gamma-k))_{k\in\bZ,~\gamma \in \Gamma}$
defines an operator  $\ell^1(\Gamma)\to \ell^1(\bZ)$ with dense range.
\end{itemize}
\end{tm}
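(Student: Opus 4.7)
The plan is to deduce this characterization as a direct consequence of the two preceding results in this section. Theorem \ref{thm:RSLambda} recasts the frame property of $\cG(g,(-\Lambda)\times\bZ)$ as a family of sampling problems in $\sisp^2(g)$ parametrized by $x\in[0,1)$, while Theorem \ref{thm:linfinity} characterizes sampling in $\sisp^2(g)$ by $\ell^\infty$-injectivity (respectively $\ell^1$-dense range) of the pre-Gramian on every integer weak limit of the sampling set. The two results are glued together by the identity $\W(\Lambda)=\bigcup_{x\in[0,1]}\WZ(\Lambda+x)$ already recorded in the paper.

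More precisely, I would first invoke the equivalence $(a)\Leftrightarrow(b)$ of Theorem \ref{thm:RSLambda} to replace (a) of Theorem \ref{thm:RSLambdaNEW} by the statement that for every $x\in[0,1)$, the translate $\Lambda+x$ is a sampling set of $\sisp^2(g)$. For each fixed $x$, $\Lambda+x$ is again relatively separated with the same $\no$, so Theorem \ref{thm:linfinity} applies verbatim to $\Lambda+x$. Its equivalence $(a)\Leftrightarrow(e)$ then says that $\Lambda+x$ is sampling for $\sisp^2(g)$ if and only if $P_\Gamma(g)$ is bounded and injective on $\ell^\infty(\bZ)$ for every $\Gamma\in\WZ(\Lambda+x)$; combining this with the quantifier over $x$ and the identity above, one obtains exactly condition (b). Using $(a)\Leftrightarrow(f)$ of Theorem \ref{thm:linfinity} instead of $(a)\Leftrightarrow(e)$, the same argument gives the equivalence of (a) and (c).

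The one nontrivial step that needs explicit verification is the set identity $\W(\Lambda)=\bigcup_{x\in[0,1]}\WZ(\Lambda+x)$, which is what allows the two quantifications (first over $x\in[0,1)$, then over $\Gamma\in\WZ(\Lambda+x)$) to be compressed into a single quantification over $\W(\Lambda)$. This is a standard subsequence argument: if $\Lambda+t_n\weakconv\Gamma$ for some $t_n\in\bR$, write $t_n=k_n+y_n$ with $k_n\in\bZ$ and $y_n\in[0,1)$; after passing to a subsequence one may assume $y_n\to y\in[0,1]$, and then $\Lambda+t_n=(\Lambda+y_n)+k_n$ converges weakly to an element of $\WZ(\Lambda+y)$, while the reverse inclusion is immediate since integer translates are real translates. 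This is the only point where I anticipate any bookkeeping, but no new analytic input beyond the stability of weak limits under small perturbations; once it is in place, Theorem \ref{thm:RSLambdaNEW} becomes a direct corollary of Theorems \ref{thm:RSLambda} and \ref{thm:linfinity}.
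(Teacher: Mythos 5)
Your proposal is correct and follows exactly the route the paper intends: the paper states that Theorem \ref{thm:RSLambdaNEW} is a direct consequence of Theorems \ref{thm:RSLambda} and \ref{thm:linfinity}, glued by the identity $\W(\Lambda)=\bigcup_{x\in[0,1]}\WZ(\Lambda+x)$, which is precisely your argument (your subsequence verification of that identity, including the stability of weak limits under the perturbation $y_n\to y$, is the right bookkeeping). No gaps.
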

 Theorem~\ref{thm:RSLambdaNEW} is a direct consequence of Theorems
 \ref{thm:RSLambda} and \ref{thm:linfinity}.

\section{Nonuniform sampling and Gabor families with Gaussian generators}
\label{sec_samp_gauss}

Let $\varphi(x)=e^{-c x^2}$, $c>0$, be a scaled centered Gaussian.
Lyubarskii~\cite{lyub92} and Seip \cite{seip92} proved the following.

\begin{tm}[Lyubarskii, Seip]
\label{th_ly_seip}
Let $\otherset \subseteq \bR^2$ be separated. Then

(a) $\cG(\varphi,\otherset):= \{M_{\xi_2} T_{\xi_1} \varphi : (\xi_1,\xi_2)\in \otherset\}$
is a frame for $L^2(\bR)$ if and only if
\begin{align*}
  D_2^-(\otherset) := \mathop{{\rm liminf}}_{R\to\infty}\inf_{z\in\bR^2}
  \frac{\# (\otherset\cap (z + [-R,R]^2))}{4R^2} >1,
\end{align*}
and

(b) $\cG(\varphi,\otherset)$ is a Riesz sequence if and only if
\begin{align*}
  D_2^+(\otherset) := \mathop{{\rm limsup}}_{R\to\infty}\sup_{z\in\bR^2}
  \frac{\# (\otherset\cap (z + [-R,R]^2))}{4R^2} <1.
\end{align*}
\end{tm}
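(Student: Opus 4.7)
The plan is to reduce the Gabor problem to a sampling / interpolation problem in the Bargmann--Fock space via the Bargmann transform, and then invoke the density characterizations for that space.

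First I would use a dilation to reduce to $c=\pi$ (which rescales $\otherset$ in a density-preserving way) and introduce the Bargmann transform $B\colon \lr \to \cF^2$, where $\cF^2$ is the Hilbert space of entire functions with $\|F\|_{\cF^2}^2 := \int_\bC |F(z)|^2 e^{-\pi|z|^2}\,dA(z)<\infty$. The transform is unitary, and a direct computation yields
\begin{equation*}
|\langle f, M_{\xi_2} T_{\xi_1} \varphi\rangle| \;=\; |Bf(\bar z)|\, e^{-\pi |z|^2/2}, \qquad z = \xi_1 + i\xi_2 .
\end{equation*}
Therefore $\cG(\varphi,\otherset)$ is a frame for $\lr$ iff the conjugate set $\overline{\otherset}$ is a sampling sequence for $\cF^2$, and a Riesz sequence iff $\overline{\otherset}$ is an interpolating sequence for $\cF^2$. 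Since complex conjugation preserves Beurling density, both statements of the theorem reduce to the density characterizations of sampling and interpolating sequences in $\cF^2$.

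The technical heart of both Fock-space results is the construction of a canonical entire function $\sigma_\otherset$ vanishing exactly on $\otherset$ and satisfying the sharp two-sided bound $|\sigma_\otherset(z)|\,e^{-\pi|z|^2/2} \asymp \mathrm{dist}(z,\otherset)$ uniformly in $z\in\bC$. Given such a $\sigma_\otherset$, the sufficiency of $D_2^->1$ for sampling follows from the reconstruction formula
\begin{equation*}
F(w) \;=\; \sum_{z\in\otherset} F(z)\,\frac{\sigma_\otherset(w)}{\sigma_\otherset'(z)(w-z)},
\end{equation*}
together with Plancherel--P\'olya type estimates that upgrade the pointwise bound on $\sigma_\otherset$ to an $\ell^2$--$\cF^2$ norm equivalence. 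The $D_2^+<1$ interpolation result is obtained dually by using the same quotient kernel to produce an explicit interpolant. The necessary directions I would deduce from Landau-type concentration arguments applied to the Toeplitz operators with indicator symbols $\chi_{B_R(w)}$ on $\cF^2$: comparing traces with operator norms of the appropriately localized frame / Riesz operators forces the density thresholds $D_2^-\ge 1$ and $D_2^+\le 1$.

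The principal obstacle, and the technical heart of \cite{lyub92,seip92}, is the construction of $\sigma_\otherset$ for an arbitrary separated $\otherset$ constrained only by its Beurling density. One must arrange a Weierstrass product so that the exponential regularizing factors compensate the Gaussian weight $e^{-\pi|z|^2/2}$ precisely, neither diverging nor leaving slack; this matching is possible exactly when the density hypothesis equals the critical value $1$. For lattices this is the classical Jacobi--Weierstrass $\sigma$-function and the estimate reduces to a theta-function identity, but for an irregular set one needs delicate uniform control of an infinite product whose exponents encode the local distribution of $\otherset$. Once $\sigma_\otherset$ and its two-sided bound are in hand, the remaining arguments are routine manipulations with the reproducing kernel of $\cF^2$ and duality between frames and Riesz sequences.
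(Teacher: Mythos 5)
A preliminary remark: the paper itself offers no proof of Theorem~\ref{th_ly_seip}; it is imported as a black box from \cite{lyub92,seip92}, and the only fragment of this circle of ideas the paper actually proves is Theorem~\ref{tm_uniq}, whose proof consists of your first reduction (Bargmann transform, turning the problem into one about entire functions $F$ with $\sup_z |F(z)|e^{-\pi|z|^2/2}<\infty$) followed by Jensen's formula. Your opening reductions are correct and standard: frame $\Leftrightarrow$ sampling for $\cF^2$, Riesz sequence $\Leftrightarrow$ interpolation for $\cF^2$, conjugation preserves density; and a Landau-type concentration-operator argument is a legitimate route to the necessary directions.

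The genuine gap is in what you call the technical heart. An entire function $\sigma_\otherset$ vanishing exactly on $\otherset$ and satisfying the two-sided bound $|\sigma_\otherset(z)|e^{-\pi|z|^2/2}\asymp \mathrm{dist}(z,\otherset)$ \emph{cannot exist} when $D_2^-(\otherset)>1$, which is precisely the regime needed for the sufficiency in (a): the upper bound gives $|\sigma_\otherset(z)|\lesssim (1+|z|)\,e^{\pi|z|^2/2}$, and Jensen's formula (exactly the computation in the paper's proof of Theorem~\ref{tm_uniq}) then forces the zero set of $\sigma_\otherset$ to have lower density at most $1$. You in fact acknowledge that the matching of Weierstrass factors against the Gaussian weight is possible only at the critical density $1$, which is inconsistent with feeding a set of density $>1$ into the interpolation formula $F(w)=\sum_{z}F(z)\,\sigma_\otherset(w)/(\sigma_\otherset'(z)(w-z))$; so the central object of your argument is vacuous in case (a). The published proofs avoid this: Seip's sufficiency argument for sampling runs through Beurling's weak-limit machinery (sampling is reduced to uniqueness, for every weak limit of $\otherset$, in a larger $L^\infty$-type Fock space, and that uniqueness is the Jensen growth-versus-zeros comparison --- this is the very scheme the paper abstracts in Theorem~\ref{thm:linfinity}), while Lyubarskii constructs sigma-type generating functions only for auxiliary lattice-like sets of \emph{critical} density and uses them as comparison objects. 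For part (b) your quotient-kernel interpolant is essentially the right idea, but the generating function there is attached to an auxiliary completed set of critical density containing $\otherset$, not to $\otherset$ itself; as written, the construction is misattributed in (b) and impossible in (a).
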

\begin{remark}
\label{rem_non_sep}

\normalfont{
While a  set $\Omega $  generating a Gabor  Riesz sequence is necessarily separated,
a set generating a  frame may be only relatively separated, and  Theorem \ref{th_ly_seip} (a)
 does not directly apply to non-separated sets. The exact
characterization is the following: $\cG(\varphi ,\otherset)$ is a frame
of $L^2(\bR)$ if and only if $\otherset$ is relatively separated and  contains a separated subset of density
$>1$.

To see the relevance of this subtle distinction,  we  consider the set
$\otherset = \{ (k,j), (k+2^{-|k|},j): k,j \in \bZ\}$. Then clearly
$\Omega $ is relatively separated and
$D^{-}(\otherset) =2$,  but $\cG(\varphi ,\otherset)$ is \emph{not} a frame of
$L^2(\bR)$. Indeed,
if $\cG(\varphi ,\otherset)$ were  a frame of $L^2(\bR)$, then by \cite[Theorem
5.1]{grorro15} (equivalence of (i) and  (v)) $\cG(\varphi ,\otherset')$ would be a frame  for every
$\otherset ' \in
\W(\otherset)$. However,
it is easy to see that $\bZ^2 \in \W(\otherset)$ and it is well-known
that $\cG(\varphi ,\bZ^2)$ is not a frame~\cite{bhw95,heil07}.
}
\end{remark}

In spite of the previous remark, we have the following result.
\begin{tm}
\label{tm_uniq}
Let $\otherset \subseteq \bR^2$ be an arbitrary set with $D^{-}(\otherset)>1$. Then
$\cG(\varphi ,\otherset)$ spans a dense subspace of $W_0(\bR)$.
\end{tm}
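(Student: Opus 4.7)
The plan is to use Hahn--Banach duality: it suffices to prove that every $\ell \in W_0(\bR)^*$ annihilating $\cG(\varphi,\otherset)$ is the zero functional. Given such an $\ell$, set
$$
  F(x,\xi):=\ell(M_\xi T_x\varphi),\qquad (x,\xi)\in\bR^2.
$$
This function is continuous and bounded on $\bR^2$, because $(x,\xi)\mapsto M_\xi T_x\varphi$ is continuous into $W_0$ and $\|M_\xi T_x\varphi\|_{W_0}$ is uniformly bounded in $(x,\xi)$ (modulation-invariance and translation quasi-invariance of the amalgam norm of a rapidly decaying function). By hypothesis $F\equiv 0$ on $\otherset$.

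The key step is to complexify using the Gaussian structure of $\varphi$. For $z\in\bC$ the family $\psi_z(t):=e^{-c(t-z)^2}$ is entire as a $W_0$-valued function of $z$ with the uniform bound $\|\psi_z\|_{W_0}\leq C_\varphi\, e^{c(\mathrm{Im}\,z)^2}$, and completing the square in the exponent shows that $\psi_{x+iy}$ coincides, up to an explicit non-vanishing complex Gaussian factor, with $M_{\xi(y)} T_x\varphi$ under an affine identification $\bR^2\simeq \bC$. Consequently the entire function $G(z):=\ell(\psi_z)$ satisfies $|G(z)|\lesssim e^{c(\mathrm{Im}\,z)^2}$, hence belongs to an $L^\infty$-type Bargmann--Fock space, and vanishes on the image of $\otherset$ in $\bC$.

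I would then extract a separated subset $\otherset'\subseteq \otherset$ with $D^-(\otherset')>1$ by a standard grid-thinning argument (subdivide $\bR^2$ into small cells and retain at most one point of $\otherset$ per cell, checking that the density is preserved), and invoke the $L^\infty$-variant of the Lyubarskii--Seip sampling theorem for Bargmann--Fock space: any separated set of lower Beurling density strictly greater than the critical value is a uniqueness set. The affine identification $\bR^2\simeq\bC$ preserves separation and, with the correct normalization dictated by $c>0$, matches the density threshold with~$1$. Therefore $G\equiv 0$, and hence $F\equiv 0$ on all of $\bR^2$. At this point $\ell$ annihilates the full continuous Gabor orbit $\{M_\xi T_x\varphi:(x,\xi)\in\bR^2\}$; to deduce $\ell\equiv 0$, use that this orbit is total in $W_0(\bR)$: Feichtinger's algebra $S_0(\bR)$ is densely contained in $W_0(\bR)$ and every $f\in S_0$ admits a norm-convergent continuous Gabor expansion
$$
  f=\|\varphi\|_2^{-2}\int_{\bR^2} V_\varphi f(x,\xi)\,M_\xi T_x \varphi\,dx\,d\xi,
$$
so interchanging $\ell$ with this Bochner integral yields $\ell(f)=0$ for every $f\in S_0$, hence $\ell \equiv 0$.

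The main obstacle, in my view, is the Bargmann--Fock bookkeeping in the middle step: the affine identification and the weight must be chosen so that the condition ``$D^-(\otherset)>1$'' on the time--frequency side becomes exactly the critical Fock-space density condition, and one needs the $L^\infty$-version of Lyubarskii--Seip rather than the $L^2$-statement of Theorem~\ref{th_ly_seip}. The sharpening from $\cF^2$ to $\cF^\infty$ is standard in the Fock-sampling literature (the density bound for zero sets is the same), but the density normalizations must be matched carefully to the specific constant $c$ appearing in $\varphi(t)=e^{-ct^2}$.
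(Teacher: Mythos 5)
Your overall architecture (Hahn--Banach duality, complexification of the Gabor orbit of the Gaussian into an entire function of controlled growth, reduction to a uniqueness statement in a Fock-type space) is the same as the paper's, and the bookkeeping issues you flag at the end (matching the weight to $c$, passing from $\cF^2$ to $\cF^\infty$) are indeed only bookkeeping. However, there is one genuine gap, and it sits exactly at the point the theorem is designed to handle: the extraction of a separated subset $\otherset'\subseteq\otherset$ with $D^-(\otherset')>1$. This is \emph{not} possible in general, and no grid-thinning argument can preserve the density. The paper's own Remark~\ref{rem_non_sep} gives the counterexample $\otherset=\{(k,j),\,(k+2^{-|k|},j):k,j\in\bZ\}$, which has $D^-(\otherset)=2$; any separated subset with separation constant $\delta$ must eventually (for $2^{-|k|}<\delta$) drop one point of each close pair, and hence has lower density at most $1$. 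Since $L^\infty$-uniqueness in Fock space fails at the critical density (the square lattice is the zero set of a Weierstrass $\sigma$-function with the right growth), your argument cannot be closed for such sets --- yet the theorem is true for them, and the paper explicitly emphasizes that validity for non-separated sets is the essential technical point and the reason a self-contained proof is supplied.

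The repair is to avoid any separated-set reduction and instead run the growth-versus-zero-count argument directly: if the entire function $F$ (normalized so that $\|F\|_{\cF^\infty}=1$ and $F(0)\neq 0$) vanishes on the image of $\otherset$ in $\bC$, then the zero-counting function satisfies $N(r)\geq \#(\overline{\otherset}\cap D_r)\geq \nu\pi r^2$ for large $r$ and some $\nu>1$, because distinct points of $\otherset$ are distinct zeros regardless of how closely they cluster. Jensen's formula then gives
\begin{align*}
\nu\,\frac{\pi}{2}r^2-C\;\leq\;\int_0^r\frac{N(t)}{t}\,dt\;\leq\;\frac{\pi}{2}r^2-\log|F(0)|,
\end{align*}
a contradiction for large $r$. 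This is exactly the paper's route; it replaces your appeal to the Lyubarskii--Seip sampling/uniqueness theorem for separated sets by an elementary counting argument that is insensitive to separation. Your concluding step (totality of the continuous Gabor orbit in $W_0$ via the $S_0$ inversion formula) is a valid, slightly different way to pass from $F\equiv 0$ to $\ell\equiv 0$; the paper instead uses injectivity of the Bargmann transform on $W_0^*$.
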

The proof of Theorem \ref{tm_uniq} uses (i) the connection between Gabor
systems with a Gaussian generator and sampling in the Bargmann-Fock
space of entire functions, provided by the Bargmann transform \cite{ba61, dagr88},
and (ii) a complex analysis argument that compares growth and density
of zeros
that goes back to Beurling \cite{be66,be89}, and in the form required here was given by
Brekke and Seip~\cite{BS93}. Since the claim that the result is valid for possibly non-separated sets is an essential technical point not explicitly stated in the references, we sketch a
self-contained argument in the Appendix.

We have now assembled all tools to prove a general result for the shift-invariant spaces $V^p(\varphi)$ in the style of
Beurling. As a consequence of Theorem \ref{thm:RSLambda}, we obtain the following statement, which implies
Theorem \ref{th_intro_interp} (announced in the Introduction).

\begin{tm}\label{cor:Gauss}
 Let $\varphi(x)=e^{-cx^2}$  with $c>0$.
\begin{itemize}
 \item[(a)] If  $\Lambda\subseteq \bR$ has density  $D^-(\Lambda)>1$, then
 $\Lambda$ is a uniqueness set for $\sisp^\infty(\varphi )$.
\item[(b)] If $\Lambda$ is separated and $D^-(\Lambda)>1$, then $\Lambda$ is a sampling
set for $\sisp^p(\varphi)$ for $1\leq p\leq \infty $.
\item[(c)] If $\Lambda$ is separated and $D^+(\Lambda)<1$, then $\Lambda$ is an interpolating
set for $\sisp^2(\varphi)$.
\end{itemize}
\end{tm}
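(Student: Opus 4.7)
The plan is to translate each of the three statements about $\sisp^p(\varphi)$ into a statement about the semi-regular Gabor system $\cG(\varphi, (-\Lambda) \times \bZ)$ and then invoke the Bargmann-Fock theorems of Lyubarskii and Seip. The bridge is supplied by Theorems \ref{thm_uni_sis}, \ref{thm:linfinity} and \ref{th_interp}, together with the elementary identities $D_2^{-}((-\Lambda) \times \bZ) = D^{-}(\Lambda)$ and $D_2^{+}((-\Lambda) \times \bZ) = D^{+}(\Lambda)$, which convert a hypothesis on $\Lambda$ into one on the two-dimensional set. For part (a), the density bound $D^-(\Lambda) > 1$ gives $D_2^-((-\Lambda) \times \bZ) > 1$, so Theorem \ref{tm_uniq} asserts that $\cG(\varphi, (-\Lambda) \times \bZ)$ spans a dense subspace of $W_0(\bR)$. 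Applying Theorem \ref{thm_uni_sis} with $g = \varphi$ and $x = 0$ then immediately yields that $\Lambda$ is a uniqueness set for $\sisp^\infty(\varphi)$.

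For part (b), I would appeal to the implication (d) $\Rightarrow$ (a), (b) of Theorem \ref{thm:linfinity}: the separated set $\Lambda$ is a sampling set for $\sisp^p(\varphi)$ for every $p \in [1, \infty]$ as soon as every weak limit $\Gamma \in \WZ(\Lambda)$ of integer translates of $\Lambda$ is a uniqueness set for $\sisp^\infty(\varphi)$. Since the lower Beurling density is lower semi-continuous under weak limits of relatively separated sets, $D^-(\Gamma) \geq D^-(\Lambda) > 1$ for every such $\Gamma$; part (a) applied to each $\Gamma$ then delivers the required uniqueness.

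For part (c), separation of $\Lambda$ implies that $(-\Lambda) \times \bZ$ is separated in $\bR^2$, and the identity $D_2^+((-\Lambda) \times \bZ) = D^+(\Lambda) < 1$ places us in the setting of Theorem \ref{th_ly_seip} (b), which gives that $\cG(\varphi, (-\Lambda) \times \bZ)$ is a Riesz sequence in $L^2(\bR)$. By the equivalence (a) $\Leftrightarrow$ (b) of Theorem \ref{th_interp}, this means that $\Lambda + x$ is an interpolating set for $\sisp^2(\varphi)$ for every $x \in [0,1)$; specializing to $x = 0$ yields the claim.

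The whole argument is essentially a diagram chase through the machinery of Sections \ref{sec_samp1}--\ref{sec_wl}; all the analytic substance is hidden in Theorems \ref{tm_uniq} and \ref{th_ly_seip}, whose proofs rest on the Bargmann transform and the Brekke--Seip comparison between growth and density of zeros. The only mild technical point that needs to be verified along the way is the lower semi-continuity $D^-(\Gamma) \geq D^-(\Lambda)$ for $\Gamma \in \WZ(\Lambda)$, which is what bootstraps the pointwise uniqueness statement of part (a) into the uniform sampling statement of part (b); this is a standard fact about weak limits of translates of relatively separated sets, so I expect it to pose no real obstacle.
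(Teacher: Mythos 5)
Your proposal is correct, and parts (a) and (c) coincide exactly with the paper's argument: density transfer to $(-\Lambda)\times\bZ$, then Theorem \ref{tm_uniq} plus Theorem \ref{thm_uni_sis} for (a), and Theorem \ref{th_ly_seip}(b) plus Theorem \ref{th_interp} for (c). For part (b), however, you take a genuinely different route. The paper applies the Lyubarskii--Seip frame theorem (Theorem \ref{th_ly_seip}(a)) to $(-\Lambda)\times\bZ$ and then converts the Gabor frame property into the sampling property via the duality Theorem \ref{thm:RSLambda}; you instead bootstrap part (a) through the ``without inequalities'' characterization, verifying condition (d) of Theorem \ref{thm:linfinity} for every $\Gamma\in\WZ(\Lambda)$ using $D^-(\Gamma)\geq D^-(\Lambda)>1$. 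Both arguments are valid, and your route is in fact the one the paper deploys later for the harder Theorem \ref{thm:SamplingSet}(b), where the zero set $N_\Phi$ need not be separated and the frame-theoretic shortcut is unavailable; it also delivers the full range $1\le p\le\infty$ in one stroke, whereas the paper's proof of (b) explicitly produces only the $p=2$ case and leaves the extension to other $p$ implicit. One small correction: the monotonicity $D^-(\Gamma)\geq D^-(\Lambda)$ under weak limits is \emph{not} a fact about relatively separated sets, as you suggest in your closing remark --- it fails for merely relatively separated $\Lambda$ (see Remark \ref{rem_mu_sep}) and genuinely requires the separation hypothesis of Lemma \ref{lemma_dens_lim}, which part (b) fortunately provides.
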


\begin{proof}
(a) Assume that $D^-(\Lambda)>1$. Then
the set $\otherset=(-\Lambda)\times \bZ\subseteq \bR^2$ has lower
Beurling density
\begin{align*}
    D_2^-((-\Lambda)\times \bZ) = D^-(-\Lambda) = D^-(\Lambda) >1
\end{align*}
(where as before $D_2^-$ is the Beurling density of a subset in $\bR^2$, and
$D^-$  is the Beurling density of a subset in $\bR$).

By Theorem \ref{tm_uniq}, $\cG(\varphi, (-\Lambda ) \times \bZ)$ spans a  dense
subspace of  $W_0(\bR)$. Theorem~\ref{thm_uni_sis} now implies that
$x+\Lambda $ is a uniqueness set for $V^\infty (\varphi )$ for all $x\in
\bR $.

(b) If $D^-(\Lambda)>1$ and, in addition,
$\Lambda$ is separated, then,  by Theorem \ref{th_ly_seip},  $\cG(\varphi,(-\Lambda ) \times \bZ)$ is a frame
for $L^2(\bR)$. Hence $\Lambda $ is a sampling set for $\sisp^2(\varphi )$ by
Theorem~\ref{thm:RSLambda}.

Part (c) follows similarly, this time we invoke Theorem \ref{th_interp}.
\end{proof}

\begin{remark}
\normalfont{
(i) The functions in $V^2(\varphi)$ and $V^\infty (\varphi )$ are
certain entire functions of order $2$. It would be interesting to find
a direct proof without the detour to Gabor frames in Theorem~\ref{tm_uniq}.

(ii)
The  critical  case $\Lambda= x+ \bZ$ (hence $D^-(\Lambda)=1$) for
arbitrary generator $g\in W_0$ is
easy to understand with characterization (d) of
Theorem~\ref{thm:RSLambda}.
The quasi-periodicity of the Zak transform implies that $Zg(x+k,\xi) =
e^{-2\pi i k\xi } Zg(x,\xi )$. It follows that $x+\Lambda $ is a
sampling set, \fif\ the set $\{e^{2\pi i k\xi } Zg(x,\xi ): k\in \bZ
  \}$ is a frame for $L^2(\bT, d\xi )$. This is the case, \fif\ $Zg(x,\xi )
  \neq 0$ for all $\xi \in\bR $. However, since $g\in W_0$ has a
  continuous Zak transform, there exists at least one point $(x_0,\xi_0)$ such that $Zg(x_0,\xi _0)=0$ (see e.g.
\cite{bhw95}).
Consequently, $x_0+\Lambda $ is not a sampling set. This case was investigated and understood early on by
Janssen~\cite{Jan93}, Walter~\cite{walter94}, and  Baxter and
Sivakumar ~\cite{BaxterSiva1996}.
}
\end{remark}

\section{Nonuniform sampling with totally positive generator}
\label{sec_samp_tp}

 Next we study the sampling problem in \sis s with a totally positive
 function of Gaussian type as a generator. As explained in the
 introduction, these are all (totally positive) functions $g$ whose Fourier
 transform factors as
\begin{equation}\label{eq:ch1a}
  \hat g(\xi)= \prod_{j=1}^n (1+2\pi i\delta_j\xi)^{-1} \, e^{-c \xi
    ^2},\qquad \delta_1,\ldots,\delta_n\in\bR, c >0 \, .
\end{equation}
By Schoenberg's characterization~\cite{Schoenb1947a}, every $g$ of this form
is totally positive, and every totally positive function
in $L^1(\bR)$ is a limit of \tp\  functions of Gaussian type.
In addition, we note that
$g$ has stable integer shifts. This can be easily
verified using condition (d) of Theorem \ref{thm:JiaMicchelli} and the
fact that the Fourier transform of $g$ in \eqref{eq:ch1a}  has no real zeros.

Next, we denote the zero set of a continuous function $f$ by
\begin{align*}
N_f:=\{x\in \bR: f(x)=0\} \, .
\end{align*}
The identity operator is denoted by $I$ as usual.

The following lemma is the crucial insight that allows us to go from
the Gaussian  to totally  positive functions of Gaussian type as
generators.

\begin{lemma}\label{Rolle}
 Let $f\in C^1(\bR)$ be real-valued. For $a\in \bR , a \neq 0,$ let
$g=\left(I+a\frac{d}{dx}\right)f$.  Then
\begin{align*}
    D^-(N_{g})\ge D^-(N_f).
\end{align*}
\end{lemma}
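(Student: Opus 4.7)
The plan is to reduce the statement to Rolle's theorem by absorbing the operator $I + a\frac{d}{dx}$ into an integrating factor. Set
\[
F(x) := e^{x/a} f(x).
\]
Then $F \in C^1(\bR)$ is real-valued and
\[
F'(x) = \frac{1}{a} e^{x/a}\bigl(f(x) + a f'(x)\bigr) = \frac{1}{a} e^{x/a}\, g(x).
\]
Since the exponential never vanishes, this gives the crucial identifications $N_F = N_f$ and $N_{F'} = N_g$.

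Now the standard Rolle argument applies. If $\lambda_1 < \lambda_2 < \ldots < \lambda_n$ are the elements of $N_f \cap [x-R, x+R]$, then $F(\lambda_j) = F(\lambda_{j+1}) = 0$, so Rolle's theorem produces points $c_j \in (\lambda_j, \lambda_{j+1})$ with $F'(c_j) = 0$, i.e., $c_j \in N_g$. The $c_j$'s lie in pairwise disjoint open intervals, so they are distinct. Hence
\[
\#\bigl(N_g \cap [x-R, x+R]\bigr) \;\geq\; \#\bigl(N_f \cap [x-R, x+R]\bigr) - 1.
\]

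Dividing by $2R$ and taking the infimum over $x \in \bR$, the inequality $\inf_x(A(x)) \geq \inf_x(B(x)) - \frac{1}{2R}$ is preserved, and letting $R \to \infty$ the correction term $\frac{1}{2R}$ vanishes under the $\liminf$. This yields exactly $D^-(N_g) \geq D^-(N_f)$.

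There is no real obstacle here; the only insight required is the observation that multiplying by the integrating factor $e^{x/a}$ converts the first-order operator $I + a\frac{d}{dx}$ into a pure differentiation (up to a nonvanishing factor), which then makes Rolle's theorem directly applicable on the zero set of $f$. A possible technical remark: the argument does not rely on simplicity of the zeros of $f$, since $N_f$ is treated set-theoretically (without multiplicities), and Rolle's theorem produces a zero of $F'$ strictly between any two distinct points of $N_F = N_f$.
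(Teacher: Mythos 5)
Your proof is correct and follows essentially the same route as the paper: the integrating factor $e^{x/a}$ converts $I+a\frac{d}{dx}$ into a pure derivative up to a nonvanishing factor (the paper uses $h(x)=ae^{x/a}f(x)$ instead of your $F$, an immaterial normalization), and Rolle's theorem then yields $\#\bigl(N_g\cap[x-R,x+R]\bigr)\ge \#\bigl(N_f\cap[x-R,x+R]\bigr)-1$, which passes to the lower Beurling density. The one point the paper treats more carefully is that $N_f\cap[x-R,x+R]$ need not be finite (your enumeration $\lambda_1<\dots<\lambda_n$ presupposes this), so one should apply Rolle to arbitrary finite subsets and take a supremum; your argument extends verbatim once this is noted.
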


\begin{proof}
Note that $I + a \tfrac{d}{dx} = a e^{-x/a} \tfrac{d}{dx} e^{x/a}
$. We define  $h\in C^1(\bR)$ by $h(x)=a e^{ x/a}f(x)$ and note that $N_h = N_f$. Furthermore,
 since
$$ g(x)= \left(I+a\frac{d}{dx}\right)f(x)= e^{- x/a}h'(x) ,
$$
we conclude that $N_g=N_{h'}$. It remains to show that $D^-(N_{h'})\ge D^-(N_h)$.

Let $x\in\bR$, $R>0$, and $F \subseteq N_{h}\cap[x-R,x+R]$ a finite subset.
By Rolle's theorem,
$$
     \#(N_{h'}\cap[x-R,x+R]) \ge \# F-1.
$$
Since this holds for every finite subset $F \subseteq
N_{h}\cap[x-R,x+R]$, it follows that $\#(N_{h'}\cap[x-R,x+R]) \ge
\#(N_{h}\cap[x-R,x+R]) -1$. Therefore,
$D^-(N_{h'})\geq  D^-(N_h)$, and
$$
D^-(N_g) =  D^-(N_{h'})\ge D^-(N_h) = D^-(N_f) \, ,
$$
 as claimed.
\end{proof}

We now state  our main results on sampling, which are  an extended form of
Theorems~\ref{tmch1} and ~\ref{tmvardim} in the introduction.

\begin{tm}\label{thm:SamplingSet}
Let $g$ be a totally positive function of Gaussian type.

\begin{itemize}
 \item[(a)]  If  $\Lambda\subseteq \bR$ has density  $D^-(\Lambda)>1$, then
 $\Lambda$ is a uniqueness set for $\sisp^\infty(g )$.
\item[(b)] If $\Lambda$ is separated and $D^-(\Lambda)>1$, then $\Lambda$ is a sampling
set for $\sisp^2(g)$.
\end{itemize}

\end{tm}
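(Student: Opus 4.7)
The strategy is to reduce to the Gaussian case by exploiting the factored form of $\hat g$. Since $\hat g(\xi)=\prod_{j=1}^n(1+2\pi i\delta_j\xi)^{-1}\,e^{-c\xi^2}$, introduce the differential operator $D=\prod_{j=1}^n\bigl(I+\delta_j\tfrac{d}{dx}\bigr)$, whose Fourier multiplier is $\prod_j(1+2\pi i\delta_j\xi)$. Thus $\widehat{Dg}(\xi)=e^{-c\xi^2}=\hat\psi(\xi)$, where $\psi$ is a (scaled) Gaussian, and for any $f=\sum_{k\in\bZ}c_k g(\cdot-k)\in \sisp^\infty(g)$ one has $Df=\sum_{k\in\bZ}c_k\psi(\cdot-k)\in \sisp^\infty(\psi)$ with the \emph{same} coefficient sequence. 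Without loss of generality every $\delta_j\neq 0$, since a vanishing $\delta_j$ contributes the identity factor.

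For part (a), suppose $f\in \sisp^\infty(g)$ vanishes on $\Lambda$. Since $\hat g$ is Hermitian-symmetric, $g$ is real-valued, so we decompose $f=f_1+if_2$ into real and imaginary parts, each in $\sisp^\infty(g)$ and each vanishing on $\Lambda$. Applying Lemma \ref{Rolle} iteratively $n$ times --- at each step the intermediate function $\bigl(I+\delta_{j+1}\tfrac{d}{dx}\bigr)\cdots\bigl(I+\delta_n\tfrac{d}{dx}\bigr)f_1$ is real and $C^1$ because all $\delta_j$ are real and $g$ is smooth --- we obtain $D^-(N_{Df_1})\geq D^-(N_{f_1})\geq D^-(\Lambda)>1$. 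Since $Df_1\in \sisp^\infty(\psi)$, Theorem \ref{cor:Gauss}(a) forces $Df_1\equiv 0$. The stable integer shifts of $\psi$ then imply that the coefficient sequence of $f_1$ vanishes, so $f_1\equiv 0$; symmetrically $f_2\equiv 0$, hence $f\equiv 0$.

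For part (b), invoke the Beurling-type characterization in Theorem \ref{thm:linfinity}: a relatively separated $\Lambda$ is a sampling set for $\sisp^2(g)$ if and only if every $\Gamma\in\WZ(\Lambda)$ is a uniqueness set for $\sisp^\infty(g)$. Each such $\Gamma$ arises as a weak limit $\Lambda+k_n\weakconv\Gamma$ of integer translates of $\Lambda$, all of which share the separation constant of $\Lambda$. Lower-semicontinuity of $D^-$ under weak convergence of uniformly separated sequences then yields $D^-(\Gamma)\geq D^-(\Lambda)>1$, so part (a) applies to $\Gamma$ and delivers the required uniqueness.

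The main obstacle is the careful iteration of Lemma \ref{Rolle}: the lemma requires a real-valued $C^1$ function, so one must verify that each of the $n$ intermediate functions retains both properties --- this depends essentially on $\delta_j\in\bR$ and on the smoothness of $g$. Once part (a) is available, part (b) is essentially bookkeeping with weak limits and Theorem \ref{thm:linfinity}.
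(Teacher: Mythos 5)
Your proposal is correct and follows essentially the same route as the paper: factor out the differential operator $\prod_j(I+\delta_j\tfrac{d}{dx})$ mapping $V^\infty(g)$ to $V^\infty(\varphi)$ coefficient-wise, reduce to the real-valued case, iterate Lemma \ref{Rolle} to control the density of zeros, invoke Theorem \ref{cor:Gauss}(a) and stable shifts for part (a), and combine Theorem \ref{thm:linfinity} with Lemma \ref{lemma_dens_lim} for part (b). The points you flag as needing care (realness and smoothness of the intermediate functions, the case $\delta_j=0$) are handled the same way, implicitly, in the paper's argument.
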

\begin{proof}
Recall that $g$ is real-valued and has stable integers shifts.

(a)
Let $c\in\ell^\infty(\bZ)$ and assume that $f=\ctilg \in \sisp^\infty(g)$
vanishes on $\Lambda$ with $D^-(\Lambda ) >1$. We want to show that $f \equiv 0$.
Note that $f\in C^\infty(\bR)$. Since $g$ is real-valued, we may
assume without loss of generality that  $f$ is also real-valued (by  replacing $c_k$ by $\Re(c_k)$
or $\Im(c_k)$ if necessary).

 The representation of $\hat g$ in \eqref{eq:ch1a} implies  that
\begin{align*}
\prod_{j=1}^n \left(I+\delta_j \frac{d}{dx}\right) g =\varphi \, ,
\end{align*}
where $\widehat{\varphi}(\xi)=e^{-c\xi^2}$. Now set
$$\Phi =\sum_{k\in\bZ} c_k \varphi (\cdot-k)\, .$$
Since $\varphi $, $g$ and their derivatives decay exponentially, we may interchange
summation and differentiation in $f$, and obtain that
$$
\Phi = \prod_{j=1}^n \left(I+\delta_j \frac{d}{dx}\right) f \, .
$$
We are now in a position to apply  Lemma \ref{Rolle} repeatedly and conclude that
$$ D^-(N_{\Phi }) \ge D^-(N_f) \geq D^{-}(\Lambda) >1.$$
By Theorem~\ref{cor:Gauss}, the set $N_{\Phi}$ is a uniqueness set for
$\sisp^\infty(\varphi )$.
Since obviously  $\Phi  \equiv 0$ on $N_{\Phi}$, we conclude that
$\Phi \equiv 0$. Hence $c =  0$ and  therefore  $f \equiv
0$. Thus $\Lambda $ is a uniqueness set for $V^\infty (g)$.

(b) Now assume that $\Lambda$ is separated.
By Theorem \ref{thm:linfinity}, in order to show that
$\Lambda$ is a sampling set for $\sisp^2(g)$, it suffices to show that every $\Gamma \in W_\bZ(\Lambda)$ is a uniqueness set for $\sisp^\infty(g)$. Let $\Gamma \in W_\bZ(\Lambda)$.
Since $\Lambda$ is separated, the weak-limit $\Gamma$ is also separated and it satisfies
$D^{-}(\Gamma) \geq D^-(\Lambda)>1$ (by Lemma \ref{lemma_dens_lim} in the Appendix).
Part (a) now asserts that $\Gamma $ is a uniqueness set for $V^\infty(g)$. Therefore, by Theorem \ref{thm:linfinity},
$\Lambda$ is a sampling set for $\sisp^2(g)$.
\end{proof}
\begin{remark}
\normalfont{
In the above proof, we do not know if the set $N_{\Phi}$ is separated, even if the original set $\Lambda$ is.
Therefore we cannot conclude directly that $N_{\Phi}$ is a sampling
set for $\sisp^2(\varphi )$, but only that $N_\Phi $ is a uniqueness set for
$\sisp^\infty(\varphi )$. In this way, the proof exploits the full strength of Theorems \ref{thm:linfinity}
and \ref{tm_uniq}.}
\end{remark}

Using a clever trick by Janssen and Strohmer~\cite{JS02}, we  obtain
the following generalization of Theorem~\ref{thm:SamplingSet}. Let $g$ be a
totally positive function of Gaussian type and  let $c , d\in \ell ^1(\bZ )$ be two sequences with Fourier
series $\hat c(\xi ) = \sum _{k\in \bZ } c_k e^{2\pi i k\xi }$ and
$\hat d$,   such that
$$
\inf _{\xi \in \bR } |\hat c (\xi )| >0 \quad \text{ and } \inf
_{\xi \in \bR } |\hat d (\xi )|  >0 \, .
$$
Now  set
\begin{equation}
  \label{eq:ch5}
  \gamma = \sum _{k,l \in \bZ } c_k d_l T_k M_l g \, .
\end{equation}

\begin{cor} \label{cortrick}
Let $\gamma $ be defined as in \eqref{eq:ch5} and $\Lambda \subseteq
\bR $ be a separated set with density $D^-(\Lambda ) >1$. Then
$\Lambda $ is a sampling set for $\sisp^2(\gamma )$.

In particular, if $\gamma(x)=\mathrm{sech}(\nu x)  = 2 (e^{\nu x} + e^{-\nu x})\inv $ and
$D^-(\Lambda ) >1$, then $\Lambda $ is  a sampling set for $\sisp^2(\gamma )$.
\end{cor}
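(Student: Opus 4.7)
The plan is to reduce the sampling problem on $V^2(\gamma)$ to the one on $V^2(g)$, which is already settled by Theorem~\ref{thm:SamplingSet}(b). The key structural observation is that $V^2(\gamma)$ is simply a ``twisted'' copy of $V^2(g)$. Since $k,l\in\bZ$, the phase factor $e^{-2\pi ilk}$ is trivial and
\begin{align*}
T_k M_l g(x) \;=\; e^{2\pi ilx}\, g(x-k).
\end{align*}
Performing the $l$-summation first yields
\begin{align*}
\gamma(x) \;=\; \hat d(x) \sum_{k \in \bZ} c_k\, g(x-k),
\end{align*}
where $\hat d$ is continuous and $1$-periodic. Using $\hat d(x-m)=\hat d(x)$ for $m\in\bZ$, every $f=\sum_m a_m\gamma(\cdot-m)\in V^2(\gamma)$ then factors as
\begin{align*}
f(x) \;=\; \hat d(x)\,\phi(x), \qquad \phi(x) \;:=\; \sum_{n \in \bZ} (a\ast c)_n\, g(x-n) \;\in\; V^2(g).
\end{align*}

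Next I would verify that this representation makes $V^2(\gamma)$ fit into the framework of Sections~\ref{sec_samp1}--\ref{sec_wl}. The hypotheses $\inf_\xi|\hat c(\xi)|>0$ and $\inf_\xi|\hat d(\xi)|>0$ are precisely what Wiener's $1/f$ lemma needs to invert, respectively, the convolution by $c$ on $\ell^p(\bZ)$ and the multiplication by $\hat d$ on $L^p(\bR)$. Combined with $g\in W_0$ and $c,d\in\ell^1$, this gives $\gamma\in W_0$, and, via the above factorization together with the stability of the shifts of $g$, the stability of the integer shifts of $\gamma$ (condition (c) of Theorem~\ref{thm:JiaMicchelli}).

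With this in hand, the sampling inequality transfers effortlessly. Because $|\hat d(x)|$ lies in a bounded interval $[m,M]$ with $m>0$, one has $|f(\lambda)|^2\asymp |\phi(\lambda)|^2$ for every $\lambda$ and $\|f\|_2\asymp\|\phi\|_2$. Theorem~\ref{thm:SamplingSet}(b), applied to the totally positive Gaussian-type generator $g$ and to the separated set $\Lambda$ with $D^-(\Lambda)>1$, gives $\sum_\lambda|\phi(\lambda)|^2\asymp\|\phi\|_2^2$. Chaining the equivalences yields $\sum_\lambda|f(\lambda)|^2\asymp\|f\|_2^2$, which is exactly the sampling inequality for $V^2(\gamma)$.

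It remains to handle the special instance $\gamma(x)=\mathrm{sech}(\nu x)$. Here one must produce a concrete representation of $\mathrm{sech}(\nu \cdot)$ in the form \eqref{eq:ch5} with $g$ a Gaussian (the degenerate case $n=0$ of \eqref{eq:ch1a}) and sequences $c,d\in\ell^1(\bZ)$ whose Fourier series are bounded away from zero. This is exactly the content of a theta-function-type identity exploited by Janssen and Strohmer~\cite{JS02}, which I would cite rather than reprove. Once this representation is secured, the general part of the corollary applies directly. The only genuinely delicate point is thus the concrete identity for $\mathrm{sech}$ and the verification of the Wiener-algebra lower bounds on the coefficient sequences; the rest of the argument is essentially bookkeeping around the decomposition $f=\hat d\cdot\phi$.
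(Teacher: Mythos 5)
Your argument is correct, and it rests on the same underlying observation as the paper's proof --- the multiplicative structure that the representation \eqref{eq:ch5} imposes on $V^2(\gamma)$ --- but you execute it on the ``time side'' rather than in the Zak domain. The paper computes $Z\gamma(x,\xi)=\hat c(-\xi)\,\hat d(x)\,Zg(x,\xi)$ and invokes criterion (d) of Theorem~\ref{thm:RSLambda}: since multiplication by $\hat c(-\cdot)$ is an isomorphism of $L^2([0,1])$ and $|\hat d|\asymp 1$, the inequality \eqref{eq:gablambda1} transfers from $Zg$ to $Z\gamma$. Your identity $\gamma=\hat d\cdot\sum_k c_k g(\cdot-k)$, hence $f=\hat d\cdot\phi$ with $\phi=\sum_n(a*c)_n\,g(\cdot-n)\in V^2(g)$ (note $a*c\in\ell^2$ by Young's inequality, so $\phi$ really lies in $V^2(g)$), is exactly that Zak-transform identity read back on the real line, and the chain $\sum_\lambda|f(\lambda)|^2\asymp\sum_\lambda|\phi(\lambda)|^2\gtrsim\|\phi\|_2^2\asymp\|f\|_2^2$ is a legitimate and more elementary substitute: it needs only Theorem~\ref{thm:SamplingSet}(b) for the single set $\Lambda$ and bypasses Theorem~\ref{thm:RSLambda} and its direct-integral machinery altogether. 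What the paper's route buys in exchange is the uniform statement over all translates $\Lambda+x$, and hence the frame property of $\cG(\gamma,(-\Lambda)\times\bZ)$, which your pointwise argument would have to repeat translate by translate. Two housekeeping points you treat correctly and should keep explicit: the hypothesis $\inf_\xi|\hat c(\xi)|>0$ enters only through the stability of the integer shifts of $\gamma$ (via Wiener's lemma applied to $1/\hat c$), not in the sampling chain itself, where surjectivity of $a\mapsto a*c$ is never needed; and the upper sampling bound for $V^2(\gamma)$ is automatic from $\gamma\in W_0$ and the separation of $\Lambda$. For the hyperbolic secant, citing the Janssen--Strohmer representation \cite{JS02} is exactly what the paper does as well.
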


\begin{proof}
Since $c,d\in \ell ^1(\bZ )$, it is easy to see that $\gamma \in
W_0$. Therefore the characterization in Theorem~\ref{thm:linfinity} is applicable.
From the quasi-periodicity of the Zak transform we
obtain that
\begin{align}
\label{eq_fact}
Z\gamma (x,\xi)=  \hat c (-\xi ) \hat d(x) \, Zg(x,\xi) \, .
\end{align}
Since $|\hat c | , |\hat d| \geq C>0$ and $Zg $ satisfies the
condition~\eqref{eq:gablambda1} of Theorem~\ref{thm:RSLambda}, it
follows that $Z\gamma $ also satisfies
condition~\eqref{eq:gablambda1}. Therefore $\Lambda $ is a sampling set
for $\sisp^2(\gamma )$.

Finally,  Janssen and Strohmer \cite{JS02} showed that  the hyperbolic
secant ${\rm sech}(\nu x)$ possesses a representation \eqref{eq:ch5} with
respect to the Gaussian for all $\nu >0$, whence the statement
follows.
\end{proof}

\section{Gabor frames with totally positive generator}
\label{sec_gab}

We now use the relation between sampling sets for \sis s and Gabor
frames to derive a new result about Gabor frames with totally positive
windows.

Despite intensive research about Gabor frames, the only complete
results cover the Gaussian $\varphi (x) = e^{-cx^2}$, the hyperbolic
secant $g(x) = (e^{cx} + e^{-cx})\inv$, and totally
positive functions of finite type, which are defined by their Fourier
transform as $  \hat g(\xi)= \prod_{j=1}^n (1+2\pi
i\delta_j\xi)^{-1}$, $\delta_1,\ldots,\delta_n\in\bR \setminus \{0\}.$
Our current knowledge can be summarized as follows:

\begin{tm}  \label{tpfin}
Let $g$ be a Gaussian, the hyperbolic secant, or a  totally positive
function of finite type $n\geq 2, n\in\bN$. Then
the Gabor family $\cG(g,\alpha\bZ \times \beta\bZ)$ is a frame for $L^2(\bR)$
if and only if  $\alpha\beta<1$.
\end{tm}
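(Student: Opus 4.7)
The plan is to decouple the three cases listed in the theorem and reduce each to a sampling problem in the corresponding shift-invariant space by means of Theorem~\ref{thm:RSLambda}, which is the workhorse result of the paper. A preliminary dilation reduces both parameters: writing $D_\alpha f(x)=\alpha^{-1/2} f(x/\alpha)$, the family $\cG(g,\alpha\bZ\times\beta\bZ)$ is a frame for $L^2(\bR)$ if and only if $\cG(D_{1/\alpha} g,\bZ\times\alpha\beta\bZ)$ is, and the three classes (Gaussian, hyperbolic secant, totally positive of finite type $n\ge 2$) are preserved under $D_{1/\alpha}$. Swapping the roles of time and frequency via Fourier conjugation (which sends $\cG(g,A\times B)$ to a unitarily equivalent Gabor system with the classes again preserved, since $\hat\varphi$ is again Gaussian, $\widehat{\mathrm{sech}}$ is $\mathrm{sech}$ up to scaling, and $\hat g$ for totally positive $g$ of finite type is of the same type), we may further assume the setup matches the form $\cG(g,(-\Lambda)\times\bZ)$ of Theorem~\ref{thm:RSLambda} with $\Lambda=\alpha'\bZ$ for some $\alpha'$ satisfying $\alpha'=\alpha\beta$.

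For the sufficiency direction ($\alpha\beta<1$), the set $\Lambda=\alpha'\bZ$ is separated with Beurling density $D^-(\Lambda)=1/\alpha'>1$. By translation invariance of lattices, the uniformity in $x\in[0,1)$ required by Theorem~\ref{thm:RSLambda}(c) is automatic: it suffices to show that $\Lambda$ is a sampling set for $V^2(g)$. For the Gaussian, this is exactly Theorem~\ref{cor:Gauss}(b); for the hyperbolic secant, this is Corollary~\ref{cortrick}, whose proof invokes the Janssen--Strohmer representation together with the Zak-transform factorization~\eqref{eq_fact}; for a totally positive function of finite type $n\ge 2$, the sampling statement is the main theorem of \cite{GS13}, which I would simply cite. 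Theorem~\ref{thm:RSLambda}(a)$\Leftrightarrow$(b) then delivers the frame property.

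For the necessity direction, the universal density theorem for Gabor frames gives $D_2^-(\alpha\bZ\times\beta\bZ)=(\alpha\beta)^{-1}\ge 1$, so $\alpha\beta\le 1$. The remaining critical case $\alpha\beta=1$ must be excluded separately. Here I would argue as in the final remark of Section~\ref{sec_samp_gauss}: all three classes of generators belong to $W_0$, so $Zg$ is continuous on $\bR^2$, and continuity together with the quasi-periodicity of the Zak transform forces $Zg$ to vanish at some point $(x_0,\xi_0)\in[0,1)^2$ (as in \cite{bhw95}). By the equivalence of (a) and (d) in Theorem~\ref{thm:RSLambda} applied to $\Lambda=\bZ$, the existence of such a zero prevents $\{e^{2\pi ik\xi} Zg(x_0,\xi)\}_{k\in\bZ}$ from being a frame for $L^2(\bT)$ and hence rules out the Gabor frame property at critical density.

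The principal obstacle in this plan is the case of totally positive functions of finite type $n\ge 2$: the sampling result underlying that case is genuinely harder than the Gaussian and hyperbolic secant cases, and the proof in \cite{GS13} uses Schoenberg--Whitney conditions and a reduction to a finite-dimensional problem that do not fit into the framework developed here. A secondary, more technical, issue is the exclusion of $\alpha\beta=1$: while the Zak transform argument works uniformly for all three classes under the assumption $g\in W_0$, one must verify in each case that the generators indeed lie in $W_0$ (which is routine: each Fourier expression decays rapidly enough for the requisite continuity and integrability of $g$).
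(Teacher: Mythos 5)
First, note that the paper does not actually prove Theorem~\ref{tpfin}: it is stated as a summary of prior results, with the sentence immediately following it attributing the Gaussian and hyperbolic secant cases to \cite{lyub92,seip92,janssen94,JS02} and the finite-type case to \cite{GS13}. So your proposal is, in effect, a reconstruction from the paper's toolbox plus external citations, and for the Gaussian and hyperbolic secant it is essentially viable: Theorem~\ref{cor:Gauss}(b) and Corollary~\ref{cortrick} applied to every translate $\alpha\beta\bZ + x$ (each of which is separated with density $(\alpha\beta)^{-1}>1$), combined with the equivalence (a)$\Leftrightarrow$(b) of Theorem~\ref{thm:RSLambda}, gives sufficiency; the density theorem plus the zero of the continuous Zak transform handles necessity, exactly as in the remark following Theorem~\ref{cor:Gauss}. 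For the finite-type case you fall back on citing \cite{GS13}, which is also what the paper does, and you correctly identify that the nonuniform machinery of Sections \ref{sec_samp_gauss}--\ref{sec_samp_tp} cannot reach that case (the paper explicitly says Theorem~\ref{tmch1} is false for finite type).

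There is, however, one genuine error in your reduction. You normalize the translation lattice to $\bZ$ by dilation and then swap time and frequency by Fourier conjugation, claiming that all three window classes are preserved. This is false for totally positive functions of finite type: if $\hat g(\xi)=\prod_{j=1}^n(1+2\pi i\delta_j\xi)^{-1}$, then the new window after conjugation is $\hat g$ itself, a complex-valued rational function with only polynomial decay, which is not a totally positive function of finite type (that class consists of convolutions of one-sided exponentials and is not Fourier-invariant). As written, your reduction therefore does not place the third case into a form where \cite{GS13} applies. The fix is the reduction the paper itself uses in Section~\ref{sec_gab}: dilate in time by $\beta$, i.e.\ pass to $g_\beta(x)=g(x/\beta)$, under which $\cG(g,\alpha\bZ\times\beta\bZ)$ is unitarily equivalent to $\cG(g_\beta,\alpha\beta\bZ\times\bZ)$. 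This is already of the form $\cG(\cdot,(-\Lambda)\times\bZ)$ with $\Lambda=\alpha\beta\bZ$, no Fourier conjugation is needed, and all three classes are closed under dilation (for finite type the $\delta_j$ are merely rescaled). A second, minor imprecision: the uniformity in $x\in[0,1)$ is not ``automatic by translation invariance of lattices'' (the space $V^2(g)$ is only invariant under integer shifts); it is supplied by the implication (b)$\Rightarrow$(c) of Theorem~\ref{thm:RSLambda}, with (b) holding because each translate $\alpha\beta\bZ+x$ separately satisfies the density hypothesis. Finally, it is worth making explicit that the Zak-transform argument at critical density is exactly where the hypothesis $n\geq 2$ enters: for $n=1$ the one-sided exponential is discontinuous, lies outside $W_0$, and the conclusion changes to $\alpha\beta\leq 1$.
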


The result goes back to ~\cite{lyub92,seip92,janssen94,JS02}, the case
of totally positive functions of finite type was settled in
\cite{GS13} with a new proof in~\cite{kloos-stoeckler}.
The case $n=1$ corresponds to the one-sided exponential function $h(x)
= e^{-\delta x} \chi _{[0,\infty )}(\delta x)$ and was already settled
by Janssen~\cite{janssen96}: $\cG(h,\alpha\bZ \times \beta\bZ)$ is a frame, \fif\
$\alpha \beta \leq 1$.

The method for the proof in \cite{GS13}
uses techniques from approximation theory, namely the Schoenberg-Whitney
conditions for the invertibility of certain submatrices  of the
pre-Gramian $P_\Lambda (g)$.
Note that the  lattice $\otherset=\alpha \bZ\times \beta\bZ$ has
Beurling density $D^-(\otherset)=(\alpha\beta)^{-1}$, so that
$\alpha\beta<1$ is precisely  the density condition
$D^-(\otherset)>1$.

We extend Theorem~\ref{tpfin} in two ways: we use totally positive functions of
Gaussian type as window functions, and we study semi-regular sets of
the form $\Lambda \times  \beta \bZ $ instead of lattices $\alpha \bZ
\times \beta \bZ $. The following implies Theorem \ref{th_intro_gab}
of the  introduction.

\begin{tm}
Let $g$ be a totally positive function of Gaussian type  and let
$\Lambda\subseteq \bR$ be a separated set.
Then the Gabor family $\cG(g,\Lambda \times \beta\bZ)$ is a frame for $L^2(\bR)$
if and only if  $0<\beta<D^-(\Lambda)$.
\end{tm}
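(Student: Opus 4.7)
The plan is to reduce the general $\beta>0$ case to the $\beta=1$ case that is already settled by Theorem~\ref{th_intro_gab}, via the standard unitary $L^2$-dilation, and to obtain the necessary direction from the density theorem for Gabor frames.

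First, for necessity, suppose $\cG(g,\Lambda\times\beta\bZ)$ is a frame. The density theorem yields $D_2^-(\Lambda\times\beta\bZ)=\beta^{-1}D^-(\Lambda)\ge 1$. Since $g$ is a Schwartz function, the strict-density results cited in the introduction after equation~\eqref{gfra} upgrade this to $\beta^{-1}D^-(\Lambda)>1$, i.e.\ $0<\beta<D^-(\Lambda)$.

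For sufficiency, I introduce the unitary dilation $Df(t):=\beta^{-1/2}f(t/\beta)$ on $L^2(\bR)$. A direct computation gives the intertwining
\[
  D\,M_\eta T_x \;=\; M_{\eta/\beta}\,T_{\beta x}\,D,
\]
so setting $\eta=\beta k$ and $x=\lambda$ yields $D M_{\beta k}T_\lambda g = M_k T_{\beta\lambda}\tilde g$ with $\tilde g:=Dg$. Termwise application therefore gives $D\,\cG(g,\Lambda\times\beta\bZ)=\cG(\tilde g,\beta\Lambda\times\bZ)$, and unitarity of $D$ makes the two families simultaneously frames or not. The next step is to check that $\tilde g$ still lies in the class \eqref{eq:ch1}. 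Using \eqref{eq:ch1a}, one computes
\[
  \widehat{\tilde g}(\xi)\;=\;\beta^{1/2}\hat g(\beta\xi)\;=\;\beta^{1/2}\prod_{j=1}^n\bigl(1+2\pi i(\beta\delta_j)\xi\bigr)^{-1}e^{-(c\beta^2)\xi^2},
\]
which is of the prescribed form with new parameters $\beta\delta_j$ and $c\beta^2$ (the constant prefactor $\beta^{1/2}$ is irrelevant for the frame property). Moreover, $\beta\Lambda$ is separated since $\Lambda$ is.

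Finally I apply Theorem~\ref{th_intro_gab} to the window $\tilde g$ and the separated set $\beta\Lambda$: the system $\cG(\tilde g,\beta\Lambda\times\bZ)$ is a frame iff $D^-(\beta\Lambda)>1$, and since $D^-(\beta\Lambda)=\beta^{-1}D^-(\Lambda)$, this is precisely the condition $\beta<D^-(\Lambda)$. No serious obstacle is anticipated: all the deep analysis has been absorbed into Theorem~\ref{th_intro_gab}, and the only ingredient beyond the dilation bookkeeping is the observation that the class of totally positive functions of Gaussian type is invariant under scaling, which reduces to the one-line Fourier-transform check above.
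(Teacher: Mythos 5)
Your dilation reduction and your necessity argument are both correct and coincide with what the paper does: the paper likewise rescales to $g_\beta(x)=g(x/\beta)$, observes that the class \eqref{eq:ch1a} is closed under this scaling up to a harmless positive factor, computes $D_2^-(\Lambda\times\beta\bZ)=D^-(\Lambda)/\beta$, and gets necessity from the nonuniform Balian--Low theorems of \cite{AFK14,grorro15}, which apply because $g\in M^1(\bR)$ (your remark that $g$ is a Schwartz function yields the same hypothesis). The problem is the sufficiency step. You invoke Theorem~\ref{th_intro_gab} to settle the case $\beta=1$, but Theorem~\ref{th_intro_gab} is precisely the $\beta=1$ instance of the statement you are asked to prove, and in the paper it is established \emph{only} as a consequence of this very theorem (``The following implies Theorem~\ref{th_intro_gab} of the introduction''). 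After the dilation you have therefore reduced the theorem to its own special case without proving that special case; the argument is circular, and ``all the deep analysis has been absorbed into Theorem~\ref{th_intro_gab}'' is exactly the issue rather than a justification.

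What is missing is the content of the $\beta=1$ case. The paper supplies it by combining two results proved earlier: Theorem~\ref{thm:RSLambda}, which identifies the frame property of the semi-regular system $\cG(g_\beta,(-\Lambda')\times\bZ)$ with the statement that all translates $x+\Lambda'$ are sampling sets for $\sisp^2(g_\beta)$ with uniform constants, and Theorem~\ref{thm:SamplingSet}(b), which asserts that every separated set of lower Beurling density $>1$ is a sampling set for $\sisp^2$ of a totally positive generator of Gaussian type (this in turn rests on the Beurling-type characterization of Theorem~\ref{thm:linfinity}, the Rolle-type Lemma~\ref{Rolle}, and the Gaussian case of Theorem~\ref{cor:Gauss}). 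Applying these with $\Lambda'=-\beta\Lambda$, which is separated with $D^-(-\beta\Lambda)=D^-(\Lambda)/\beta>1$, closes the argument. To repair your proof, replace the citation of Theorem~\ref{th_intro_gab} by this combination (or by an independent proof of the $\beta=1$ case).
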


\begin{proof}
We  use the scaling invariance of the class of totally positive
functions of Gaussian type: if $g$  is of the form  \eqref{eq:ch1a},
then the function $g_\beta(x)=g( x/\beta)$ is of the same form
\eqref{eq:ch1a} with different parameters and a positive normalization
factor. By the usual dilation argument $\cG(g,\Lambda \times \beta\bZ)$  is a frame
if and only if $\cG(g_\beta,\beta\Lambda \times \bZ)$ is a frame. Moreover, simple geometric facts yield
$$  D_2^-(\Lambda\times \beta\bZ) = D^-(\beta \bZ ) \, D^-(\Lambda )  = \frac{D^-(\Lambda)}{\beta}.
$$
The combination of  Theorems \ref{thm:RSLambda} and
\ref{thm:SamplingSet} implies that  the Gabor family $\cG(g_\beta,\beta\Lambda \times \bZ)$
is a frame for $L^2(\bR)$, whenever $0<\beta<D^-(\Lambda)$.

The necessity of the condition $D^-(\Lambda \times \beta \bZ ) >1$
follows from recent nonuniform versions of the Balian-Low theorem
\cite{AFK14,grorro15}, which are applicable because $g$ satisfies the
decay and integrability condition
\begin{align*}
\int_{\bR^2} \abs{\ip{g}{M_\xi T_x g}} dx d\xi < \infty.
\end{align*}
(In the standard jargon of time-frequency analysis, this condition
means that $g$ belongs to the modulation space
$M^1(\bR)$.)
This completes the proof.
\end{proof}

\section{Appendix}
\subsection{Density, separation and weak convergence}
In the following we need the dual space $W_0^*$.  We can identify the dual space of $W_0$ with the
space of complex-valued Radon measures $\mu $ such that
\begin{align*}
\sup_{x\in\bR} \norm{\mu}_{C^*([x,x+1])} = \sup_{x\in\bR} \abs{\mu}([x,x+1]) < \infty.
\end{align*}
In the usual notation of amalgam spaces the dual is  $W_0^* =
W(\mathcal{M},\ell^\infty )$~\cite{fe83,he03}.
For completeness, we prove the following folklore lemma.
\begin{lemma}
\label{lemma_dens_lim}
Let $\Lambda \subseteq \bR$ be separated and $\Gamma \in \W(\Lambda)$. Then
$\Gamma$ is separated and $D^{-}(\Gamma) \geq D^{-}(\Lambda)$.
\end{lemma}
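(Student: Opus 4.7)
Write $\Gamma \in W(\Lambda)$ as a weak limit $\Lambda + x_n \weakconv \Gamma$ for some sequence $\{x_n\} \subseteq \bR$, and let $\delta>0$ be a separation constant for $\Lambda$; note that each translate $\Lambda + x_n$ is also $\delta$-separated. The plan is to handle separation first and then use it, together with the definition of weak convergence, to transfer the lower density bound from $\Lambda$ to $\Gamma$.

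\emph{Separation of $\Gamma$.} Let $\gamma_1,\gamma_2\in \Gamma$ with $\gamma_1\ne \gamma_2$. Fix any bounded open interval $(a,b)$ containing both points and any $\varepsilon>0$ with $\varepsilon<|\gamma_1-\gamma_2|/2$. By weak convergence there is $n$ so large that $\gamma_1,\gamma_2 \in (\Lambda+x_n)+(-\varepsilon,\varepsilon)$, i.e., there exist $\lambda_1,\lambda_2\in\Lambda$ with $|\gamma_i-(\lambda_i+x_n)|<\varepsilon$. The choice of $\varepsilon$ forces $\lambda_1\ne\lambda_2$, so $|\lambda_1-\lambda_2|\ge \delta$ and consequently $|\gamma_1-\gamma_2|\ge \delta - 2\varepsilon$. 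Since $\varepsilon$ is arbitrary we conclude $|\gamma_1-\gamma_2|\ge \delta$, so $\Gamma$ is $\delta$-separated.

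\emph{Density bound.} Fix $y\in\bR$, $R>\varepsilon$, and $0<\varepsilon<\delta/2$. Apply the second inclusion in the definition of weak convergence to the interval $(y-R+\varepsilon,y+R-\varepsilon)$: for $n$ sufficiently large (depending on $y$, $R$, $\varepsilon$),
\begin{equation*}
  (\Lambda+x_n)\cap (y-R+\varepsilon,y+R-\varepsilon) \subseteq \Gamma + (-\varepsilon,\varepsilon) \subseteq \Gamma + (y-R,y+R)\text{-neighborhood of itself}.
\end{equation*}
More precisely, every point on the left lies within distance $\varepsilon$ of some point of $\Gamma\cap [y-R,y+R]$, and because $\Lambda+x_n$ is $\delta$-separated with $2\varepsilon<\delta$, this assignment is injective. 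Hence
\begin{equation*}
 \#\bigl(\Gamma \cap [y-R,y+R]\bigr) \;\ge\; \#\bigl((\Lambda+x_n)\cap (y-R+\varepsilon,y+R-\varepsilon)\bigr) \;\ge\; \inf_{z\in\bR} \#\bigl(\Lambda \cap [z-(R-\varepsilon),z+(R-\varepsilon)]\bigr).
\end{equation*}
The right-hand side does not depend on $y$ or $n$, so taking $\inf_{y}$ on the left and dividing by $2R$ yields
\begin{equation*}
\inf_{y}\frac{\#(\Gamma\cap [y-R,y+R])}{2R} \;\ge\; \frac{R-\varepsilon}{R}\cdot \inf_{z}\frac{\#(\Lambda\cap [z-(R-\varepsilon),z+(R-\varepsilon)])}{2(R-\varepsilon)}.
\end{equation*}
Passing to $\liminf$ as $R\to\infty$ with $\varepsilon$ fixed gives $D^-(\Gamma)\ge D^-(\Lambda)$.

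The only subtle point is the dependence of $n$ on the parameters $y,R,\varepsilon$ in the definition of weak convergence: since we ultimately only need a pointwise lower bound on $\#(\Gamma\cap [y-R,y+R])$ that happens to be independent of $n$ and $y$, this dependence is harmless and the infimum over $y$ may be taken at the end. Everything else is routine bookkeeping.
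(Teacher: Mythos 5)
Your proof is correct, but it takes a genuinely different and more elementary route than the paper's. The paper encodes the translated sets as sums of Dirac measures $\mu_n=\sum_{\lambda\in\Lambda}\delta_{\lambda+x_n}$ in the dual amalgam space $W_0^*$, passes to the weak$^*$ limit $\mu=\sum_{\gamma\in\Gamma}\delta_\gamma$, and tests against a smooth bump $\eta$ to convert the counting bound for $\Lambda$ into one for $\Gamma$; you instead work directly with the two inclusions in the definition of weak convergence and construct an injective assignment from $(\Lambda+x_n)\cap(y-R+\varepsilon,y+R-\varepsilon)$ into $\Gamma\cap[y-R,y+R]$. The separation hypothesis plays the same essential role in both arguments: for you it makes the $\varepsilon$-assignment injective, while in the paper it guarantees that the limit measure has unit atoms (cf.\ Remark \ref{rem_mu_sep}, where a non-separated $\Lambda$ yields $\mu=\sum_k 2\delta_k$ and the lemma fails). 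Your version avoids the measure-theoretic machinery entirely and also dispenses with the paper's separate treatment of the case $D^-(\Lambda)=0$; what the paper's approach buys is a framework ($W_0^*$ and weak$^*$ limits of measures) that is reused in the proof of Theorem \ref{thm:linfinity}. Three cosmetic blemishes, none of which is a gap: the inclusion you invoke in the density step is the \emph{first} one in the paper's definition ($\Lambda_n\cap(a,b)\subseteq\Gamma+(-\varepsilon,\varepsilon)$), not the second; the tail of your displayed inclusion (``$\Gamma+(y-R,y+R)$-neighborhood of itself'') is garbled, though the sentence that follows states the intended claim correctly; and the count over the open interval $(y-R+\varepsilon,y+R-\varepsilon)$ may fall short of the count over the corresponding closed interval by at most two boundary points, a discrepancy that is absorbed in the limit $R\to\infty$.
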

\begin{proof}
Let $\{x_n:n\geq 1\} \subseteq \bR$ be such that $\Lambda + x_n \weakconv \Gamma$.
It is easy to see that $\Gamma$ is separated because $\Lambda$ is, and that the measures
$\mu_n := \sum_{\lambda \in \Lambda} \delta_{\lambda+x_n}$ are in
$W_0^*$ and  converge to
$\mu := \sum_{\gamma \in \Gamma} \delta_{\gamma}$ in the $\sigma(W^*_0(\bR), W_0(\bR))$ topology.
(Here  it is important that $\Lambda$ is separated; See Remark \ref{rem_mu_sep}.)
Let $\varepsilon \in (0,1/2)$ and $\eta \in C^\infty(\bR)$ be such that $0 \leq \eta \leq 1$, $\supp(\eta) \subseteq
[-1,1]$ and
$\eta \equiv 1$ on $[-1+\varepsilon,1-\varepsilon]$.

There is nothing to prove if $D^-(\Lambda ) = 0$.  If $D^-(\Lambda )
>0$,   choose $\rho \in \bR$  such that $0<\rho<D^{-}(\Lambda)$. Then
there exists $r_0>0$ such that for all $r \geq r_0$ and all
$x \in \bR$ we have  $\#(\Lambda \cap [x-r,x+r]) \geq 2r\rho$. For $x \in \bR$ and $r \geq 2r_0$,
\begin{align*}
\#(\Gamma \cap [x-r,x+r]) &\geq \int_{\bR} \eta\big(\frac{y-x}{r}\big) d\mu(y)
= \lim_{n\to \infty } \int_{\bR} \eta\big(\frac{y-x}{r}\big) d\mu_n(y)
\\
&\geq \liminf_{n\to \infty } \#((\Lambda + x_n) \cap [x-r(1-\varepsilon),x+r(1-\varepsilon)])
\\
&= \liminf_{n \to \infty } \#(\Lambda  \cap
[x-x_n- r(1-\varepsilon),x-x_n +r(1-\varepsilon)]) \\
&\geq 2r\rho(1-\varepsilon).
\end{align*}
Hence $D^{-}(\Gamma) \geq \rho(1-\varepsilon)$. As this holds for
arbitrary
$\varepsilon > 0$ and $\rho < D^{-}(\Lambda)$,
we conclude that $D^{-}(\Gamma) \geq D^{-}(\Lambda)$.
\end{proof}
\begin{remark}
\label{rem_mu_sep}
\normalfont{
Lemma \ref{lemma_dens_lim} is false  for non-separated sets
$\Lambda$. For example, if
$\Lambda = \{k,k+2^{-|k|}: k \in \bZ\}$, then $D^-(\Lambda ) = 2$, but
$\Lambda - n \weakconv \bZ$, as $n \longrightarrow \infty$ with $n \in \bN$,
and
$D^-(\bZ ) = 1$.
In this case, the measure $\mu$ in the proof is $\mu=\sum_{k\in \bZ} 2\delta_k$.
}
\end{remark}

\subsection{Gabor and sampling: postponed proofs}
\label{sec_pp}

\begin{proof}[Proofs of Theorems \ref{thm:RSLambda} and \ref{th_interp}]
\mbox{}

\noindent {\bf Step 1}. \emph{Relation between  the relevant operators}.

The spanning properties of the Gabor system
$\mathcal{G}(g,(-\Lambda)\times\bZ)$ on $L^2(\bR)$
are encoded in the spectrum of the synthesis operator
\begin{align*}
  C: \, \ell^2(\Lambda\times\bZ) &\longrightarrow L^2(\bR)
  \\
  \left(c_{\lambda,k} \right)_{\lambda \in \Lambda, k \in \bZ}
  &\mapsto \sum_{\lambda\in \Lambda} \sum_{k \in \bZ} c_{\lambda, k} e^{2 \pi i k \cdot} g(\cdot+\lambda).
\end{align*}
Indeed, $\mathcal{G}(g,(-\Lambda)\times\bZ)$ is a Riesz sequence if
and only if $C$ is bounded below, and
$\mathcal{G}(g,(-\Lambda)\times\bZ)$ is a frame if and only if $C$ is
surjective,  if and only if
$C^*$ is bounded below.

Similarly, the properties of
the sets $\Lambda+x$ as sampling and interpolating
sets for $\sisp ^2(g)$ are determined by the operators $P_{\Lambda +x}(g):
\ell^2(\bZ) \to \ell^2(\Lambda)$ represented by the matrices
$P_{\Lambda + x}(g) =
(g(x+\lambda-k))_{\lambda \in \Lambda ,k\in \bZ}$, or, equivalently, by the spectral properties of their
Banach adjoints
\begin{align*}
  P_{\Lambda +x}(g)':\, \ell^2(\Lambda) &\longrightarrow \ell^2(\bZ)
  \\
  (c_\lambda)_{\lambda \in \Lambda} &\mapsto
  \sum_{\lambda \in \Lambda} c_\lambda g(x+\lambda-k).
\end{align*}
More precisely, according to Lemma \ref{lemma_samp_ops},  $\Lambda+x$
is a sampling set for $\sisp ^2(g)$ if and only if $P_{\Lambda +x}(g)$ is bounded
below and it is an
interpolating set if and only if $P_{\Lambda +x}(g)'$ is bounded below.

It remains to  clarify the relation between the operators $C$ and
$P_{\Lambda + x}(g)$.
We consider the following unitary maps between Hilbert spaces. Let $I
= [0,1)$ and
$U: \ell^2(\Lambda\times\bZ) \to L^2(I,\ell^2(\Lambda))$ be given
by $U\big( (a_{\lambda, k})_{(\lambda,k)\in\Lambda \times \bZ }) = \big( m_\lambda
(x))_{\lambda \in \Lambda }$ with $m_\lambda (x) = \sum _{k\in \bZ }
a_{\lambda, k} e^{2\pi i kx}$ and $V: L^2(\bR) \to L^2(I, \ell^2(\bZ))$
be given by $Vf(x) = \left(f(x-k)\right)_{k\in\bZ})$ for $x\in I$.
Now consider the map
\begin{align*}
  \widehat C: \,L^2(I,\ell^2(\Lambda )) &\longrightarrow L^2(I, \ell^2(\bZ))
  \\
  \widehat C  \Big(x \mapsto (m_\lambda(x))_{\lambda \in \Lambda} \Big)
  &=  \left(
  x \mapsto \left(\sum_{\lambda\in \Lambda} m_{\lambda}(x) g(x+\lambda-k) \right)_{k\in\bZ} \right).
\end{align*}
In technical jargon, $\widehat C$ is the \emph{direct integral}
\begin{align*}
\widehat C = \int^\oplus_I P_{\Lambda +x}(g)' \, dx.
\end{align*}
These definitions imply that
\begin{equation}
  \label{eq:ch89}
  VC=\widehat C U \, ,
\end{equation}
and thus $C$ and $\widehat C$ have the same spectral properties. It is
a standard fact about direct integrals  that $\widehat C$ is bounded
below if and only if
$\essinf_x \sigma_{\min} (P_{\Lambda +x}(g)') >0$
- where $\sigma_{\min} (T) :=
\inf \{\norm{Tx}:\norm{x}=1\}$.  Similarly,  $\widehat C^*$ is bounded below
if and only if $\essinf_x \sigma_{\min} (P_{\Lambda +x}(\overline{g}) )=
\essinf_x \sigma_{\min} (P_{\Lambda +x}(g) ) >0$ (see \cite[Chapter 1, Part II]{dixmier81}
for a general reference on direct integrals or \cite{janssen95,
  mystery} for direct computations in related contexts).

\noindent {\bf Step 2}. \emph{Conclusions}.
We only argue for Theorem \ref{thm:RSLambda}, Theorem \ref{th_interp}
follows in the same manner.

Assumption (c) says that  $P_{\Lambda +x}(g)$ is bounded below with
uniform constants. The direct integral decomposition~\eqref{eq:ch89} now
implies  the equivalence between (a) and (c). The implication (b)
$\Rightarrow $ (c)  follows from the fact that
the map $x \mapsto P_{\Lambda +x}(g)$ is continuous from $\bR $ to
$\mathcal{B}(\ell^2(\bZ),  \ell^2(\Lambda))$ with respect to  the operator norm. The
continuity is a special case of various jitter error bounds in the sampling literature, see
e.g. \cite[Lemma 2.2 and Theorem 2.3]{femoro08}, and holds precisely
when  $g \in W_0$.   Finally, the equivalence of (c) and (d) is a straightforward
application of Parseval's identity with  $h(\xi)= \sum_{k\in\bZ} c_k e^{2\pi i k\xi}$ and  the definition
of the Zak transform.
\end{proof}
\begin{remark}
\normalfont{
Common signal processing proofs of Theorems \ref{thm:RSLambda} and
\ref{th_interp} for the uniform case $\Lambda = \alpha \bZ $
use the Poisson summation formula  and are therefore not applicable to
non-lattice sets $\Lambda$. The fact that these theorems are true in the stated generality seems to have gone
so far unnoticed.
}
\end{remark}

\begin{proof}[Proof of Theorem \ref{tm_uniq}]
Without loss of generality we consider the standard Gaussian $\varphi(x) := e^{-\pi x^2}$. The
general case $e^{-c x^2}$ follows then by applying the following change of variables:
$f(x) \mapsto f(a x)$, $\otherset \mapsto \left[ \begin{smallmatrix} a^{-1} & 0\\ 0& a \end{smallmatrix}
\right] \cdot \otherset$, $a=\sqrt{c/\pi}$, which preserves the density of $\otherset$.

In order to show that $\cG(\varphi,\otherset)$ spans a dense subspace of $W_0$, we show that the only linear functional
on $W_0$ that annihilates $\cG(\varphi,\otherset)$ is the zero
functional. In the following
\begin{align}
\label{eq_stft}
V_\varphi \mu(x,\xi) := \int_{\bR} \varphi (t-x) e^{- 2\pi i \xi t}
d\mu(t)  \qquad (x,\xi ) \in \bR ^2 \,
\end{align}
denotes the  the short-time Fourier
transform of a (Radon) measure $\mu$ (with respect to the Gaussian
window $\varphi$).

Now  suppose that $\mu \in W_0^*$ is a measure such that $V_\varphi
\mu (\nu_1 , \nu_2 ) = 0$
for all $(\nu _1 , \nu _2 ) \in \otherset$. We want to show that $\mu \equiv
0$.  By the properties of the short-time Fourier transform $V_\varphi
\mu $ is bounded for $\mu \in W_0^*$ and
satisfies the identity $V_\varphi  \mu (x,- \xi) = F(z) e^{-\pi i x \xi}
e^{-\tfrac{\pi}{2}\abs{z}^2}$, where $z=x+i\xi \in \bC$ and $F$ is the
Bargmann transform  of $\mu$. Hence  $F$
is
analytic and satisfies
\begin{align*}
\norm{F}_{\mathcal{F}^\infty} := \sup_{z\in\bC} \left| F(z) e^{-\tfrac{\pi}{2}\abs{z}^2} \right| <
\infty.
\end{align*}
Moreover, $F \equiv 0$ if and only if $\mu=0$. (See for example
\cite{ba61}, \cite{folland89}, or  \cite[Section 3.4]{book}.)

With these identifications, we reduce the problem to the
following. Let $F: \bC \to \bC$ be an analytic function
such that $F \equiv 0$ on $\overline{\otherset}=\{x-i\xi: (x,\xi) \in \otherset\}$ and $\norm{F}_{\mathcal{F}^\infty} <
\infty$.
We want to show that $F \equiv 0$. Suppose not.
We can assume that $\norm{F}_{\mathcal{F}^\infty} = 1$ and that
$F(0) \not=
0$. (If $F(0)=0$, consider $F(z)/z^k$ for a suitable power and
normalize.)  Then $F:\bC \to \bC$ is an entire function that satisfies
\begin{align}
\label{eq_aa}
\abs{F(z)} \leq e^{\tfrac{\pi}{2}\abs{z}^2}.
\end{align}
Let $N(r)$ be the number of zeros of $F$ inside the open disk $D_r$  counted with multiplicities.
By Jensen's formula,
\begin{align*}
\int_0^r \frac{N(t)}{t} dt = \frac{1}{2\pi} \int_0^{2\pi} \log\abs{F(r e^{i\theta})} d\theta - \log\abs{F(0)}.
\end{align*}
By \eqref{eq_aa}, $\log\abs{F(r e^{i\theta})} \leq \tfrac{\pi}{2}r^2$. Therefore,
\begin{align}
\label{eq_bb}
\int_0^r \frac{N(t)}{t} dt \leq \frac{\pi}{2}r^2 - \log\abs{F(0)}.
\end{align}
On the other hand, since
$D^{-}\left(\overline{\otherset}\right)=D^{-}(\otherset)>1$, we can
choose $\nu >1$ and $r_0>0$
such that,
for $r \geq
r_0$, $N(r) \geq \nu \pi
r^2$. As a consequence,
\begin{align*}
\int_0^r \frac{N(t)}{t} dt \geq \nu  \frac{\pi}{2}r^2 -C,
\end{align*}
for some constant $C>0$. Since $\nu >1$, this contradicts \eqref{eq_bb} for $r \gg 0$, and hence $F \equiv 0$, as
desired.
\end{proof}

\begin{remark}
\normalfont{
The proof of Theorem \ref{tm_uniq} actually shows that $\cG(g,\otherset)$ spans a dense subspace of the space
$M^1(\bR)$,
which consists of functions with integrable short-time Fourier transform. $M^1(\bR)$ embeds continuously into
$W_0(\bR)$.
}
\end{remark}

\subsection{Beurling-type characterization of sampling: postponed proof}
\begin{proof}[Proof of Theorem \ref{thm:linfinity}]
$(a) \Leftrightarrow (b)$ We consider the matrix $P_\Lambda(g)=(g(\lambda-k))_{\lambda \in \Lambda,k\in\bZ}$ as an
operator $\ell^p(\bZ) \to
\ell^p(\Lambda)$. To show that $(a) \Leftrightarrow (b)$, we invoke
the following non-commutative version of Wiener's Lemma taken from
\cite[Proposition 8.1]{grorro15}:
\begin{prop}
\label{prop_loc}
Let $\Lambda$ and $\Gamma$ be relatively separated subsets of $\bR$
and  $A \in \bC^{\Lambda \times \Gamma}$ be a
matrix such that
\begin{align*}
\abs{A_{\lambda,\gamma}} \leq  \Theta (\lambda-  \gamma)
\qquad \lambda \in \Lambda, \gamma \in \Gamma,
\qquad \mbox{for some } \Theta  \in W_0 \, .
\end{align*}
Then $A$ is bounded below on some $\ell ^{p_0}(\Gamma )$ $1\leq p_0
\leq \infty $, i.e.,  $
  \norm{Ac}_{p_0} \geq C_0 \norm{c}_{p_0}$  for all $ c\in \ell
  ^{p_0} (\Gamma)$, \fif\ $A$ is bounded below on \emph{all} $\ell ^p
  (\Gamma )$, $1\leq p\leq \infty $.
\end{prop}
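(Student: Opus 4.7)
The plan is to prove Proposition \ref{prop_loc} by reducing it to Sj\"ostrand's spectral invariance theorem for matrices with $W_0$ off-diagonal decay, which is the essential non-trivial ingredient of the Baskakov-Sj\"ostrand school cited in the paper.

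First I would set up the natural Banach algebra in which $A$ lives. Let $\mathcal{A}(I)$ denote the space of matrices $B \in \bC^{I \times I}$, with $I \subseteq \bR$ relatively separated, whose entries satisfy $|B_{i,j}| \le \Theta(i-j)$ for some $\Theta \in W_0$, normed by the infimum of $\|\Theta\|_W$. Since $W_0$ is closed under convolution and $I$ is relatively separated, Schur-type estimates show that $\mathcal{A}(I)$ is closed under matrix multiplication and that every $B \in \mathcal{A}(I)$ is bounded on every $\ell^p(I)$, $1 \le p \le \infty$. The deep input is Sj\"ostrand's theorem: $\mathcal{A}(I)$ is inverse-closed in $B(\ell^2(I))$, so invertibility on $\ell^2$ upgrades the inverse to an element of $\mathcal{A}(I)$, which is then bounded on every $\ell^p$.

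To handle the non-square $A \in \bC^{\Lambda \times \Gamma}$ of Proposition \ref{prop_loc}, I would form the Gram-type matrix $A^*A$ on $\Gamma \times \Gamma$. Using $|A_{\lambda,\gamma}| \le \Theta(\lambda-\gamma)$ and the relative separation of $\Lambda$, a direct computation shows that the entries of $A^*A$ are dominated by a discrete convolution $\Theta * \tilde{\Theta}$ (with $\tilde{\Theta}(x)=\Theta(-x)$), which belongs to $W_0$; hence $A^*A \in \mathcal{A}(\Gamma)$. If one can establish that $A^*A$ is invertible on $\ell^2(\Gamma)$, Sj\"ostrand's theorem yields $(A^*A)^{-1} \in \mathcal{A}(\Gamma)$, and the formal left inverse $B := (A^*A)^{-1} A^*$ acquires the same kind of off-diagonal decay and hence is bounded on every $\ell^p$. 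The chain of inequalities $\|c\|_p = \|BAc\|_p \le \|B\|_{p \to p} \|Ac\|_p$ then transfers bounded-belowness from any single $\ell^{p_0}$ to every $\ell^p(\Gamma)$, which is the desired conclusion.

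The main obstacle is precisely the bootstrap from ``bounded below on $\ell^{p_0}$'' to invertibility of $A^*A$ on $\ell^2$. For $p_0 = 2$ it is immediate, since $\langle A^*Ac,c\rangle = \|Ac\|_2^2 \ge C_0^2 \|c\|_2^2$ makes $A^*A$ coercive. For $p_0 \ne 2$ the difficulty is real: complex interpolation does not preserve bounded-belowness, so one cannot directly deduce the $\ell^2$ statement. The way out, following Baskakov and Sj\"ostrand, is to build a \emph{parametrix} for $A$ entirely inside the algebra $\mathcal{A}$: exploit the $\ell^{p_0}$ bounded-belowness to construct an approximate left inverse $B_0 \in \mathcal{A}$ with $B_0 A = I - K$ and $\|K\|_{\mathcal{A}(\Gamma)}$ small, then invert $I-K$ by a Neumann series inside $\mathcal{A}(\Gamma)$ (no $\ell^2$ theory required at this step). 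This produces an exact left inverse of $A$ lying in $\mathcal{A}$ and hence bounded on every $\ell^p$. I expect the parametrix construction to be the most delicate step, because the approximate inverse must be built in a $p$-independent manner and the perturbative control has to come from the Sj\"ostrand class norm rather than from any single operator norm.
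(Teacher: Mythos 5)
First, a point of reference: the paper does not actually prove Proposition \ref{prop_loc}; it imports it verbatim from \cite[Proposition 8.1]{grorro15}, with pointers to \cite{sj95,bas90,ABK08,Sun07a} for the underlying Wiener-type lemmas. So your proposal has to be judged as a free-standing proof attempt, and as such it has one sound half and one genuine gap. The sound half is the passage from an $\ell^2$ lower bound to lower bounds on every $\ell^p$: dominating the entries of $A^*A$ by (a multiple of) $\tilde\Theta*\Theta\in W_0$, invoking inverse-closedness of the matrix algebra $\mathcal{A}(\Gamma)$ in $\mathcal{B}(\ell^2(\Gamma))$ -- note that this requires the extension of Sj\"ostrand's $\bZ^d$ result to relatively separated index sets, as in \cite{Sun07a} -- and then using the left inverse $(A^*A)^{-1}A^*$, which is bounded on every $\ell^p$. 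That is the standard route and is correct as far as it goes.

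The gap is the converse direction: passing from a lower bound on a single $\ell^{p_0}$ with $p_0\neq 2$ to the $\ell^2$ lower bound. This is precisely the content of the proposition that goes beyond spectral invariance, and it is the part you leave unproved. You rightly observe that complex interpolation does not preserve lower bounds, but the proposed remedy -- a parametrix $B_0$ in the algebra with $B_0A=I-K$ and $\norm{K}_{\mathcal{A}}$ small -- is never constructed, and no mechanism is indicated by which the hypothesis (a lower bound for one operator norm) would yield an approximate left inverse lying in $\mathcal{A}$ with a remainder that is small in the \emph{algebra} norm. In fact, if such a $B_0$ were available, one could invert $I-K$ by a Neumann series inside the Banach algebra $\mathcal{A}$ and conclude boundedness below on all $\ell^p$ without ever invoking Sj\"ostrand's theorem; this is a strong indication that the difficulty has been relocated rather than resolved. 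The proofs in the literature handle this direction by different means: in \cite{ABK08} the $\ell^{p_0}$ lower bound is localized to finitely supported vectors on large blocks (the off-diagonal decay controlling the leakage between blocks) and then transferred across exponents, while \cite{grorro15} argues with weak limits of almost-null vectors. Without an argument of this kind, your proof establishes the proposition only when the given exponent is $p_0=2$.
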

Since $g \in W_0$, the envelope for $P_\Lambda (g)$ can be chosen to
be $\Theta (x) = |g(x)|$. Then $P_{\Lambda}(g)$ is bounded below on
some $\ell ^{p_0}$, \fif\ it is bounded below on all $\ell ^p$. This
gives the desired conclusion.

 The
statement in \cite{grorro15}  actually covers more general  matrices
that are  concentrated away from a collection of lines; with the notation of
\cite[Proposition 8.1]{grorro15} we apply it with $G=\{1\}$.  For related versions of Wiener's Lemma we refer to Sj\"ostrand's
fundamental work~\cite{sj95} and \cite{ABK08,bas90,Sun07a}.

 $(b) \Rightarrow (c)$  Suppose that $\Lambda$ is a
sampling set for $\sisp^\infty(g)$ and let $\Gamma \in W_{\bZ
}(\Lambda )$,  so that there exist $k_n \in
\bZ$ such that $\Lambda - k_n \weakconv \Gamma$.  Since $\Lambda$ is a sampling set, the operator
$P_\Lambda (g): \ell^\infty(\bZ) \to \ell^\infty(\Lambda)$ is bounded below, so its (pre) adjoint
$P_\Lambda(g)^\prime: \ell^1(\Lambda) \to \ell^1(\bZ)$, represented by the matrix
\begin{align*}
P_\Lambda (g)^\prime = (g(\lambda-k))_{k\in\bZ, \lambda \in \Lambda},
\end{align*}
is onto  by the closed range theorem. This means that for every  sequence $c \in \ell^1(\bZ)$
there exists an
$a \in \ell^1(\Lambda)$ such that
\begin{align}
\label{eq_a}
c_k = \sum_{\lambda \in \Lambda} a_\lambda g(\lambda-k),
\qquad k \in \bZ,
\end{align}
and $\norm{a}_1 \lesssim \norm{c}_1$. For  $c \in \ell^1(\bZ)$ fixed, we apply this observation
to every sequence $(c_{k-k_n})_{k \in \bZ}$ and  find a corresponding
sequence $a^n \in \ell^1(\Lambda)$ with
$\norm{a^n}_1 \lesssim \norm{c}_1$ and
\begin{align}
\label{eq_b}
c_{k-k_n} = \sum_{\lambda \in \Lambda} a^n_\lambda g(\lambda-k),
\qquad k \in \bZ.
\end{align}
Consider the shifted sequence $b^n \in \ell^1(\Lambda - k_n)$ defined by $b^n_{\lambda - k_n} = a^n_\lambda$. We can
rewrite \eqref{eq_b} as
\begin{align}
\label{eq_c}
c_{k} = \sum_{\nu \in \Lambda-k_n} b^n_\nu g(\nu-k),
\qquad k \in \bZ.
\end{align}
Consider the measure $\mu_n := \sum_{\nu \in \Lambda-k_n} b^n_\nu
\delta_\nu$. Then $\mu _n$ is a bounded measure, i.e.,  $\mu \in
C^*_0(\bR )$, and  $\norm{\mu_n}_{C^*_0(\bR)} = \norm{b^n}_1 = \norm{a^n}_1 \lesssim \norm{c}_1$.
By passing to a subsequence, we may choose a measure $\mu \in C^*_0(\bR)$ such that
$\mu_n \longrightarrow \mu$ in the weak$^*$ topology  $\sigma(C^*_0(\bR),C_0(\bR))$.
Since $\Lambda-k_n \weakconv \Gamma$, it follows that $\Gamma$ is relatively separated and
$\supp(\mu) \subseteq \Gamma$ (see \cite[Lemmas 4.3 and 4.5]{grorro15}).
Hence $\mu = \sum_{\gamma \in \Gamma} b_\gamma \delta_\gamma$, for some sequence $b$.
Moreover $\norm{b}_1 = \norm{\mu}_{C^*_0(\bR)} \lesssim \liminf_n \norm{\mu_n}_{C^*_0(\bR)} \lesssim 1$.
Finally, for each $k \in \bZ$,
\begin{align*}
\sum_{\gamma \in \Gamma} b_\gamma g(\gamma-k)
&=\int_{\bR} g(x-k)\,  d\mu(x)
=\lim_n \int_{\bR} g(x-k) d\mu_n(x)
\\
&=\lim_n \sum_{\nu \in \Lambda-k_n} b^n_\nu g(\nu-k)=c_k.
\end{align*}
Since $c \in \ell^1(\bZ)$ was arbitrary, this shows that
$P_\Gamma (g)^\prime :\ell^1(\Gamma) \to \ell^1(\bZ)$ is onto
and therefore  $P_\Gamma (g):\ell^\infty(\bZ) \to \ell^\infty(\Gamma)$ is
bounded below. Hence $\Gamma$ is a sampling set for $\sisp^\infty(g)$.

The implication $(c) \Rightarrow (d)$ is obvious, the equivalence
$(d) \Leftrightarrow (e)$ is just language, and the equivalence $(e)
\Leftrightarrow (f)$ is standard functional analysis.

$(d) \Rightarrow (b)$  Assume  on the contrary that $\Lambda$ is
not a sampling set for $p=\infty$. Then  there
exist sequences $c^n \in \ell^\infty(\bZ)$ with $\norm{c^n}_\infty =1$
such that
\begin{align}
\label{eq_d}
\sup_{\lambda \in \Lambda} \abs{\sum_{k \in \bZ} c^n_k g(\lambda-k)} \rightarrow 0,
\qquad \mbox{ as } n \rightarrow \infty.
\end{align}
Since $\norm{c^n}_\infty = 1$, there exist $k_n \in \bZ$ such that $\abs{c^n_{k_n}} \geq 1/2$. Let
$d^n \in \ell^\infty(\bZ)$ be the shifted sequence $d^n_{k} :=
c^n_{k+k_n}$. By passing to a  subsequence
we may assume that (i) $d^n \rightarrow d\in \ell ^\infty (\bZ )$
in the $\sigma(\ell^\infty,\ell^1)$ topology,
and (ii) $\Lambda - k_n \weakconv \Gamma$  for some (relatively separated) set $\Gamma \in \WZ(\Lambda)$
(see \cite[Lemma 4.5]{grorro15}). Let $f := \sum_{k \in \bZ} d_k g(\cdot-k) \in \sisp^\infty(g)$.
Since $c^n_{k_n}=d^n_0 \longrightarrow d_0$, we must have $d_0
\not=0$, and therefore $f \not\equiv 0$.

We next  show that  $f \equiv 0$ on $\Gamma$. Given $\gamma \in \Gamma$, there exists a sequence
$\{\lambda_n: n \geq 1\} \subseteq \Lambda$ such that $\lambda_n - k_n \longrightarrow \gamma$. Since
$g \in W_0$, it follows  that $\left( g(\lambda_n - k_n-k) \right)_{k \in \bZ}
\longrightarrow \left( g(\gamma-k) \right)_{k \in \bZ}$ in $\ell^1(\bZ)$. Since
$d^n \longrightarrow d$ in $\sigma(\ell^\infty,\ell^1)$,
we conclude from \eqref{eq_d} that
\begin{align*}
\abs{f(\gamma)}=
\big|\sum_{k \in \bZ} d_k g(\gamma-k) \big|
= \lim_n \,
\big| \sum_{k \in \bZ} d_k^n g(\lambda_n-k_n-k) \big|
= \lim_n
\big| \sum_{k \in \bZ} c^n_k g(\lambda_n-k) \big| = 0.
\end{align*}
Hence $f \equiv 0$ on $\Gamma$, which  contradicts  (d). This completes the proof.
\end{proof}


\begin{thebibliography}{10}

\bibitem{ABK08}
A.~Aldroubi, A.~G. Baskakov, and I.~A. Krishtal.
\newblock Slanted matrices, {B}anach frames, and sampling.
\newblock {\em J. Funct. Anal.}, 255:1667--1691, 2008.

\bibitem{AG00}
A.~Aldroubi and K.~Gr{\"o}chenig.
\newblock Beurling-{L}andau-type theorems for non-uniform sampling in shift
  invariant spline spaces.
\newblock {\em J. Fourier Anal. Appl.}, 6(1):93--103, 2000.

\bibitem{AG01}
A.~Aldroubi and K.~Gr{\"o}chenig.
\newblock Nonuniform sampling and reconstruction in shift-invariant spaces.
\newblock {\em SIAM Rev.}, 43(4):585--620, 2001.

\bibitem{AFK14}
G.~Ascensi, H.~G. Feichtinger, and N.~Kaiblinger.
\newblock Dilation of the {W}eyl symbol and {B}alian-{L}ow theorem.
\newblock {\em Trans. Amer. Math. Soc.}, 366(7):3865--3880, 2014.

\bibitem{bacahela06}
R.~Balan, P.~G. Casazza, C.~Heil, and Z.~Landau.
\newblock Density, overcompleteness, and localization of frames. {I}. {T}heory.
\newblock {\em J. Fourier Anal. Appl.}, 12(2):105--143, 2006.

\bibitem{ba61}
V.~{B}argmann.
\newblock {O}n a {H}ilbert space of analytic functions and an associated
integral transform. {I}{I}: {A} family of related function spaces.
{A}pplication to distribution theory.
\newblock {\em Comm. Pure Appl. Math.}, 14:187--214, 1961.

\bibitem{bas90}
A.~G. Baskakov.
\newblock Wiener's theorem and asymptotic estimates for elements of inverse
  matrices.
\newblock {\em Funktsional. Anal. i Prilozhen.}, 24(3):64--65, 1990.

\bibitem{BaxterSiva1996}
B.~J. C.~Baxter and N.~Sivakumar.
\newblock On shifted  cardinal interpolation by Gaussians and multiquadrics.
\newblock {\em J. Approx. Theory}, 87:36--59, 1996.

\bibitem{bhw95}
J.~J. Benedetto, C.~Heil, and D.~F. Walnut.
\newblock Differentiation and the {B}alian-{L}ow theorem.
\newblock {\em J. Fourier Anal. Appl.}, 1(4):355--402, 1995.

\bibitem{be66}
A.~Beurling.
\newblock Local harmonic analysis with some applications to differential
  operators.
\newblock In {\em Some Recent Advances in the Basic Sciences, Vol. 1 (Proc.
  Annual Sci. Conf., Belfer Grad. School Sci., Yeshiva Univ., New York,
  1962--1964)}, pages 109--125. Belfer Graduate School of Science, Yeshiva
  Univ., New York, 1966.

\bibitem{be89}
A.~Beurling.
\newblock {\em The collected works of {A}rne {B}eurling. {V}ol. 1}.
\newblock Contemporary Mathematicians. Birkh\"auser Boston, Inc., Boston, MA,
  1989.
\newblock Complex analysis, Edited by L. Carleson, P. Malliavin, J. Neuberger
  and J. Wermer.



\bibitem{blmave02}
T.~{B}lu, P.~{M}arziliano, and M.~{V}etterli.
\newblock {S}ampling signals with finite rate of innovation.
\newblock {\em {I}{E}{E}{E} {T}rans. {S}ignal {P}rocess.}, 50(6):1417--1428,
  {J}une 2002.

\bibitem{boas}
R.~P. Boas, Jr.
\newblock {\em Entire functions}.
\newblock Academic Press, New York, 1954.

\bibitem{BS93}
S.~Brekke and K.~Seip.
\newblock Density theorems for sampling and interpolation in the
  {B}argmann-{F}ock space. {III}.
\newblock {\em Math. Scand.}, 73(1):112--126, 1993.


\bibitem{da90}
I.~{D}aubechies.
\newblock {T}he wavelet transform, time-frequency localization and signal
analysis.
\newblock {\em IEEE Trans. Inform. Theory}, 36(5):961--1005, 1990.

\bibitem{dagr88}
I.~{D}aubechies and A.~{G}rossmann.
\newblock {F}rames in the {B}argmann {H}ilbert space of entire functions.
\newblock {\em Comm. Pure Appl. Math.}, 41(2):151--164, 1988.


\bibitem{dixmier81}
J.~Dixmier.
\newblock {\em Von {N}eumann {A}lgebras}, volume~27 of {\em North-Holland
  Mathematical Library}.
\newblock North-Holland Publishing Co., Amsterdam-New York, 1981.
\newblock With a preface by E. C. Lance, Translated from the second French
  edition by F. Jellett.

\bibitem{ds52}
R.~J. Duffin and A.~C. Schaeffer, A class of nonharmonic {F}ourier
  series, {\em Trans.\ Amer.\ Math.\ Soc\/}., 72:341--366, 1952.


\bibitem{fe83}
H.~G. Feichtinger.
\newblock Banach convolution algebras of {W}iener type.
\newblock In {\em Functions, series, operators, Vol. I, II (Budapest, 1980)},
  pages 509--524. North-Holland, Amsterdam, 1983.

\bibitem{femoro08}
H.~G. {F}eichtinger, U.~M. {M}olter, and J.~L. {R}omero.
\newblock {P}erturbation techniques in irregular spline-type spaces.
\newblock {\em Int. J. Wavelets Multiresolut. Inf. Process.}, 6(2):249--277,
2008.

\bibitem{folland89}
G.~B. Folland.
\newblock {\em Harmonic Analysis in Phase Space}.
\newblock Princeton Univ. Press, Princeton, NJ, 1989.

\bibitem{FGHKR}
H.~{F}{\"u}hr, K.~{G}r{\"o}chenig, A.~{H}aimi, A.~{K}lotz, and J.~L. {R}omero.
\newblock {D}ensity of Sampling and Interpolation in Reproducing Kernel
{H}ilbert Spaces. \newblock
{\em J. London Math. Soc.} To appear.
{\em ArXiv: 1607.07803}.

\bibitem{book}
K.~Gr{\"o}chenig.
\newblock {\em Foundations of Time-Frequency Analysis}.
\newblock Birkh\"auser Boston Inc., Boston, MA, 2001.

\bibitem{gr04}
K.~{G}r{\"o}chenig.
\newblock {L}ocalization of frames, {B}anach frames, and the invertibility of
the frame operator.
\newblock {\em J. Fourier Anal. Appl.}, 10(2):105--132, 2004.

\bibitem{gro06}
K.~Gr\"ochenig.
\newblock Time-frequency analysis of {S}j\"ostrand's class.
\newblock {\em Revista Mat. Iberoam.}, 22(2):703--724, 2006.

\bibitem{gr07-2}
K.~{G}r{\"o}chenig.
\newblock {G}abor frames without inequalities.
\newblock {\em {I}nt. {M}ath. {R}es. {N}ot. {I}{M}{R}{N}}, (23):{A}rt. {I}{D}
rnm111, 21, 2007.

\bibitem{mystery}
K.~{G}r{\"o}chenig.
\newblock {T}he mystery of {G}abor frames.
\newblock {\em J. Fourier Anal. Appl.}, 20(4):865--895, 2014.

\bibitem{GL09}
K.~Gr{\"o}chenig and Y.~Lyubarskii.
\newblock Gabor (super)frames with {H}ermite functions.
\newblock {\em Math. Ann.}, 345(2):267--286, 2009.


\bibitem{grorro15}
K.~{G}r{\"o}chenig, J.~{O}rtega {C}erd{\`a}, and J.~L. {R}omero.
\newblock {D}eformation of {G}abor systems.
\newblock {\em Adv. Math.}, 277(4):388--425, 2015.

\bibitem{GS13}
K.~{G}r\"{o}chenig and J.~{S}t\"{o}ckler.
\newblock {G}abor frames and  totally positive functions.
\newblock {\em {D}uke {M}ath. {J}.}, 162:  1003--1031, 2013.

\bibitem{hamanawa12}
T.~Hangelbroek, W.~Madych, F.~Narcowich, and J.~D. Ward.
\newblock Cardinal interpolation with {G}aussian kernels.
\newblock {\em J. Fourier Anal. Appl.}, 18(1):67--86, 2012.

\bibitem{he03}
C.~{H}eil.
\newblock {A}n introduction to weighted {W}iener amalgams.
\newblock In M.~{K}rishna, R.~{R}adha, and S.~{T}hangavelu, editors, {\em
{W}avelets and their {A}pplications ({C}hennai, {J}anuary 2002)}, pages
183--216. {A}llied {P}ublishers, {N}ew {D}elhi, 2003.

  \bibitem{heil07}
C.~Heil.
\newblock History and evolution of the density theorem for {G}abor frames.
\newblock {\em J. Fourier Anal. Appl.}, 13(2):113--166, 2007.

\bibitem{Jan93}
A.~J. E.~M. {J}anssen.
\newblock {T}he {Z}ak transform and sampling theorems for wavelet subspaces.
\newblock {\em IEEE Trans. Signal Process.}, 41:3360--3364, 1993.

 \bibitem{janssen94}
A.~J. E.~M. {J}anssen.
\newblock {S}ignal analytic proofs of two basic results on lattice expansions.
\newblock {\em Appl. Comput. Harmon. Anal.}, 1(4):350--354, 1994.

\bibitem{janssen95}
A.~J. E.~M. {J}anssen.
\newblock {D}uality and biorthogonality for {W}eyl-{H}eisenberg frames.
\newblock {\em J. Fourier Anal. Appl.}, 1(4):403--436, 1995.

\bibitem{janssen96}
A.~J. E.~M. Janssen.
\newblock Some {W}eyl-{H}eisenberg frame bound calculations.
\newblock {\em Indag. Math.}, 7:165--182, 1996.

\bibitem{janssen03a}
A.~J. E.~M. Janssen.
\newblock On generating tight {G}abor frames at critical density.
\newblock {\em J. Fourier Anal. Appl.}, 9(2):175--214, 2003.

\bibitem{JS02}
A.~J. E.~M. Janssen and T.~Strohmer.
\newblock Hyperbolic secants yield {G}abor frames.
\newblock {\em Appl. Comp. Harm. Anal.}, 12(2):259--267, 2002.

\bibitem{JiaMicchelli1991}
R.~Q. Jia and C.~A. Micchelli.
\newblock Using the refinement equations for the construction of pre-wavelets II: powers of two.
\newblock In P.~J. Laurent , A.~Le~M\'ehaut\'e, L.~L. Schumaker (eds.), {\em Curves and Surfaces}, pages 209--246.
Academic Press, New York, 1991.


\bibitem{Karlin1968a}
S. Karlin, {\em Total Positivity}, Vol. I, Stanford University Press, Stanford,  1968.

\bibitem{kloos-stoeckler}
T.~Kloos and J.~St{\"o}ckler.
\newblock Zak transforms and {G}abor frames of totally positive functions and
  exponential {B}-splines.
\newblock {\em J. Approx. Theory}, 184:209--237, 2014.

\bibitem{koni15}
G.~Kozma and S.~Nitzan.
\newblock Combining {R}iesz bases.
\newblock {\em Invent. Math.}, 199(1):267--285, 2015.

\bibitem{landau67}
H.~J. Landau.
\newblock Necessary density conditions for sampling and interpolation of
  certain entire functions.
\newblock {\em Acta Math.}, 117:37--52, 1967.

\bibitem{lemvig}
J.~Lemvig.
\newblock On some {H}ermite series identities and their applications to {G}abor
  analysis.
\newblock {\em Monatsh. Math.}, 182:899--912, 2017.

\bibitem{lev12}
N.~Lev.
\newblock Riesz bases of exponentials on multiband spectra.
\newblock {\em Proc. Amer. Math. Soc.}, 140(9):3127--3132, 2012.

\bibitem{levin}
B.~Y. Levin.
\newblock {\em Lectures on entire functions}.
\newblock American Mathematical Society, Providence, RI, 1996.
\newblock In collaboration with and with a preface by Yu.\ Lyubarskii, M. Sodin
  and V. Tkachenko, Translated from the Russian manuscript by Tkachenko.

\bibitem{lyub92}
Y.~I. Lyubarski{\u\i}.
\newblock Frames in the {B}argmann space of entire functions.
\newblock In {\em Entire and subharmonic functions}, pages 167--180.
Adv. Soviet Math. 11. Amer.
  Math. Soc., Providence, RI, 1992.


\bibitem{NWW2005}
F.~J.~Narcowich, J.~D.~Ward, and H.~Wendland.
\newblock Sobolev bounds on functions with
scattered zeros, with applications to radial basis function surface fitting.
\newblock {\em Math. Comp.}, 74:743–-763, 2005.

\bibitem{orse02}
J.~Ortega-Cerd{\`a} and K.~Seip.
\newblock Fourier frames.
\newblock {\em Ann. of Math. (2)}, 155(3):789--806, 2002.

\bibitem{Ron2001}
A.~Ron.
\newblock Introduction to shift-invariant spaces. Linear independence.
\newblock In N. Dyn, D. Leviatan, D. Levin, A. Pinkus
{\em Multivariate Approximation and Applications}, pages 152--211.
Cambrigde University Press, Cambridge, UK, 2001.

\bibitem{ron-shen97}
A.~Ron and Z.~Shen.
\newblock Weyl--{H}eisenberg frames and {R}iesz bases in ${L}\sb 2({\bR } \sp d)$.
\newblock {\em Duke Math. J.}, 89(2):237--282, 1997.

\bibitem{Schoenb1947a}
I.~J. Schoenberg.
\newblock  On totally positive functions, Laplace integrals and entire functions
of the Laguerre-P\'olya-Schur type.
\newblock {\em Proc. Natl. Acad. Sci. USA}, 33: 11--17, 1947.

\bibitem{Schoenb1948}
Schoenberg, I. J.
\newblock On variation-diminishing integral operators of the convolution type.
\newblock {\em Proc. Nat. Acad. Sci. USA}, 34: 164--169, 1948.

\bibitem{Schoenb1951a}
I. J. Schoenberg.
\newblock On P\'olya frequency functions, I. The totally positive functions
and their Laplace transforms.
\newblock {\em J. Analyse Math.}, 1: 331--374, 1951.

\bibitem{SchoenbWhit1953a}
I. J. Schoenberg, A. Whitney.
\newblock On P\'olya frequence functions, III. The positivity of
translation determinants with an application to the
interpolation problem by spline curves.
\newblock {\em Trans. Amer. Math. Soc.}, 74: 246-259, 1953.

\bibitem{seip92}
K.~Seip.
\newblock Density theorems for sampling and interpolation in the
  {B}argmann-{F}ock space. {I}.
\newblock {\em J. Reine Angew. Math.}, 429:91--106, 1992.

\bibitem{seip03}
K.~Seip.
\newblock {\em Interpolation and sampling in spaces of analytic functions},
  volume~33 of {\em University Lecture Series}.
\newblock American Mathematical Society, Providence, RI, 2004.

\bibitem{sj95}
J.~{S}j{\"o}strand.
\newblock {W}iener type algebras of pseudodifferential operators.
\newblock {\em {S}{\'e}minaire sur les {\'e}quations aux {D}{\'e}riv{\'e}es
  {P}artielles, 1994-1995, \'Ecole {P}olytech, {P}alaiseau, {E}xp. {N}o.
  {I}{V}}, 21, 1995.


\bibitem{Sun07a}
Q.~Sun.
\newblock Wiener's lemma for infinite matrices.
\newblock {\em Trans. Amer. Math. Soc.}, 359(7):3099--3123 (electronic), 2007.

\bibitem{unser00}
M.~Unser.
\newblock Sampling - 50 Years after Shannon.
\newblock {\em Proc. IEEE}, 88 (4):569--587, 2000.

\bibitem{walter94}
G.~G. Walter.
\newblock {\em Wavelets and other orthogonal systems with applications}.
\newblock CRC Press, Boca Raton, FL, 1994.


\end{thebibliography}
\end{document}